\newtheorem{theorem}{Theorem}[section]
\newtheorem{lemma}[theorem]{Lemma}
\newtheorem{proposition}[theorem]{Proposition}
\newtheorem{corollary}[theorem]{Corollary}
\theoremstyle{definition}
\newtheorem{remark}[theorem]{Remark}
\newtheorem{problem}[theorem]{Problem}
\newcommand{\IR}{\mathbb{R}}
\newcommand{\IC}{\mathbb{C}}
\newcommand{\IN}{\mathbb{N}}
\newcommand{\IP}{\mathbb{P}}
\newcommand{\IQ}{\mathbb{Q}}
\newcommand{\II}{\mathbb{I}}
\newcommand{\cW}{\mathcal{W}}
\newcommand{\cH}{\mathcal{H}}
\newcommand{\cM}{\mathcal{M}}
\newcommand{\cN}{\mathcal{N}}
\newcommand{\cQ}{\mathcal{Q}}
\newcommand{\cR}{\mathcal{R}}
\newcommand{\cA}{\mathcal{A}}
\newcommand{\cB}{\mathcal{B}}
\newcommand{\cC}{\mathcal{C}}
\newcommand{\cL}{\mathcal{L}}
\newcommand{\cV}{\mathcal{V}}
\newcommand{\fg}{\mathfrak{g}}
\renewcommand{\L}{\mathrm{L}}
\newcommand{\C}{\mathrm{C}}
\renewcommand{\H}{\mathrm{H}}
\renewcommand{\S}{\mathrm{S}}
\newcommand{\W}{\mathrm{W}}
\newcommand{\T}{\mathrm{T}}
\newcommand{\fa}{\mathfrak{a}}
\newcommand{\fb}{\mathfrak{b}}
\newcommand{\e}{\mathrm{e}}
\newcommand{\ii}{\mathrm{i}}
\renewcommand{\d}{\mathrm{d}}
\newcommand{\eps}{\varepsilon}
\renewcommand\Re{\operatorname{Re}}
\newcommand{\Lop}{\mathcal{L}}
\newcommand{\divergence}{\operatorname{div}}
\DeclareMathOperator{\dist}{dist}
\DeclareMathOperator{\tr}{tr}
\DeclareMathOperator{\diam}{diam}
\DeclareMathOperator{\Id}{Id}
\DeclareMathOperator{\Rg}{\mathcal{R}}
\DeclareMathOperator{\dom}{\mathcal{D}}
\numberwithin{equation}{section}
\title[Remarks on the pressure and the Stokes problem in convex domains]{The Stokes resolvent problem: Optimal pressure estimates and remarks on resolvent estimates in convex domains}
\author{Patrick Tolksdorf}
\address{Institut f\"ur Mathematik, Johannes Gutenberg-Universit\"at Mainz, Staudingerweg 9, 55099 Mainz, Germany}
\email{tolksdorf@uni-mainz.de}
\date{\today}
\thanks{The author was supported by the  project  ANR INFAMIE (ANR-15-CE40-0011)}
\begin{document}
\begin{abstract}
The Stokes resolvent problem $\lambda u - \Delta u + \nabla \phi = f$ with $\divergence(u) = 0$ subject to homogeneous Dirichlet or homogeneous Neumann-type boundary conditions is investigated. In the first part of the paper we show that for Neumann-type boundary conditions the operator norm of $\L^2_{\sigma} (\Omega) \ni f \mapsto \pi \in \L^2 (\Omega)$ decays like $\lvert \lambda \rvert^{- 1 / 2}$ which agrees exactly with the scaling of the equation. In comparison to that, we show that the operator norm of this mapping under Dirichlet boundary conditions decays like $\lvert \lambda \rvert^{- \alpha}$ for $0 \leq \alpha < 1 / 4$ and we show that this decay rate cannot be improved to any exponent $\alpha > 1 / 4$, thereby, violating the natural scaling of the equation. In the second part of this article, we investigate the Stokes resolvent problem subject to homogeneous Neumann-type boundary conditions if the underlying domain $\Omega$ is convex. Invoking a famous result of Grisvard~\cite{Grisvard}, we show that weak solutions $u$ with right-hand side $f \in \L^2 (\Omega ; \IC^d)$ admit $\H^2$-regularity and further prove localized $\H^2$-estimates for the Stokes resolvent problem. We prove a generalized version of Shen's $\L^p$-extrapolation theorem~\cite{Shen-Riesz_transform} which can be seen as a version suitable for subspaces of $\L^p$ and combine this result with the localized $\H^2$-estimates to establish optimal resolvent estimates and gradient estimates in $\L^p (\Omega ; \IC^d)$ for $2d / (d + 2) < p < 2d / (d - 2)$ (with $1 < p < \infty$ if $d = 2$). This interval is larger than the known interval for resolvent estimates subject to Dirichlet boundary conditions~\cite{Shen} on general Lipschitz domains and is to the best knowledge of the author the first result that provides $\L^p$-estimates for the Stokes resolvent subject to Neumann-type boundary conditions on general convex domains.
\end{abstract}
\maketitle

\section{Introduction}

\noindent The main object under investigation is the Stokes resolvent problem in a bounded domain $\Omega \subset \IR^d$
\begin{align} \tag{Res} \label{Res}
 \left\{ \begin{aligned}
          \lambda u - \Delta u + \nabla \phi &= f && \text{in } \Omega \\
          \divergence(u) &= 0 && \text{in } \Omega.
         \end{aligned} \right.
\end{align}
The resolvent parameter $\lambda$ is supposed to be contained in a sector $\S_{\theta}$, $\theta \in [0 , \pi)$, in the complex plane, i.e., $\S_{\theta} := \{ z \in \IC \setminus \{ 0 \} : \lvert \arg(z) \rvert < \theta \}$ if $\theta \in (0 , \pi)$ and $\S_0 := (0 , \infty)$. In this article, this system is complemented with two different types of boundary conditions. There is the Dirichlet boundary condition
\begin{align} \tag{Dir} \label{Dir}
\begin{aligned}
 u &= 0 && \text{on } \partial \Omega
\end{aligned}
\end{align}
and there is a family of Neumann-type boundary conditions which reads
\begin{align} \tag{Neu} \label{Neu}
\begin{aligned}
 \{ D u + \mu [D u]^{\top} \} n - \phi n &= 0 && \text{on } \partial \Omega.
\end{aligned}
\end{align}
Here $\mu \in (-1 , 1]$ is a parameter, $n$ denotes the outward unit normal to $\Omega$, and $D u$ the Jacobi-matrix of $u$. There is a tremendous literature on these equations on different types of domains, see, e.g.,~\cite{Abels, Borchers_Sohr, Bolkart_Giga_Miura_Suzuki_Tsutsui, Farwig_Sohr, Geissert_Hess_Hieber_Schwarz_Stavrakidis, Giga, Mitrea_Monniaux, Mitrea_Monniaux_Wright, Shen, Tolksdorf, Tolksdorf_Watanabe} to mention only a few. Notice that the Neumann-type boundary condition with $\mu = 1$ plays an eminent role in the study of problems involving a free boundary~\cite{Abels_free, Beale, Grubb_Solonnikov, Saito, Solonnikov_free} and that the condition with $\mu = 0$ is central in the study of inhomogeneous boundary value problems involving the Stokes equations~\cite{Fabes_Kenig_Verchota, Mitrea_Wright, Shen}. In this article, we investigate two different questions: 
\subsection*{Question 1:}
The first question deals with the behavior of the operator norm of the mapping $f \mapsto \phi$ with respect to $\lambda$, i.e., we seek an inequality of the form
\begin{align*}
 \| \phi \|_{\L^2 (\Omega)} \leq C (\lambda) \| f \|_{\L^2 (\Omega ; \IC^d)} \qquad (f \in \L^2_{\sigma} (\Omega))
\end{align*}
and we would like to know what the \textit{exact} behavior of the constant $C(\lambda)$ is with respect to $\lambda$. Notice that in the case of homogeneous Dirichlet boundary conditions the pressure $\phi$ is unique up to an additive constant so that we assume its mean value to be zero. Notice further that the space of solenoidal $\L^2$-integrable functions differs depending on whether~\eqref{Res} is considered with condition~\eqref{Dir} or~\eqref{Neu}, cf.\@ Section~\ref{Sec: The Stokes operator}. However, during this introduction, we will only use one notation having in mind the difference of the two spaces. \par
Pressure estimates of the Stokes resolvent problem are studied in the engineering literature~\cite{Luhar_Sharma_McKeon} and they also appear in the study of the Stokes operator~\cite{Geissert_Heck_Hieber, Geissert_Hess_Hieber_Schwarz_Stavrakidis, Noll_Saal, Tolksdorf_Watanabe}. Another interesting application can be found in the analysis of the discrete Stokes resolvent problem, comparable to the Poisson case in \cite{Leykekhman_Vexler, Thomee}, where the pressure appears in the derivation of weighted norm estimates. \par
To obtain an idea of what the right behavior of $C(\lambda)$ with respect to $\lambda$ would be, set for a moment $\Omega = \IR^d$. In this case, the solutions $u$ and $\phi$ satisfy the following scaling property: Let $r > 0$ and assume that $u$ and $\phi$ solve~\eqref{Res} for some resolvent parameter $\lambda$ and right-hand side $f$. Then, $u_r := u (r \cdot)$ and $\phi_r := r \phi (r \cdot)$ solve~\eqref{Res} for the resolvent parameter $r^2 \lambda$ and right-hand side $f_r := r^2 f(r \cdot)$. Put $r := \lvert \lambda \rvert^{- 1 / 2}$ so that $\lvert r^2 \lambda \rvert = 1$. If there \textit{would} be a constant $C > 0$ (which on the whole space certainly does not have to be true) such that
\begin{align*}
 \| \phi_r \|_{\L^2 (\IR^d)} \leq C \| f_r \|_{\L^2 (\IR^d ; \IC^d)}
\end{align*}
holds, then the substitution rule ensures the estimate
\begin{align}
\label{Eq: Scaling estimate}
\| \phi \|_{\L^2 (\IR^d)} \leq C \lvert \lambda \rvert^{- 1 / 2} \| f \|_{\L^2 (\IR^d ; \IC^d)}.
\end{align}
We will show in Section~\ref{On uniform pressure estimates} that this behavior of $C(\lambda)$ is \textit{false} on bounded $\C^4$-domains and if homogeneous Dirichlet boundary conditions are imposed. More precisely, it is known~\cite{Noll_Saal, Tolksdorf_Watanabe} that $C(\lambda)$ satisfies for each $0 \leq \alpha < 1 / 4$ and some constant $C > 0$ independent of $\lambda$
\begin{align}
\label{Eq: C(lambda) inequality}
 C(\lambda) \leq C \lvert \lambda \rvert^{- \alpha},
\end{align}
see also Proposition~\ref{Prop: Pressure estimate Dirichlet}. In Proposition~\ref{Prop: Sharpness on L2} we show that the condition $\alpha < 1 / 4$ is sharp in the sense that for no $\alpha > 1 / 4$ there exists a constant $C > 0$ independent of $\lambda$ such that~\eqref{Eq: C(lambda) inequality} is valid. This shows, that the presence of a boundary causes the pressure to behave differently than its natural scaling would dictate. \par
In contrast to that, under boundary condition~\eqref{Neu}, then on each domain $\Omega$ with a sufficiently nice boundary, e.g., bounded $\C^{1 , 1}$-domains or bounded convex domains, we show that $C(\lambda)$ satisfies~\eqref{Eq: C(lambda) inequality} with $\alpha = 1 / 2$, see Proposition~\ref{Prop: Pressure estimate Neumann}. Thus, depending on the particular boundary condition at stake, the behavior of the pressure with respect to $\lambda$ might differ. \par
For both boundary conditions, we perform a similar analysis in which the $\L^2$-norm of $f$ on the right-hand side is replaced by the $\H^{-1}$-norm of $f$, see Propositions~\ref{Prop: Pressure estimate in H-1} and~\ref{Prop: Failure of pressure estimate}. For simplicity, we considered only $\L^2$-based spaces. An extension to the $\L^p$-situation should be straightforward. Notice that the exponent $\alpha$ for which the pressure estimates in $\L^p$ are valid satisfies the relation $\alpha < 1 / 2 - 1 / (2 p)$, see~\cite{Noll_Saal}, so that the decay estimate with exponent $\alpha > 1 / 2 - 1 / (2 p)$ should fail.

\subsection*{Question 2:}
If $\Omega \subset \IR^d$, $d \geq 3$, is a bounded Lipschitz domain the resolvent estimate 
\begin{align}
\label{Eq: Resolvent estimate}
 \lvert \lambda \rvert \| u \|_{\L^p (\Omega ; \IC^d)} \leq C \| f \|_{\L^p (\Omega ; \IC^d)} \qquad (f \in \L^p_{\sigma} (\Omega))
\end{align}
was proven for solutions to~\eqref{Res} subject to the boundary condition~\eqref{Dir} in the seminal paper of Shen~\cite{Shen}. Here, $p$ satisfies
\begin{align}
\label{Eq: p condition Lipschitz}
 \Big\lvert \frac{1}{p} - \frac{1}{2} \Big\rvert < \frac{1}{2 d} + \eps
\end{align}
for some $\eps > 0$ depending only on $d$, $\theta$, and the Lipschitz geometry. A special class of bounded Lipschitz domains are bounded convex domains and one might wonder, whether the condition~\eqref{Eq: p condition Lipschitz} on $p$ improves if convexity of $\Omega$ is imposed. It was for example proven by Geng and Shen~\cite{Geng_Shen} that on bounded and convex domains the Helmholtz projection gives rise to a bounded projection on $\L^p (\Omega ; \IC^d)$ for all $1 < p < \infty$. Moreover, a work of Geissert, Heck, Hieber, and Sawada~\cite{Geissert_Heck_Hieber_Sawada} formalizes the philosophy that the boundedness of the Helmholtz projection implies functional analytic properties of the Stokes operator like~\eqref{Eq: Resolvent estimate} at least under the condition that $\Omega$ is a (not necessarily bounded) uniform $\C^3$-domain. Combining the result of~\cite{Geng_Shen} with this philosophy leads to the conjecture that the resolvent estimate~\eqref{Eq: Resolvent estimate} should be valid \textit{for all} $1 < p < \infty$ if $\Omega$ is convex. This is a question that was raised by Maz'ya in~\cite[Prob.~66]{Mazya}. \par
We give first results in this direction for the Stokes resolvent problem~\eqref{Res} subject to the Neumann-type boundary condition~\eqref{Neu} but we restrict the interval of parameters $\mu$ to be $(-1 , \sqrt{2} - 1)$. This still includes the case $\mu = 0$ but unfortunately excludes the physically important case $\mu = 1$. The corresponding results are explained as follows. \par 
By virtue of a famous formula of integration by parts by Grisvard~\cite[Thm.~3.1.1.1]{Grisvard} we establish the estimate
\begin{align}
\label{Eq: H^2 resolvent estimate}
 \lvert \lambda \rvert \int_{\Omega} \lvert \nabla u \rvert^2 \; \d x + \int_{\Omega} \lvert \nabla^2 u \rvert^2 \; \d x + \int_{\Omega} \lvert \nabla \phi \rvert^2 \; \d x \leq C \bigg( \int_{\Omega} \lvert f \rvert^2 \; \d x + \lvert \lambda \rvert^2 \int_{\Omega} \lvert u \rvert^2 \; \d x \bigg)
\end{align}
for some constant $C > 0$ depending only on $d$, $\theta$, and $\mu$, see Theorem~\ref{Thm: H2 regularity on convex domains}. In particular, this implies that solutions $u$ and $\phi$ to $- \Delta u + \nabla \phi = f$ and $\divergence(u) = 0$ for some $f \in \L^2_{\sigma} (\Omega)$ and subject to the boundary condition~\eqref{Neu} satisfy $u \in \H^2 (\Omega ; \IC^d)$ and $\phi \in \H^1 (\Omega)$. This should be compared with the results of Kellogg and Osborn~\cite{Kellogg_Osborn}, Dauge~\cite{Dauge}, and Maz'ya and Rossmann~\cite{Mazya_Rossmann} in the case of the boundary condition~\eqref{Dir} and convex polygonal/polyhedral domains. For general bounded and convex domains this higher regularity property in the case of homogeneous Dirichlet boundary conditions is unknown. \par
We continue by establishing a localized version of~\eqref{Eq: H^2 resolvent estimate} which can be found in Proposition~\ref{Prop: Reverse regularity estimates}. Combining this with a Caccioppoli type estimate, see Lemma~\ref{Lem: Caccioppoli}, and Sobolev's embedding yields the validity of a weak reverse H\"older estimate of the form
\begin{align}
\label{Eq: Reverse Holder introduction}
 \bigg( \frac{1}{r^d} \int_{\Omega \cap Q (x_0 , r)} \big\{ \lvert \lambda \rvert \lvert u \rvert &+ \lvert \lambda \rvert^{1 / 2} \lvert \nabla u \rvert + \lvert \lambda \rvert^{1 / 2} \lvert \phi \rvert \big\}^p \; \d x \bigg)^{1 / p} \\
 &\leq C \bigg( \frac{1}{r^d} \int_{\Omega \cap Q (x_0 , 2 r)} \big\{ \lvert \lambda \rvert \lvert u \rvert + \lvert \lambda \rvert^{1 / 2} \lvert \nabla u \rvert + \lvert \lambda \rvert^{1 / 2} \lvert \phi \rvert \big\}^2 \; \d x \bigg)^{1 / 2},
\end{align}
where $Q (x_0 , r)$ is a cube in $\IR^d$ with midpoint $x_0$ and diameter $r > 0$, where $p$ satisfies $2 < p < \infty$ if $d = 2$ and $p = 2d / (d - 2)$ if $d \geq 3$, and where $u$ and $\phi$ solve the Stokes resolvent problem with a right-hand side $f \in \L^2 (\Omega ; \IC^d)$ that vanishes on $\Omega \cap Q (x_0 , 2 r)$. One could now conclude by an $\L^p$-extrapolation theorem of Shen~\cite{Shen-Riesz_transform} that the family of (sublinear) operators
\begin{align*}
 \L^q (\Omega ; \IC^d) \ni f \mapsto \lvert \lambda \rvert \lvert u \rvert &+ \lvert \lambda \rvert^{1 / 2} \lvert \nabla u \rvert + \lvert \lambda \rvert^{1 / 2} \lvert \phi \rvert
\end{align*}
is uniformly bounded with respect to $\lambda$ on $\L^q$, where $2 < q < 2d / (d - 2)$ if(!) the family of operators
\begin{align}
\label{Eq: Wrong uniform boundedness}
 T_{\lambda} : \L^2 (\Omega ; \IC^d) \to \L^2 (\Omega), \quad f \mapsto \lvert \lambda \rvert^{1 / 2} \phi
\end{align}
is uniformly bounded on $\L^2$. This gives a connection to Question~1 discussed above. Unfortunately, only the restriction of $T_{\lambda}$ to solenoidal vector fields satisfies this uniform bound, whereas the operators on all of $\L^2 (\Omega ; \IC^d)$ grow like $\lvert \lambda \rvert^{1 / 2}$. This fact can be easily seen by noting that the pressure $\phi$ solving~\eqref{Res} for general $f \in \L^2 (\Omega ; \IC^d)$ is the sum of the pressure associated to~\eqref{Res} but with the right-hand side $\IQ f \in \L^2_{\sigma} (\Omega)$ and the function $g$ which satisfies $(\IQ - \Id) f = \nabla g$. Here, $\IQ$ denotes the Helmholtz projection on $\L^2 (\Omega ; \IC^d)$. Notice that the function $g$ does not depend on $\lambda$ at all, which explains that the family defined in~\eqref{Eq: Wrong uniform boundedness} cannot be uniformly bounded on $\L^2$. \par
To circumvent this problem, we discuss in Section~\ref{An Lp-extrapolation theorem suitable for subspaces of Lp} a version of Shen's $\L^p$-extrapolation theorem, which is valid for \textit{subspaces} of $\L^p$. This allows us to employ the uniform boundedness of the restriction of the operators $T_{\lambda}$ to solenoidal spaces and delivers the following theorem which is proven in Section~\ref{Sec: Resolvent estimates on convex domains}.

\begin{theorem}
\label{Thm: Main}
Let $\Omega \subset \IR^d$, $d \geq 2$, be a bounded and convex domain and $r_0 > 0$ be such that $B(0 , r_0) \subset \tfrac{1}{2} [\Omega - \{x_0\}]$ for some $x_0 \in \Omega$. Let further $\theta \in [0 , \pi)$, $\mu \in (-1 , \sqrt{2} - 1)$, and let
\begin{align*}
 \Big\lvert \frac{1}{p} - \frac{1}{2} \Big\rvert < \frac{1}{d}.
\end{align*}
Then there exists a constant $C > 0$ such that for all $\lambda \in \S_{\theta}$ and all $f \in \L^2 (\Omega ; \IC^d) \cap \L^p (\Omega ; \IC^d)$ satisfying $\divergence(f) = 0$ in the sense of distributions the solutions $u \in \H^1 (\Omega ; \IC^d)$ and $\phi \in \L^2 (\Omega)$ to
\begin{align*}
 \left\{ \begin{aligned}
  \lambda u - \Delta u + \nabla \phi &= f && \text{in } \Omega \\
  \divergence(u) &= 0 && \text{in } \Omega \\
  \{ D u + \mu [D u]^{\top} \} n - \phi n &= 0 && \text{on } \partial \Omega
 \end{aligned} \right.
\end{align*}
satisfy
\begin{align*}
 \lvert \lambda \rvert \| u \|_{\L^p (\Omega ; \IC^d)} + \lvert \lambda \rvert^{1 / 2} \| \nabla u \|_{\L^p (\Omega ; \IC^{d^2})} \leq C \| f \|_{\L^p (\Omega ; \IC^d)}.
\end{align*}
If $p \geq 2$ it additionally holds
\begin{align*}
 \lvert \lambda \rvert^{1 / 2} \| \phi \|_{\L^p (\Omega)} \leq C \| f \|_{\L^p (\Omega ; \IC^d)}.
\end{align*}
The constant $C > 0$ depends only on $d$, $\theta$, $\mu$, $\diam (\Omega)$, and $r_0$. \par
Furthermore, there exists a constant $C > 0$ such that for all $\lambda \in \S_{\theta}$ and all $F \in \L^2 (\Omega ; \IC^{d \times d}) \cap \L^p (\Omega ; \IC^{d \times d})$ the solutions $u \in \H^1 (\Omega ; \IC^d)$ and $\phi \in \L^2 (\Omega)$ to
\begin{align*}
 \left\{ \begin{aligned}
  \lambda u - \Delta u + \nabla \phi &= \divergence(F) && \text{in } \Omega \\
  \divergence(u) &= 0 && \text{in } \Omega \\
  \{ D u + \mu [D u]^{\top} \} n - \phi n &= 0 && \text{on } \partial \Omega
 \end{aligned} \right.
\end{align*}
satisfy
\begin{align*}
 \lvert \lambda \rvert^{1 / 2} \| u \|_{\L^p (\Omega ; \IC^d)} + \| \nabla u \|_{\L^p (\Omega ; \IC^{d^2})} + \| \phi \|_{\L^p (\Omega)} \leq C \| F \|_{\L^p (\Omega ; \IC^{d \times d})}.
\end{align*}
If $p \geq 2$ it additionally holds
\begin{align*}
 \| \phi \|_{\L^p (\Omega)} \leq C \| F \|_{\L^p (\Omega ; \IC^{d \times d})}.
\end{align*}
Again, the constant $C > 0$ depends only on $d$, $\theta$, $\mu$, $\diam (\Omega)$, and $r_0$.
\end{theorem}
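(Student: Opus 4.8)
\emph{Overview of the argument.} The plan is to establish the estimates first for $2 \le p < 2d/(d-2)$ by combining a weak reverse Hölder inequality with the version of Shen's $\L^p$-extrapolation theorem adapted to subspaces, and then to reach the range $2d/(d+2) < p < 2$ by duality. I would begin by recording the $\L^2$-endpoints. For solenoidal $f \in \L^2(\Omega;\IC^d)$ the $\L^2$-theory of the Stokes operator subject to~\eqref{Neu} (Section~\ref{Sec: The Stokes operator}) gives $\lvert\lambda\rvert\|u\|_{\L^2} + \lvert\lambda\rvert^{1/2}\|\nabla u\|_{\L^2} \le C\|f\|_{\L^2}$, while Proposition~\ref{Prop: Pressure estimate Neumann} gives the matching pressure bound $\lvert\lambda\rvert^{1/2}\|\phi\|_{\L^2} \le C\|f\|_{\L^2}$. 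Hence the sublinear operator
\[
  S_{\lambda}f := \lvert\lambda\rvert\,\lvert u\rvert + \lvert\lambda\rvert^{1/2}\,\lvert\nabla u\rvert + \lvert\lambda\rvert^{1/2}\,\lvert\phi\rvert
\]
is bounded, uniformly in $\lambda \in \S_{\theta}$, from the closed subspace $\L^2_{\sigma}(\Omega) \subset \L^2(\Omega;\IC^d)$ into $\L^2(\Omega)$. The restriction to solenoidal fields is unavoidable: by the discussion following~\eqref{Eq: Wrong uniform boundedness}, on all of $\L^2(\Omega;\IC^d)$ the pressure contributes a $\lambda$-independent gradient and $S_{\lambda}$ then grows like $\lvert\lambda\rvert^{1/2}$. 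In the same way, for data of the form $\divergence F$ with $F \in \L^2(\Omega;\IC^{d\times d})$, the energy identity (testing with the velocity and using~\eqref{Neu} together with $\divergence u = 0$ to absorb the boundary and pressure terms) and an $\H^{-1}$-type pressure bound (Proposition~\ref{Prop: Pressure estimate in H-1}) show that $\widetilde{S}_{\lambda}F := \lvert\lambda\rvert^{1/2}\lvert u\rvert + \lvert\nabla u\rvert + \lvert\phi\rvert$ is bounded on $\L^2(\Omega;\IC^{d\times d})$ uniformly in $\lambda$.

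\emph{The range $p \ge 2$.} Next I would invoke the weak reverse Hölder inequality~\eqref{Eq: Reverse Holder introduction}, which holds --- by Proposition~\ref{Prop: Reverse regularity estimates}, Lemma~\ref{Lem: Caccioppoli} and Sobolev's embedding --- with exponent $p_0 := 2d/(d-2)$ if $d \ge 3$ and any finite $p_0$ if $d = 2$, for solutions whose right-hand side vanishes on $\Omega \cap Q(x_0,2r)$, together with its analogue for $\widetilde{S}_{\lambda}$. The geometric hypothesis $B(0,r_0) \subset \tfrac12[\Omega-\{x_0\}]$ and the dependence on $\diam(\Omega)$ enter here to treat cubes of size comparable to $\diam(\Omega)$, where one falls back on the global $\L^2$-bound just established. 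Feeding these inputs into the $\L^p$-extrapolation theorem for subspaces of $\L^p$ of Section~\ref{An Lp-extrapolation theorem suitable for subspaces of Lp}, applied with the closed subspace $\L^p_{\sigma}(\Omega) \subset \L^p(\Omega;\IC^d)$ for $S_{\lambda}$ and with all of $\L^p(\Omega;\IC^{d\times d})$ for $\widetilde{S}_{\lambda}$, yields the uniform bounds $\|S_{\lambda}f\|_{\L^p} \le C\|f\|_{\L^p}$ and $\|\widetilde{S}_{\lambda}F\|_{\L^p} \le C\|F\|_{\L^p}$ for all $2 \le p < 2d/(d-2)$. This proves the theorem in the range $p \ge 2$, the pressure estimates included, since the pressure terms sit inside $S_{\lambda}$ and inside $\widetilde{S}_{\lambda}$.

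\emph{The range $p < 2$.} Let $2d/(d+2) < p < 2$, so that $2 < p' < 2d/(d-2)$. A direct computation shows that the sesquilinear form $a_{\mu}(u,v) = \int_{\Omega}\{Du + \mu[Du]^{\top}\}:\overline{Dv}\;\d x$ defining the Stokes operator $A_{\mu}$ subject to~\eqref{Neu} is Hermitian for every real $\mu$, so $A_{\mu}$ is self-adjoint on $\L^2_{\sigma}(\Omega)$; consequently the $\L^2$-adjoint of $(\lambda+A_{\mu})^{-1}$ is $(\bar\lambda+A_{\mu})^{-1}$ pre-composed with the Helmholtz projection $\IQ$, which is bounded on $\L^{p'}(\Omega;\IC^d)$ for convex $\Omega$ by Geng and Shen~\cite{Geng_Shen}. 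For solenoidal $f \in \L^2 \cap \L^p$ and $g \in \L^2 \cap \L^{p'}(\Omega;\IC^d)$ the weak formulations give $\langle u,g\rangle = \langle u,\IQ g\rangle = \langle f,(\bar\lambda+A_{\mu})^{-1}\IQ g\rangle$; applying the $p' \ge 2$ velocity bound of the previous step to the solenoidal datum $\IQ g$ and taking the supremum over $g$ yields $\lvert\lambda\rvert\|u\|_{\L^p} \le C\|f\|_{\L^p}$. Pairing instead against $G \in \L^2 \cap \L^{p'}(\Omega;\IC^{d\times d})$ and using the weak formulations for $u$ and for the solution $v$ of the $\divergence(\bar G)$-problem at $\bar\lambda$ (where no pressure term appears, all test functions being solenoidal) identifies $\int_{\Omega}Du:\overline{G}\;\d x$ with $-\langle f,v\rangle$ and gives $\lvert\lambda\rvert^{1/2}\|\nabla u\|_{\L^p} \le C\|f\|_{\L^p}$ from the $p' \ge 2$ velocity bound for the $\divergence F$-problem. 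The estimates for the $\divergence F$-problem at $p < 2$ --- velocity, gradient, and pressure --- follow symmetrically by duality against the estimates of the previous step. Together with the range $p \ge 2$ this establishes Theorem~\ref{Thm: Main}.

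\emph{The main obstacle.} The real difficulty is precisely the phenomenon already visible at the $\L^2$-endpoint: the map $f \mapsto \lvert\lambda\rvert^{1/2}\phi$ is \emph{not} uniformly bounded on $\L^2(\Omega;\IC^d)$ but only on its solenoidal subspace, the discrepancy being a fixed gradient. One is therefore forced to run the Calderón--Zygmund / good-$\lambda$ stopping-time argument behind the reverse-Hölder extrapolation entirely \emph{within} the closed subspace $\L^p_{\sigma}(\Omega)$, arranging in particular that the local reverse-Hölder hypothesis is only ever tested on solutions arising from solenoidal data; making this work is the content of the subspace extrapolation theorem of Section~\ref{An Lp-extrapolation theorem suitable for subspaces of Lp} and is the heart of the argument. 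The remaining points are routine: the density of $\L^2 \cap \L^p$ data used to run the extrapolation, the bookkeeping of the $\lambda$-dependence (equivalently, a rescaling to $\lvert\lambda\rvert = 1$), the small-mean-value reductions needed to dualize the pressure, and the verification that $\IQ$ sends the chosen test functions into the spaces on which the $p' \ge 2$ estimates apply.
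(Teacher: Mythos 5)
Your overall strategy coincides with the paper's: $\L^2$-bounds for velocity, gradient and pressure, a weak reverse H\"older inequality fed into the subspace version of Shen's extrapolation theorem for $2<p<2d/(d-2)$, and duality for $2d/(d+2)<p<2$, with the pressure only controlled for $p\geq 2$. Two concrete points in your write-up need repair, however. First, you apply the localized $\H^2$-estimate (Proposition~\ref{Prop: Reverse regularity estimates}) and the Caccioppoli inequality directly on $\Omega$, but these local estimates --- resting on Grisvard's identity for piecewise $\C^2$-domains and on solutions that are smooth up to the boundary (Remark~\ref{Rem: Higher regularity}) --- are only available on \emph{smooth} convex domains. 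The paper therefore verifies the good-$\lambda$ hypothesis on the exhausting sequence of smooth convex domains $\Omega_k$ of Remark~\ref{Rem: Approximating sequence} (this is also where the uniform $d$-set property~\eqref{Eq: d-set property}, hence the dependence on $r_0$ and $\diam(\Omega)$, enters) and then passes to the limit via the convergence argument from the proof of Theorem~\ref{Thm: H2 regularity on convex domains}. Without this approximation step your verification of the reverse H\"older inequality is not justified as stated.

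Second, your duality argument for $p<2$ hinges on the $\L^{p'}$-boundedness of the projection $\IQ$ onto $\cL^{p'}_{\sigma}(\Omega)$, which you attribute to Geng and Shen. Their theorem concerns the Helmholtz projection $\IP$ onto $\L^{p}_{\sigma}(\Omega)$, realized via the \emph{Neumann} Laplacian; the projection relevant here is $\IQ=\Id+\nabla(-\Delta_D)^{-1}\divergence$, built from the \emph{Dirichlet} Laplacian (see~\eqref{Eq: Representation Helmholtz}), and its $\L^{p'}$-boundedness on convex domains for $p'>2$ is not provided by the paper --- indeed the concluding remark after the proof points to Fromm's counterexample precisely to explain why Dirichlet-Laplacian regularity in $\L^{p'}$ is the obstruction for the pressure when $p<2$. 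The paper sidesteps this by first running the extrapolation for the operators $S_{\lambda}^{1},S_{\lambda}^{2}$ associated with right-hand sides $\divergence(F)$, $F\in\L^{p'}(\Omega;\IC^{d\times d})$ arbitrary (so no projection is needed), and then dualizing against those. A smaller slip of the same kind: the $\L^2$-pressure bound for the $\divergence(F)$-problem with Neumann-type conditions is~\eqref{Eq: Neumann pressure negative} in Proposition~\ref{Prop: Pressure estimate Neumann}, not Proposition~\ref{Prop: Pressure estimate in H-1}, which concerns Dirichlet conditions.
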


\subsection*{Acknowledgements}
The author likes to thank Niklas Behringer for initiating a discussion on the sharpness of the pressure decay estimates presented in Section~\ref{On uniform pressure estimates} and pointing out the references~\cite{Leykekhman_Vexler, Thomee}. Moreover, he likes to thank Zhongwei Shen for interesting discussions on the Stokes equations in convex domains.

\section{The Stokes operator on $\L^2_{\sigma} (\Omega)$ and $\H^{-1}_{\sigma} (\Omega)$}
\label{Sec: The Stokes operator}

\noindent In the following, we will assume that $\Omega \subset \IR^d$, $d \geq 2$, is a bounded and open domain whose boundary is at least Lipschitz regular, i.e., locally represented as the graph of a Lipschitz continuous function. This section is devoted to present results concerning the Stokes resolvent problem~\eqref{Res} subject to no-slip boundary conditions~\eqref{Dir} and subject to Neumann-type boundary conditions~\eqref{Neu}.

\subsection{Function spaces}

We define the space of compactly supported smooth and solenoidal vector fields in $\Omega$ as 
\begin{align*}
 \C_{c , \sigma}^{\infty} (\Omega) := \{ \varphi \in \C_c^{\infty} (\Omega ; \IC^d) : \divergence(\varphi) = 0  \}
\end{align*}
and the space of solenoidal vector fields that are smooth up to the boundary as
\begin{align*}
 \C_{\sigma}^{\infty} (\overline{\Omega}) := \{ \varphi|_{\Omega} : \varphi \in \C_{c , \sigma}^{\infty} (\IR^d) \}.
\end{align*}
As usual, we define for $1 < p < \infty$
\begin{align*}
 \L^p_{\sigma} (\Omega) := \overline{\C_{c , \sigma}^{\infty} (\Omega)}^{\L^p} \qquad \text{and} \qquad \W^{1 , p}_{0 , \sigma} (\Omega) := \overline{\C_{c , \sigma}^{\infty} (\Omega)}^{\W^{1 , p}}
\end{align*}
endowed with their natural norms. These spaces are usually introduced if the Stokes equations subject to no-slip boundary conditions are studied, e.g., see~\cite{Sohr, Galdi, Mitrea_Monniaux}. If one is interested in Neumann-type boundary conditions, then one defines the spaces
\begin{align*}
 \cL^p_{\sigma} (\Omega) := \{ u \in \L^p (\Omega ; \IC^d) : \divergence(u) = 0 \} \qquad \text{and} \qquad \cW^{1 , p}_{\sigma} (\Omega) := \{ u \in \W^{1 , p} (\Omega ; \IC^d) : \divergence(u) = 0 \}
\end{align*}
endowed with their natural norms, see, e.g.,~\cite{Solonnikov, Mitrea_Monniaux_Wright}. If $p = 2$ we write $\H^1_{0 , \sigma} (\Omega) := \W^{1 , 2}_{0 , \sigma} (\Omega)$ and $\cH^1_{\sigma} (\Omega) := \cW^{1 , 2}_{\sigma} (\Omega)$ henceforth. Notice that $\L^2_{\sigma} (\Omega)$ and $\cL^2_{\sigma} (\Omega)$ do in general not coincide. Indeed, elements $u \in \L^2_{\sigma} (\Omega)$ satisfy $n \cdot u = 0$ on $\partial \Omega$ whereas the mean value of $n \cdot u$ on $\partial \Omega$ vanishes for elements $u$ in $\cL^2_{\sigma} (\Omega)$. Furthermore, notice that $\C^{\infty}_{\sigma} (\overline{\Omega})$ embeds densely into $\cL^2_{\sigma} (\Omega)$, by~\cite[Lem.~2.1, Rem.~2.2]{Mitrea_Monniaux_Wright}. We define the antidual spaces
\begin{align*}
 \H^{-1}_{\sigma} (\Omega) := \H^1_{0 , \sigma} (\Omega)^* \qquad \text{and} \qquad \cH^{-1}_{0 , \sigma} (\Omega) := \cH^1_{\sigma} (\Omega)^*,
\end{align*}
where we consider antilinear functionals instead of linear functionals, i.e., they satisfy $f(\alpha \cdot) = \overline{\alpha} f(\cdot)$ for $\alpha \in \IC$ instead of the usual homogeneity condition. We further define $\H^{-1} (\Omega ; \IC^d) := \H^1_0 (\Omega ; \IC^d)^*$ and $\H^{-1}_0 (\Omega ; \IC^d) := \H^1 (\Omega ; \IC^d)^*$. Notice that the embeddings
\begin{align*}
 \H^1_{0 , \sigma} (\Omega) \hookrightarrow \H^1_0 (\Omega ; \IC^d) \qquad \text{and} \qquad \cH^1_{\sigma} (\Omega) \hookrightarrow \H^1 (\Omega ; \IC^d)
\end{align*}
result in the following embeddings for their dual spaces
\begin{align*}
 \H^{-1} (\Omega ; \IC^d) \hookrightarrow \H^{-1}_{\sigma} (\Omega) \qquad \text{and} \qquad \H^{-1}_0 (\Omega ; \IC^d) \hookrightarrow \cH^{-1}_{0 , \sigma} (\Omega).
\end{align*}
Notice further, that an element $u \in \L^2 (\Omega ; \IC^d)$ can be considered as an element in the spaces of negative order by identifying $u$ with the functional
\begin{align*}
 \Phi (u) (v) := \int_{\Omega} u \cdot \overline{v} \; \d x
\end{align*}
endowed with the respective domain of definition. \par
For $0 < s < 1$ we consider as intermediate spaces the scale of $\L^2$-based Bessel potential spaces $\H^s (\Omega) = \H^{s , 2} (\Omega)$ which are defined as the restriction spaces of Bessel potential spaces on the whole space. The solenoidal counterparts are denoted by $\H^s_{\sigma} (\Omega)$ and are defined to be $\H^s (\Omega ; \IC^d) \cap \L^2_{\sigma} (\Omega)$. If $s > 1 / 2$ we also define the corresponding spaces with vanishing trace, i.e., $\H^s_{0 , \sigma} (\Omega) := \H^s_0 (\Omega ; \IC^d) \cap \L^2_{\sigma} (\Omega)$. In the case of negative indices, we define for $0 < s < 1 / 2$ the space $\H^{- s}_{\sigma} (\Omega) := \H^s_{\sigma} (\Omega)^*$. \par
Having introduced all required function spaces, we are going to introduce the Stokes operators subject to no-slip and Neumann boundary conditions following~\cite{Mitrea_Monniaux, Mitrea_Monniaux_Wright}.

\subsection{The Stokes operator subject to no-slip boundary conditions}

Define the sesquilinear form
\begin{align*}
 \fa : \H^1_{0 , \sigma} (\Omega) \times \H^1_{0 , \sigma} (\Omega) \to \IC, \quad (u , v) \mapsto \int_{\Omega} \nabla u \cdot \overline{\nabla v} \; \d x.
\end{align*}
The weak Stokes operator $\cA$ on $\H^{-1}_{\sigma} (\Omega)$ subject to no-slip boundary conditions is defined as
\begin{align*}
 \dom(\cA) &:= \H^1_{0 , \sigma} (\Omega),\\
 \langle \cA u , v \rangle_{\H^{-1}_{\sigma} , \H^1_{0 , \sigma}} &:= \fa (u , v) \quad \text{for} \quad u \in \dom(\cA) \text{ and } v \in \H^1_{0 , \sigma} (\Omega).
\end{align*}
The Stokes operator $A$ on $\L^2_{\sigma} (\Omega)$ subject to no-slip boundary conditions is then defined as the part of $\cA$ in $\L^2_{\sigma} (\Omega)$, i.e.,
\begin{align*}
 \dom(A) &:= \{ u \in \L^2_{\sigma} (\Omega) : u \in \dom(\cA) \text{ and } \cA u \in \L^2_{\sigma} (\Omega) \}, \\
 A u &:= \cA u \qquad (u \in \dom(A)).
\end{align*}
Elements $u \in \dom(A)$ satisfy no-slip boundary conditions. Notice that the symmetry of $\fa$ implies that $A$ is a self-adjoint operator on $\L^2_{\sigma} (\Omega)$, see~\cite[Thm.~VI.2.23]{Kato}. Furthermore, the definition of $A$ implies for the resolvent sets the inclusion $\rho(\cA) \subset \rho(A)$ and that for $\lambda \in \rho(- \cA)$ it holds
\begin{align}
\label{Eq: Resolvent restriction of weak resolvent}
 (\lambda + A)^{-1} = (\lambda + \cA)^{-1}|_{\L^2_{\sigma} (\Omega)}.
\end{align}

\subsection{The Stokes operator subject to Neumann-type boundary conditions}

Define for $\mu \in (-1 , 1]$ the coefficients $a_{j k}^{\alpha \beta} (\mu) := \delta_{j k} \delta_{\alpha \beta} + \mu \delta_{j \beta} \delta_{k \alpha}$, where $\delta_{\alpha \beta}$ denotes Kronecker's delta. Notice that the divergence form operator with coefficients $a_{j k}^{\alpha \beta} (\mu)$ is formally given by (here and below we sum over repeated indices)
\begin{align*}
 \partial_j a_{j k}^{\alpha \beta} (\mu) \partial_k u_{\beta} = \Delta u_{\alpha} + \mu \partial_{\alpha} \divergence(u).
\end{align*}
Hence, if this operator acts only on solenoidal functions, this in merely the Laplacian. Consequently, defining the sesquilinear form
\begin{align*}
 \fb_{\mu} : \cH^1_{\sigma} (\Omega) \times \cH^1_{\sigma} (\Omega) \to \IC, \quad (u , v) \mapsto \int_{\Omega} a_{j k}^{\alpha \beta} (\mu) \partial_k u_{\beta} \cdot \overline{\partial_j v_{\alpha}} \; \d x
\end{align*}
gives still rise to an operator associated to the Stokes equations. The weak Stokes operator $\cB_{\mu}$ on $\cH^{-1}_{0 , \sigma} (\Omega)$ subject to Neumann-type boundary conditions is defined as
\begin{align*}
 \dom(\cB_{\mu}) &:= \cH^1_{\sigma} (\Omega),\\
 \langle \cB_{\mu} u , v \rangle_{\cH^{-1}_{0 , \sigma} , \cH^1_{\sigma}} &:= \fb_{\mu} (u , v) \quad \text{for} \quad u \in \dom(\cB_{\mu}) \text{ and } v \in \cH^1_{\sigma} (\Omega).
\end{align*}
For the Stokes operator subject to Neumann-type boundary conditions on the negative scale one has to understand the boundary condition very carefully as right-hand sides in $\cH^{-1}_{0 , \sigma} (\Omega)$ could induce inhomogeneous boundary terms. For example the functional
\begin{align*}
 \cH^1_{\sigma} (\Omega) \ni v \mapsto \int_{\partial \Omega} f \cdot \overline{v} \; \d \sigma =: F(v)
\end{align*}
for a smooth function $f$ lies in $\cH^{-1}_{0 , \sigma} (\Omega)$. Thus, the solution to the problem $\cB_{\mu} u = F$ would satisfy an inhomogeneous boundary condition. \par
The Stokes operator $B_{\mu}$ on $\cL^2_{\sigma} (\Omega)$ subject to Neumann-type boundary conditions is then defined as the part of $\cB_{\mu}$ in $\cL^2_{\sigma} (\Omega)$, i.e.,
\begin{align*}
 \dom(B_{\mu}) &:= \{ u \in \cL^2_{\sigma} (\Omega) : u \in \dom(\cB_{\mu}) \text{ and } \cB_{\mu} u \in \cL^2_{\sigma} (\Omega) \}, \\
 B_{\mu} u &:= \cB_{\mu} u \qquad (u \in \dom(B_{\mu})).
\end{align*}
Elements $u \in \dom(B_{\mu})$ formally satisfy the boundary conditions stated in~\eqref{Neu}. Notice that the symmetry of $\fb_{\mu}$ implies that $B_{\mu}$ is a self-adjoint operator on $\cL^2_{\sigma} (\Omega)$, see~\cite[Thm.~VI.2.23]{Kato}. Furthermore, the definition of $B_{\mu}$ implies for the resolvent sets the inclusion $\rho(\cB_{\mu}) \subset \rho(B_{\mu})$ and that for $\lambda \in \rho(- \cB_{\mu})$ it holds
\begin{align*}
 (\lambda + B_{\mu})^{-1} = (\lambda + \cB_{\mu})^{-1}|_{\cL^2_{\sigma} (\Omega)}.
\end{align*}

\subsection{The Laplace operators} Similarly, we introduce the weak Laplace operators $- \Delta_D$ on $\H^{-1} (\Omega)$ and $- \Delta_N$ on $\H^{-1}_0 (\Omega)$ via the sesquilinear form
\begin{align*}
 \cV \times \cV \to \IC, \quad (u , v) \mapsto \int_{\Omega} \nabla u \cdot \overline{\nabla v} \; \d x.
\end{align*}
The domain of the sesquilinear form $\cV$ is taken to be $\H^1_0 (\Omega)$ for the Dirichlet Laplacian and $\H^1 (\Omega)$ for the Neumann Laplacian. Recall that by Poincar\'e's inequality $\Delta_D$ is invertible and that $\Delta_N$ is invertible if considered on the factor space $(\H^1 (\Omega) / \textrm{const})^*$. Finally, recall that if the boundary of $\Omega$ is $\C^{1 , 1}$-regular or if $\Omega$ is convex the operators $\Delta_D^{-1} : \L^2 (\Omega) \to \H^2 (\Omega)$ and $\Delta_N^{-1} : \L^2_0 (\Omega) \to \H^2 (\Omega)$ are bounded, see~\cite[Sec.~3.2.1]{Grisvard} for the particular statements on convex domains, see also the discussion at the beginning of Section~\ref{Sec: Estimates on convex domains}. Here $\L^2_0 (\Omega)$ denotes the $\L^2$-space of average free functions. In the following, we do not distinguish the notation between the weak Laplacians defined on negative spaces or the strong Laplacians defined on $\L^2 (\Omega)$.

\subsection{The Bogovski\u{\i} operator}

Let us consider the divergence problem
\begin{align*}
\left\{
\begin{aligned}
 \divergence(u) &= f && \text{in } \Omega \\
 u &= 0 && \text{on } \partial \Omega,
\end{aligned} \right.
\end{align*}
where $f \in \L^2_0 (\Omega)$ and $\Omega$ is a bounded Lipschitz domain. It is well-known, see, e.g.,~\cite[Ch.~III.3]{Galdi} and the references therein, that there exists a bounded and linear operator $B : \L^2_0 (\Omega) \to \H^1_0 (\Omega ; \IC^d)$ which satisfies $\divergence(B f) = f$. This means that $u := B f$ solves the divergence problem posed above. Clearly, $B$ is a highly non-unique operator as one can always add a function $v \in \H^1_{0 , \sigma} (\Omega)$ to the solution $u$ and still have a solution to the problem. Here and below, the operator $B$ is called the Bogovski\u{\i} operator.

\subsection{The Helmholtz projection}

The Helmholtz projection $\IP : \L^2 (\Omega ; \IC^d) \to \L^2 (\Omega ; \IC^d)$ is introduced as being the orthogonal projection of $\L^2 (\Omega ; \IC^d)$ onto $\L^2_{\sigma} (\Omega)$. Analogously, let $\IQ : \L^2 (\Omega ; \IC^d) \to \L^2 (\Omega ; \IC^d)$ denote the orthogonal projection of $\L^2 (\Omega ; \IC^d)$ onto $\cL^2_{\sigma} (\Omega)$. It is well-known, see~\cite[Sec.~11]{Fabes_Mendez_Mitrea}, that the range of $\Id - \IP$ is given by
\begin{align}
\label{Eq: Orthogonal range Helmholtz P}
 \Rg(\Id - \IP) = \nabla \H^1 (\Omega) := \{ \nabla \varphi : \varphi \in \H^1 (\Omega) \}
\end{align}
and that the range of $\Id - \IQ$ is given by
\begin{align}
\label{Eq: Orthogonal range Helmholtz Q}
 \Rg(\Id - \IQ) = \nabla \H^1_0 (\Omega) := \{ \nabla \varphi : \varphi \in \H^1_0 (\Omega) \}.
\end{align}
Notice that $\IP$ and $\IQ$ can be realized by employing the Neumann and the Dirichlet Laplacian as follows. Define a distribution
\begin{align*}
 \langle \widetilde{\divergence} (u) , v \rangle_{\H^{-1}_0 , \H^1} := - \int_{\Omega} u \cdot \overline{\nabla v} \; \d x \quad \text{for} \quad u \in \L^2 (\Omega ; \IC^d) \text{ and } v \in \H^1 (\Omega),
\end{align*}
which acts as the distribution generated by the divergence operator but ignores the boundary values that would arise due to the integration by parts. Furthermore, define the usual divergence as
\begin{align*}
 \langle \divergence (u) , v \rangle_{\H^{-1} , \H^1_0} := - \int_{\Omega} u \cdot \overline{\nabla v} \; \d x \quad \text{for} \quad u \in \L^2 (\Omega ; \IC^d) \text{ and } v \in \H^1_0 (\Omega).
\end{align*}
Then, $\IP$ and $\IQ$ can be represented as
\begin{align}
\label{Eq: Representation Helmholtz}
 \IP = \Id + \nabla (- \Delta_N)^{-1} \widetilde{\divergence} \qquad \text{and} \qquad  \IQ = \Id + \nabla (- \Delta_D)^{-1} \divergence.
\end{align}
A calculation verifying this identity for $\IP$ can be found in~\cite[Lem.~5.1.3]{Tolksdorf_Dissertation} and for $\IQ$ in the proof of~\cite[Lem.~2.1]{Mitrea_Monniaux_Wright}. We record the following lemma.

\begin{lemma}
Let $\Omega$ be a bounded Lipschitz domain such that $\Delta_D^{-1} : \L^2 (\Omega) \to \H^2 (\Omega)$ and $\Delta_N^{-1} : \L^2_0 (\Omega) \to \H^2 (\Omega)$ are bounded. Then $\IP : \H^1_0 (\Omega ; \IC^d) \to \H^1 (\Omega ; \IC^d)$ and $\IQ : \H^1 (\Omega ; \IC^d) \to \H^1 (\Omega ; \IC^d)$ are bounded operators. In particular, if $\Omega$ is convex, then these operator norms depend at most on the dimension $d$.
\end{lemma}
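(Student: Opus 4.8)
The plan is to read off everything from the representation formulas~\eqref{Eq: Representation Helmholtz}. Writing $\IP u = u + \nabla w_N$ with $w_N := (- \Delta_N)^{-1} \widetilde{\divergence}(u)$ and $\IQ u = u + \nabla w_D$ with $w_D := (- \Delta_D)^{-1} \divergence(u)$, it suffices to prove that $u \mapsto \nabla w_N$ is bounded from $\H^1_0 (\Omega ; \IC^d)$ to $\H^1 (\Omega ; \IC^d)$ and that $u \mapsto \nabla w_D$ is bounded from $\H^1 (\Omega ; \IC^d)$ to $\H^1 (\Omega ; \IC^d)$, since $\H^1_0 (\Omega ; \IC^d)$ embeds into $\H^1 (\Omega ; \IC^d)$ with norm one.

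First I would identify the two distributional divergences with honest $\L^2$-functions on the relevant class of $u$. If $u \in \H^1 (\Omega ; \IC^d)$ then for $v \in \H^1_0 (\Omega)$ an integration by parts, with no boundary contribution since $v$ has vanishing trace, gives $\langle \divergence(u) , v \rangle = \int_{\Omega} (\divergence u) \overline v \; \d x$, so the functional $\divergence(u)$ is represented by the weak divergence $\divergence u \in \L^2 (\Omega)$ with $\| \divergence u \|_{\L^2} \leq \| \nabla u \|_{\L^2} \leq \| u \|_{\H^1}$. If instead $u \in \H^1_0 (\Omega ; \IC^d)$ then for $v \in \H^1 (\Omega)$ the boundary term $\int_{\partial \Omega} (n \cdot u) \overline v \; \d \sigma$ vanishes because $u$ has vanishing trace, whence $\widetilde{\divergence}(u)$ is again represented by $\divergence u$; moreover testing with $v \equiv 1$ shows $\int_{\Omega} \divergence u \; \d x = 0$, so $\divergence u \in \L^2_0 (\Omega)$. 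The hypotheses then apply directly: $\| w_D \|_{\H^2} \leq \| \Delta_D^{-1} \|_{\L^2 \to \H^2} \| \divergence u \|_{\L^2}$ and $\| w_N \|_{\H^2} \leq \| \Delta_N^{-1} \|_{\L^2_0 \to \H^2} \| \divergence u \|_{\L^2}$, and since $\| \nabla w \|_{\H^1 (\Omega ; \IC^d)} \leq \| w \|_{\H^2 (\Omega)}$ for every $w \in \H^2 (\Omega)$, both boundedness statements follow, with operator norms controlled by $\| \Delta_D^{-1} \|_{\L^2 \to \H^2}$ and $\| \Delta_N^{-1} \|_{\L^2_0 \to \H^2}$, respectively.

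For the convex case, Grisvard's estimates recalled above (cf.~\cite[Sec.~3.2.1]{Grisvard}) already provide the boundedness of $\Delta_D^{-1} : \L^2 (\Omega) \to \H^2 (\Omega)$ and $\Delta_N^{-1} : \L^2_0 (\Omega) \to \H^2 (\Omega)$, hence of $\IP$ and $\IQ$; the issue is only to see that the operator norms depend at most on $d$. Here I would \emph{not} estimate $\| w \|_{\H^2}$ but only $\| \nabla w \|_{\H^1}^2 = \| \nabla w \|_{\L^2}^2 + \| \nabla^2 w \|_{\L^2}^2$, which is all that enters $\IP u = u + \nabla w_N$ and $\IQ u = u + \nabla w_D$. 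For the Hessian, Grisvard's integration-by-parts identity on a bounded convex domain gives the geometry-independent bound $\| \nabla^2 w \|_{\L^2} \leq \| \Delta w \|_{\L^2}$ for $w \in \H^2 (\Omega) \cap \H^1_0 (\Omega)$ as well as for $w \in \H^2 (\Omega)$ with $\partial_n w = 0$, the discarded boundary term being nonnegative by convexity; thus $\| \nabla^2 w_D \|_{\L^2} , \| \nabla^2 w_N \|_{\L^2} \leq \| \divergence u \|_{\L^2} \leq \| u \|_{\H^1}$. For the first-order term I would test the weak equation for $w$ against $w$ itself: for $w_D$ this yields $\| \nabla w_D \|_{\L^2}^2 = - \int_{\Omega} u \cdot \overline{\nabla w_D} \; \d x \leq \| u \|_{\L^2} \| \nabla w_D \|_{\L^2}$, hence $\| \nabla w_D \|_{\L^2} \leq \| u \|_{\L^2}$, and likewise $\| \nabla w_N \|_{\L^2} \leq \| u \|_{\L^2}$. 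Altogether $\| \nabla w \|_{\H^1} \leq C_d \| u \|_{\H^1}$ with $C_d$ depending only on the combinatorial constants relating the $\H^1$- and $\H^2$-norms to sums of $\L^2$-norms of derivatives, so $\| \IP u \|_{\H^1} , \| \IQ u \|_{\H^1} \leq (1 + C_d) \| u \|_{\H^1}$.

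The only genuinely delicate points are the correct identification of $\widetilde{\divergence}(u)$ as a \emph{mean-zero} $\L^2$-function when $u \in \H^1_0 (\Omega ; \IC^d)$, which is what permits applying $\Delta_N^{-1}$, defined only on $\L^2_0 (\Omega)$; and, for the dimension-only claim, the observation that one must avoid estimating $\| w \|_{\L^2}$ or $\| \nabla w \|_{\L^2}$ through a Poincaré-type inequality, whose constant would involve $\diam (\Omega)$. Testing against $w$ directly and using only Grisvard's Hessian bound circumvents this, since both ingredients are insensitive to the geometry.
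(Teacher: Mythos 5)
Your proof is correct and follows essentially the same route as the paper: read everything off the representation $\IP = \Id + \nabla (-\Delta_N)^{-1}\widetilde{\divergence}$, $\IQ = \Id + \nabla(-\Delta_D)^{-1}\divergence$, identify the distributional divergences with the (mean-zero, in the $\H^1_0$ case) weak divergence, and invoke the assumed $\L^2 \to \H^2$ bounds. The only difference is that where the paper simply cites Grisvard's Eq.~(3.1.2.2) and Eq.~(3.1.2.7) for the dimension-only dependence in the convex case, you work this out explicitly via the Hessian bound $\|\nabla^2 w\|_{\L^2} \leq \|\Delta w\|_{\L^2}$ and testing against $w$ to avoid Poincar\'e constants — a correct and welcome elaboration of the same point.
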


\begin{proof}
Notice that if $u \in \H^1_0 (\Omega ; \IC^d)$, then $\widetilde{\divergence} (u) = \divergence(u)$ and since $u \in \H^1_0 (\Omega)$ the average of $\divergence(u)$ on $\Omega$ is zero. Thus, the statements concerning the boundedness of $\IP$ and $\IQ$ directly follow by~\eqref{Eq: Representation Helmholtz} and the assumption that $\Delta_D^{-1} : \L^2 (\Omega) \to \H^2 (\Omega)$ and $\Delta_N^{-1} : \L^2_0 (\Omega) \to \H^2 (\Omega)$ are bounded. Concerning the dependence of the constants for $\Omega$ being convex, see~\cite[Eq.~(3.1.2.2), Eq.~(3.1.2.7)]{Grisvard}.
\end{proof}

By~\eqref{Eq: Representation Helmholtz} it should be clear that $\IP$ does not map $\H^1_0 (\Omega ; \IC^d)$ into $\H^1_0 (\Omega ; \IC^d)$, i.e., that it does not preserve zero boundary values (if the boundary is merely Lipschitz, then also the differentiability is not preserved). A formal proof on bounded $\C^4$-domains is given in the next lemma. The proof uses so-called Fermi coordinates. These coordinates are introduced following the exposition in~\cite[Sec.~2.3]{Dziuk_Elliott}. \par
Let $\delta (x)$ denote the oriented distance function, i.e., 
\begin{align*}
 \delta (x) := \begin{cases} \dist(x , \partial \Omega), & x \in \Omega \\ - \dist(x , \partial \Omega), & x \in \overline{\Omega}^c. \end{cases}
\end{align*}
If $\Omega$ has a $\C^k$-boundary with $k \in \IN$ with $k \geq 2$, one verifies by virtue of uniform inner and outer ball properties of $\partial \Omega$, that there exists $\eps > 0$ such that with
\begin{align*}
 U_{\eps} := \{ x \in \IR^d : \lvert \delta (x) \rvert < \eps \}
\end{align*}
one has $\delta \in \C^k (U_{\eps})$ and that for every point $x \in U_{\eps}$ there exists a unique point $a (x) \in \partial \Omega$ such that
\begin{align*}
 x = a (x) + \delta (x) n (a(x)),
\end{align*}
where $n$ denotes the exterior unit normal to $\partial \Omega$. Thus, in the neighborhood $U_{\eps}$, every point $x$ can be represented uniquely by the new coordinates $a(x)$ and $\delta(x)$. To proceed, we introduce some further geometric notions. A function $u \in \L^1 (\partial \Omega)$ is weakly differentiable if its composition with the coordinate chart is weakly differentiable in $\IR^{d - 1}$. For such functions one can define the tangential gradient $\nabla_{\T} u$ of $u$ (see, e.g., the exposition in~\cite[Sec.~1.3]{Tolksdorf_Dissertation}). The tangential gradient has the property, that for functions $u$ that are smooth enough and defined in a neighborhood of $\partial \Omega$ one has
\begin{align*}
 \nabla u (x) = \nabla_{\T} u (x) + (n(x) \cdot \nabla u(x)) n(x) \qquad (x \in \partial \Omega).
\end{align*}
Notice that $\nabla_{\T} u (x)$ for $x \in \partial \Omega$ always lies in the tangent space at $x$ if $\partial \Omega$ is smooth enough. Similarly, we define a vector $v \in \IC^d$ and $x \in \partial \Omega$ its tangential component $v_{\T}$ to satisfy
\begin{align}
\label{Eq: Tangential component}
 v_{\T} = v - (n(x) \cdot v) n(x).
\end{align}
This will be used in Section~\ref{Sec: Estimates on convex domains}. \par
Given a function $g \in \C^1 (\partial \Omega)$, then $g$ can be extended to a function $G$ on $U_{\eps}$ by setting
\begin{align*}
 G (x) := g(a (x)) \qquad (x \in U_{\eps})
\end{align*}
and~\cite[Eq.~(2.14)]{Dziuk_Elliott} shows that
\begin{align}
\label{Eq: Gradient of extended function}
 \nabla G (x) = (1 - \delta (x) \cH (x)) \nabla_{\T} g (a (x)).
\end{align}
Here, $\cH$ denotes the extended Weingarten map, which is given by
\begin{align*}
 \cH (x) = (\cH_{i , j} (x))_{i , j = 1}^d := \e_i \cdot \nabla_{\T} n_j (x)
\end{align*}
and which is $\C^2$-regular if $\Omega$ has a $\C^4$-boundary. Notice that $\e_i$ denotes the $i$th standard basis vector of $\IR^d$.

\begin{lemma}
\label{Lem: Trace of Helmholtz}
Let $\Omega$ be a bounded domain with $\C^4$-boundary. Then there exists $u \in \H^1_0 (\Omega ; \IC^d)$ such that the trace of $\IP u$ to $\partial \Omega$ is not zero.
\end{lemma}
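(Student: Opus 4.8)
The plan is to read off the trace of $\IP u$ from the representation formula~\eqref{Eq: Representation Helmholtz}, namely $\IP = \Id + \nabla (-\Delta_N)^{-1} \widetilde{\divergence}$. For $u \in \H^1_0(\Omega ; \IC^d)$ one has $\widetilde{\divergence}(u) = \divergence(u)$ with vanishing average over $\Omega$ (this is exactly the observation made in the preceding lemma), so that $\varphi := (-\Delta_N)^{-1} \divergence(u)$ is the unique average-free weak solution of $-\Delta \varphi = \divergence(u)$ subject to the homogeneous Neumann condition, and $\IP u = u + \nabla \varphi$. Since the trace of $u$ on $\partial \Omega$ is zero, the trace of $\IP u$ equals the trace of $\nabla \varphi$. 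Hence it suffices to produce a single $f \in \L^2_0(\Omega)$ for which $\varphi := (-\Delta_N)^{-1} f$ satisfies $\nabla \varphi|_{\partial \Omega} \neq 0$: once such an $f$ is found, the Bogovski\u{\i} operator furnishes $u := B f \in \H^1_0(\Omega ; \IC^d)$ with $\divergence(u) = f$, and then $\IP u = u + \nabla \varphi$ has nonvanishing trace on $\partial \Omega$.

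Next I would note that any such $\varphi$ has vanishing normal derivative on $\partial \Omega$ (it is a weak solution of the Neumann problem), so the decomposition $\nabla \varphi(x) = \nabla_{\T} \varphi(x) + (n(x) \cdot \nabla \varphi(x)) n(x)$ valid on $\partial \Omega$ reduces to $\nabla \varphi|_{\partial \Omega} = \nabla_{\T}(\varphi|_{\partial \Omega})$. Thus it is enough to construct a function $\varphi$, smooth up to $\partial \Omega$, with $n \cdot \nabla \varphi = 0$ on $\partial \Omega$ and with non-constant boundary trace on some connected component of $\partial \Omega$; then $\nabla_{\T}(\varphi|_{\partial \Omega}) \not\equiv 0$ and we set $f := -\Delta \varphi$. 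The Fermi-coordinate extension set up above does precisely this: choose $g \in \C^{\infty}(\partial \Omega)$ non-constant, extend it to $G(x) := g(a(x))$ on $U_{\eps}$, and multiply by a smooth cut-off $\chi = \chi(\delta(x))$ which equals $1$ on $U_{\eps / 2}$ and is supported in $U_{\eps}$, extending by zero to obtain $\varphi_0$ on $\overline{\Omega}$. By~\eqref{Eq: Gradient of extended function} one has $\nabla G = (1 - \delta \cH) \nabla_{\T} g(a(\cdot))$, which on $\partial \Omega$ is the purely tangential field $\nabla_{\T} g$; hence $n \cdot \nabla \varphi_0 = 0$ on $\partial \Omega$ and $\varphi_0|_{\partial \Omega} = g$. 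Finally put $\varphi := \varphi_0 - c$ with $c$ the average of $\varphi_0$ over $\Omega$, and $f := -\Delta \varphi$. Then $f$ is continuous up to $\overline{\Omega}$ with $\int_{\Omega} f \, \d x = -\int_{\partial \Omega} n \cdot \nabla \varphi \, \d \sigma = 0$, so $f \in \L^2_0(\Omega)$, and by uniqueness of the average-free weak Neumann solution $\varphi = (-\Delta_N)^{-1} f$. Since $g$ is non-constant, $\nabla_{\T} g \not\equiv 0$, and the trace of $\IP u = u + \nabla \varphi$ equals $\nabla_{\T} g \neq 0$, which proves the claim.

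All the steps are routine; the only points needing a little care are that $\nabla \varphi$ genuinely admits a boundary trace and that this trace coincides with the tangential gradient of $\varphi|_{\partial \Omega}$, and that subtracting the mean value $c$ affects neither the Neumann condition nor the Laplacian. Both are unproblematic here: the $\C^4$-hypothesis makes the Fermi coordinates and the map $a$ sufficiently regular, so that $\varphi$ is at least twice continuously differentiable up to $\partial \Omega$ and all traces are classical, and constants have vanishing gradient and vanishing Laplacian. I do not expect any genuine obstacle; essentially the whole content is the geometric construction, for which the machinery already developed in this section is more than enough.
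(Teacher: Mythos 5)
Your proposal is correct and follows essentially the same route as the paper's own proof: both reduce the claim via the representation $\IP = \Id + \nabla(-\Delta_N)^{-1}\widetilde{\divergence}$ to constructing a scalar function with vanishing normal derivative but non-vanishing tangential gradient on $\partial\Omega$, build it from a non-constant $g$ on $\partial\Omega$ via the Fermi-coordinate extension $g(a(x))$ times a cut-off, and then pull back with the Bogovski\u{\i} operator. Your write-up is merely somewhat more explicit about the mean-value normalization and the identification of the trace of $\nabla\varphi$ with $\nabla_{\T}g$; there is no substantive difference.
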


\begin{proof}
Notice that $\divergence : \H^1_0 (\Omega ; \IC^d) \to \L^2_0 (\Omega)$ is surjective by~\cite[Thm.~III.3.1]{Galdi}. Consequently, the range of $\Delta_N^{-1} \divergence$ is given by
\begin{align*}
 \Rg(\Delta_N^{-1} \divergence) = \Rg(\Delta_N^{-1}|_{\L^2_0 (\Omega)}) = \{ u \in \H^2 (\Omega) : n \cdot \nabla u = 0 \text{ on } \partial \Omega \}.
\end{align*}
Now, if there exists $u \in \H^2 (\Omega)$ with $n \cdot \nabla u = 0$ on $\partial \Omega$ and $\nabla u \neq 0$ on $\partial \Omega$, set $f := B \Delta_N u$ (with $B$ being the Bogovski\u{\i} operator) which lies in $\H^1_0 (\Omega ; \IC^d)$ and by virtue of~\eqref{Eq: Representation Helmholtz} $\IP f$ satisfies $\tr(\IP f) \neq 0$. This would conclude the proof. To construct such a function $u$, let $g : \partial \Omega \to \IC$ be a non-constant and smooth function and let $\eta \in \C_c^{\infty} (U_{\eps})$ with $\eta = 1$ in a neighborhood of $\partial \Omega$. Extend $g$ to $U_{\eps}$ by setting
\begin{align*}
 u (x) := g(a (x)) \eta (x).
\end{align*}
The gradient of $u$ is calculated by virtue of~\eqref{Eq: Gradient of extended function}, leading to
\begin{align*}
 \nabla u (x) = \eta (x) (1 - \delta (x) \cH(x)) \nabla_{\T} g (a (x)) + g(a (x)) \nabla \eta (x).
\end{align*}
Clearly, the normal derivative of $u$ vanishes on $\partial \Omega$ while its full gradient is non-zero since $g$ is non-constant on $\partial \Omega$. Since $\eta$, $\delta$, and $\cH$ are at least $\C^2$-regular, it follows that $u \in \Rg(\Delta_N^{-1} \divergence)$ with $\nabla u$ not being constantly zero on $\partial \Omega$.
\end{proof}

\subsection{Resolvent estimates}

We continue by discussing some classical resolvent estimates in $\L^2_{\sigma} (\Omega)$ and $\H^{-1}_{\sigma} (\Omega)$ for the operators $A$ and $\cA$, and for $B_{\mu}$ and $\cB_{\mu}$ on $\cL^2_{\sigma} (\Omega)$ and $\cH^{- 1}_{0 , \sigma} (\Omega)$. In the case of Neumann-type boundary conditions, we restrict ourselves to parameters that satisfy $\mu \in (-1 , 1]$. This is due to the fact, that this ensures a certain coercivity of the sesquilinear form $\fb_{\mu}$. Indeed, by~\cite[Prop.~4.1.2]{Mitrea_Wright}, for $\lvert \mu \rvert < 1$ there exists $\kappa_{\mu} > 0$ such that
\begin{align}
\label{Eq: Ellipticity Neumann}
 \Re(a_{j k}^{\alpha \beta} \xi_{\beta k} \overline{\xi_{\alpha j}}) \geq \kappa_{\mu} \lvert \xi \rvert^2 \qquad (\xi \in \IC^{d \times d}).
\end{align}
Moreover, the same result ensures that in the case $\mu = 1$ there exists $\kappa_1 > 0$ such that
\begin{align}
\label{Eq: Symmetric gradient}
 \Re(a_{j k}^{\alpha \beta} \xi_{\beta k} \overline{\xi_{\alpha j}}) \geq \kappa_1 \lvert \xi + \xi^{\top} \rvert^2 \qquad (\xi \in \IC^{d \times d}).
\end{align}
To proceed, define for some angle $\theta \in [0 , \pi)$ the sector in the complex plane
\begin{align*}
 \S_{\theta} := \begin{cases} (0 , \infty), &\text{if } \theta = 0 \\ \{ z \in \IC \setminus \{ 0 \} : \lvert \arg(z) \rvert  < \theta \}, &\text{if } \theta \in (0 , \pi). \end{cases}
\end{align*}
Notice, that by elementary trigonometry, one can prove that there exists $C_{\theta} > 0$ depending only on $\theta$, such that
\begin{align}
\label{Eq: Inverse triangle inequality}
 \lvert z \rvert + \alpha \leq C_{\theta} \lvert z + \alpha \rvert \qquad (z \in \overline{\S_{\theta}} , \alpha \geq 0).
\end{align}

\begin{proposition}
\label{Prop: Resolvent}
Let $\Omega \subset \IR^d$, $d \geq 2$, be a bounded Lipschitz domain and $\theta \in [0 , \pi)$.
\begin{enumerate}
 \item \label{Prop: Dirichlet} It holds $\{ 0 \} \cup \S_{\theta} \subset \rho(- A) \cap \rho(- \cA)$. Moreover, there exists $C > 0$ depending only on $d$ and $\theta$ such that for all $f \in \L^2_{\sigma} (\Omega)$, all $F \in \H^{-1}_{\sigma} (\Omega)$, and all $\lambda \in \S_{\theta}$ it holds
\begin{align*}
 \lvert \lambda \rvert \| (\lambda + A)^{-1} f \|_{\L^2_{\sigma} (\Omega)} + \lvert \lambda \rvert^{1 / 2} \| \nabla (\lambda + A)^{-1} f \|_{\L^2 (\Omega ; \IC^{d^2})} \leq C \| f \|_{\L^2_{\sigma} (\Omega)}
\end{align*}
and
\begin{align*}
 \lvert \lambda \rvert \| (\lambda + \cA)^{-1} F \|_{\H^{-1}_{\sigma} (\Omega)} &+ \lvert \lambda \rvert^{1 / 2} \| (\lambda + \cA)^{-1} F \|_{\L^2_{\sigma} (\Omega)} \\
 &+ \| \nabla (\lambda + \cA)^{-1} F \|_{\L^2 (\Omega ; \IC^{d^2})} \leq C \| F \|_{\H^{-1}_{\sigma} (\Omega)}.
\end{align*}
 \item \label{Prop: Neumann} For all $\mu \in (-1 , 1)$ it holds $\S_{\theta} \subset \rho(- B_{\mu}) \cap \rho(- \cB_{\mu})$. Moreover, there exists $C > 0$ depending only on $d$, $\theta$, and $\mu$ such that for all $f \in \cL^2_{\sigma} (\Omega)$, all $F \in \cH^{-1}_{0 , \sigma} (\Omega)$, and all $\lambda \in \S_{\theta}$ it holds
\begin{align*}
 \lvert \lambda \rvert \| (\lambda + B_{\mu})^{-1} f \|_{\cL^2_{\sigma} (\Omega)} + \lvert \lambda \rvert^{1 / 2} \| \nabla (\lambda + B_{\mu})^{-1} f \|_{\L^2 (\Omega ; \IC^{d^2})} \leq C \| f \|_{\cL^2_{\sigma} (\Omega)}
\end{align*}
and
\begin{align*}
 \lvert \lambda \rvert \| (\lambda + \cB_{\mu})^{-1} F \|_{\cH^{-1}_{0 , \sigma} (\Omega)} &+ \lvert \lambda \rvert^{1 / 2} \| (\lambda + \cB_{\mu})^{-1} F \|_{\cL^2_{\sigma} (\Omega)} \\
 &+ \| \nabla (\lambda + \cB_{\mu})^{-1} F \|_{\L^2 (\Omega ; \IC^{d^2})} \leq C \| F \|_{\cH^{-1}_{0 , \sigma} (\Omega)}.
\end{align*}
 \item \label{Prop: Free} For $\mu = 1$ it holds $\S_{\theta} \subset \rho(- B_1) \cap \rho(- \cB_1)$. Moreover, there exists $C > 0$ depending only on $d$, $\theta$, the Lipschitz character of $\Omega$, and $\diam (\Omega)$ such that for all $f \in \cL^2_{\sigma} (\Omega)$, all $F \in \cH^{-1}_{0 , \sigma} (\Omega)$, and all $\lambda \in \S_{\theta}$ it holds
\begin{align*}
 \lvert \lambda \rvert \| (\lambda + B_1)^{-1} f \|_{\cL^2_{\sigma} (\Omega)} + \lvert \lambda \rvert^{1 / 2} \| (D + D^{\top}) (\lambda + B_1)^{-1} f \|_{\L^2 (\Omega ; \IC^{d \times d})} \leq C \| f \|_{\cL^2_{\sigma} (\Omega)}
\end{align*}
and
\begin{align*}
 \lvert \lambda \rvert \| (\lambda + \cB_1)^{-1} F \|_{\cH^{-1}_{0 , \sigma} (\Omega)} &+ \lvert \lambda \rvert^{1 / 2} \| (\lambda + \cB_1)^{-1} F \|_{\cL^2_{\sigma} (\Omega)} \\
 &+ \| (D + D^{\top}) (\lambda + \cB_1)^{-1} F \|_{\L^2 (\Omega ; \IC^{d \times d})} \leq C \| F \|_{\cH^{-1}_{0 , \sigma} (\Omega)}.
\end{align*}
Recall that $D u := (\partial_i u_j)_{i , j = 1}^d$ denotes the Jacobian matrix of some function $u$.
\end{enumerate}
\end{proposition}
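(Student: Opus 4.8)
The plan is to realize all three pairs of operators through their defining sesquilinear forms and to reduce the statement to standard resolvent estimates for closed, non-negative forms, the only genuinely domain-dependent ingredient being Korn's second inequality in case~\ref{Prop: Free}. First I would check that $\fa$, $\fb_\mu$ for $|\mu| < 1$, and $\fb_1$ are densely defined, bounded, symmetric, non-negative, and closed on $\H^1_{0,\sigma}(\Omega)$, resp.\ on $\cH^1_\sigma(\Omega)$, writing $\mathfrak{t}$ for whichever form is at stake. Boundedness and symmetry are immediate once one notes that the coefficients $a_{jk}^{\alpha\beta}(\mu)$ are real and satisfy $a_{jk}^{\alpha\beta}(\mu) = a_{kj}^{\beta\alpha}(\mu)$, whence $\fb_\mu(u,u) \in \IR$ and, by~\eqref{Eq: Ellipticity Neumann} resp.\ \eqref{Eq: Symmetric gradient}, $\fb_\mu(u,u) \geq \kappa_\mu \|\nabla u\|_{\L^2}^2 \geq 0$ resp.\ $\fb_1(u,u) \geq \kappa_1 \|Du + (Du)^\top\|_{\L^2}^2 \geq 0$. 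Closedness is equivalent to the form norm $(\|u\|_{\L^2}^2 + \mathfrak{t}(u,u))^{1/2}$ being comparable to the $\H^1$-norm on the form domain: this is trivial for $\fa$, follows from~\eqref{Eq: Ellipticity Neumann} for $\fb_\mu$ with $|\mu| < 1$, and for $\fb_1$ is exactly Korn's second inequality $\|u\|_{\H^1}^2 \leq C(\|Du + (Du)^\top\|_{\L^2}^2 + \|u\|_{\L^2}^2)$ on a bounded Lipschitz domain, with $C$ depending only on $d$, the Lipschitz character, and $\diam(\Omega)$; combined with completeness of the form domain in $\H^1$ this yields closedness. The associated self-adjoint operators $A$, $B_\mu$, $B_1$ are then non-negative, so the spectra of $-A$, $-B_\mu$, $-B_1$ lie in $(-\infty, 0]$, while Poincaré's inequality even yields $A \geq c_P > 0$; since $\S_\theta$ is disjoint from $(-\infty, 0]$ for $\theta < \pi$, the claimed resolvent-set inclusions follow, as does $0 \in \rho(-A)$.

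For the $\L^2$-level bounds I would combine the spectral-theorem estimate $\|(\lambda + T)^{-1}\|_{\Lop(H)} \leq \dist(\lambda, \sigma(-T))^{-1}$ for non-negative self-adjoint $T$ with the elementary bound $|\lambda + t| \geq C_\theta^{-1}(|\lambda| + t)$ for $t \geq 0$ and $\lambda \in \S_\theta$ supplied by~\eqref{Eq: Inverse triangle inequality}, which gives $\dist(\lambda, (-\infty, 0]) \geq C_\theta^{-1}|\lambda|$ and hence $|\lambda|\|(\lambda + T)^{-1} f\|_H \leq C_\theta \|f\|_H$. For the first-order term, put $u := (\lambda + T)^{-1} f$, test the resolvent identity with $u$, and take real parts of $\lambda \|u\|_{\L^2}^2 + \mathfrak{t}(u,u) = \scal{f}{u}$; since $\mathfrak{t}(u,u) \in \IR$,
\[
 \mathfrak{t}(u,u) = \Re \scal{f}{u} - \Re\lambda\,\|u\|_{\L^2}^2 \leq \|f\|_{\L^2}\|u\|_{\L^2} + |\lambda|\,\|u\|_{\L^2}^2 \leq C_\theta |\lambda|^{-1} \|f\|_{\L^2}^2,
\]
and the coercivity lower bound for $\mathfrak{t}$ converts this into the $|\lambda|^{1/2}$-weighted gradient (resp.\ symmetric-gradient) estimate.

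For the negative-order operators $\cA$, $\cB_\mu$, $\cB_1$ I would argue directly with the form. For $F$ in the corresponding antidual and $u := (\lambda + \cdot)^{-1} F$, testing with $v = u$ yields $\lambda \|u\|_{\L^2}^2 + \mathfrak{t}(u,u) = \langle F, u\rangle$. Since $\mathfrak{t}(u,u) \geq 0$ is real and $\lambda \in \S_\theta$ with $\theta < \pi$, a short case distinction on $\arg\lambda$---using $\Re(\e^{-\ii\arg\lambda}\langle F, u\rangle)$ and $\Re\langle F, u\rangle$ when $|\arg\lambda| \leq \pi/2$, and $\Im\langle F, u\rangle$ together with $\mathfrak{t}(u,u) = \Re\langle F, u\rangle - \Re\lambda\|u\|_{\L^2}^2$ when $|\arg\lambda| > \pi/2$---gives $|\lambda|\|u\|_{\L^2}^2 + \mathfrak{t}(u,u) \leq C_\theta |\langle F, u\rangle| \leq C_\theta \|F\|\,\|u\|$, the last norm being that of the form domain. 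For $\cA$, Poincaré's inequality bounds $\|u\|_{\H^1_{0,\sigma}}$ by a constant times $\fa(u,u)^{1/2}$, so the right-hand side can be absorbed to produce $\|\nabla u\|_{\L^2} \leq C\|F\|$ and then $|\lambda|^{1/2}\|u\|_{\L^2} \leq C\|F\|$; the $|\lambda|$-weighted antidual bound follows by rearranging the identity, $|\lambda|\,|\langle u, v\rangle| \leq |\langle F, v\rangle| + |\fa(u,v)| \leq C(\|F\| + \|\nabla u\|_{\L^2})\|v\|_{\H^1_{0,\sigma}}$. For $\cB_\mu$ and $\cB_1$ the form is only non-negative---constant vector fields lie in its kernel---so the absorption has to be accompanied by a separate treatment of the finite-dimensional space of constants, on which the resolvent acts as $\lambda^{-1}\Id$, the coercive argument via~\eqref{Eq: Ellipticity Neumann} resp.\ Korn's inequality then being run on a mean-zero complement.

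I expect the main obstacle to be case~\ref{Prop: Free}: everything there rests on the second Korn inequality on a bounded Lipschitz domain, needed both for closedness of $\fb_1$ and for the coercivity in the negative-order estimate, and it is the Korn constant that forces the dependence of $C$ on the Lipschitz character and $\diam(\Omega)$---in contrast to cases~\ref{Prop: Dirichlet} and~\ref{Prop: Neumann}, where the relevant quantities are $\kappa_\mu$ and the Poincaré constant on $\H^1_{0,\sigma}(\Omega)$, controlled by $d$, $\theta$, and $\mu$ alone. A secondary point requiring care is the lack of coercivity of $\fb_\mu$ and $\fb_1$, which makes the negative-order estimates for $\cB_\mu$ and $\cB_1$ more delicate than their no-slip counterparts.
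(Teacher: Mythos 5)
Your argument follows essentially the same route as the paper's proof: invertibility via coercivity of the shifted forms (you get it from self-adjointness and the spectral theorem, the paper from Lax--Milgram applied to $\fa_\lambda$, $\fb_{\mu,\lambda}$ --- a cosmetic difference), the lower bounds \eqref{Eq: Ellipticity Neumann} and \eqref{Eq: Symmetric gradient} together with the Korn inequality of Mitrea--Wright for $\mu=1$, the strong estimates by testing the resolvent identity with the solution and invoking \eqref{Eq: Inverse triangle inequality}, and the $\H^{-1}_\sigma$-bound on $\lvert\lambda\rvert u$ by dualizing the identity. Parts \eqref{Prop: Dirichlet} and the $\L^2$-level estimates in \eqref{Prop: Neumann} and \eqref{Prop: Free} are fine as you describe them.

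There is, however, one step that does not close as written: your treatment of the kernel of $\fb_\mu$ in the antidual estimates for $\cB_\mu$ and $\cB_1$. You correctly identify that the form annihilates constants and propose to run the absorption on a mean-zero complement (where a Poincar\'e--Wirtinger inequality indeed restores coercivity) while treating the constants separately, ``on which the resolvent acts as $\lambda^{-1}\Id$.'' But on the constants the claimed bound simply fails for small $\lvert\lambda\rvert$: take $F(v):=\int_\Omega c\cdot\overline{v}\,\d x$ for a constant vector $c$. Then $\|F\|_{\cH^{-1}_{0,\sigma}}=\|c\|_{\L^2}$, $u:=(\lambda+\cB_\mu)^{-1}F=\lambda^{-1}c$, and the middle term of the second display reads $\lvert\lambda\rvert^{1/2}\|u\|_{\cL^2_\sigma}=\lvert\lambda\rvert^{-1/2}\|c\|_{\L^2}$, which is not controlled by $C\|F\|_{\cH^{-1}_{0,\sigma}}$ as $\lambda\to0$. (The first and third terms are harmless on constants, and the complement is fine, so it is exactly this term that is the obstruction.) No decomposition can repair this, because the estimate as stated is violated by the constant mode; it holds uniformly only for, say, $\lvert\lambda\rvert\ge1$, or after projecting $F$ off the constants. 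You should be aware that the paper's own proof (``the remaining estimates follow analogously'') glosses over precisely this point, so the defect lies as much in the statement as in your proposal --- but your assertion that the separate treatment of the constants completes the absorption argument is not correct as written, and your write-up should either restrict the range of $\lambda$ for that term or record the failure explicitly.
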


\begin{proof}
The statements on the resolvent set follow by the Lemma of Lax--Milgram. Indeed, for $\lambda \in \S_{\theta}$ one defines new sesquilinear forms
\begin{align*}
 \fa_{\lambda} (u , v) := \lambda \int_{\Omega} u \cdot \overline{v} \; \d x + \fa (u , v)
\end{align*}
and analogously one defines $\fb_{\mu , \lambda}$. By~\eqref{Eq: Inverse triangle inequality}, $\fa_{\lambda}$ becomes coercive. If $\lvert \mu \rvert < 1$, then~\eqref{Eq: Inverse triangle inequality} together with~\eqref{Eq: Ellipticity Neumann} implies the coercivity of $\fb_{\mu , \lambda}$. Finally, in the case $\mu = 1$, one uses a Korn-type inequality proved in~\cite[Prop.~11.4.2]{Mitrea_Wright}, to define an equivalent norm on $\H^1 (\Omega ; \IC^d)$, which is given by
\begin{align*}
 \| u \| := \| u \|_{\L^2 (\Omega ; \IC^d)} + \| Du + [Du]^{\top} \|_{\L^2 (\Omega ; \IC^{d \times d})}.
\end{align*}
Notice that the constants implicit in the equivalence of the norms depend on the Lipschitz character of $\Omega$ and its diameter. In this case, the coercivity of $\fb_{1, \lambda}$ follows from~\eqref{Eq: Inverse triangle inequality} and~\eqref{Eq: Symmetric gradient}. The first inequalities of~\eqref{Prop: Dirichlet},~\eqref{Prop: Neumann}, and~\eqref{Prop: Free} follow as usual by testing the resolvent equation by the solution $u$ and by employing~\eqref{Eq: Inverse triangle inequality}, see, e.g.,~\cite[Prop.~5.2.5]{Tolksdorf_Dissertation}. Also the estimates on the second and third terms in the second inequalities of~\eqref{Prop: Dirichlet},~\eqref{Prop: Neumann}, and~\eqref{Prop: Free} follow by testing the resolvent equations by the solution $u$. Finally, the $\H^{-1}_{\sigma} (\Omega)$-estimate on $\lvert \lambda \rvert (\lambda + \cA)^{-1} F =: \lvert \lambda \rvert u$ follows by virtue of the resolvent equation and the estimates that were already established before by
\begin{align*}
 \sup_{\substack{v \in \H^1_{0 , \sigma} (\Omega) \\ \| v \|_{\H^1_0 \leq 1}}} \Big\lvert \lambda \int_{\Omega} u \cdot \overline{v} \; \d x \Big\rvert = \sup_{\substack{v \in \H^1_{0 , \sigma} (\Omega) \\ \| v \|_{\H^1_0 \leq 1}}} \Big\lvert \langle F , v \rangle_{\H^{-1}_{\sigma} , \H^1_{0 , \sigma}} - \int_{\Omega} \nabla u \cdot \overline{\nabla v} \; \d x \Big\rvert \leq C \| F \|_{\H^{-1}_{\sigma} (\Omega)}.
\end{align*}
In~\eqref{Prop: Neumann} and~\eqref{Prop: Free} the remaining estimates follow analogously.
\end{proof}

\begin{remark}
\label{Rem: Higher regularity}
Notice that if $\Omega$ has a $\C^{\infty}$-boundary and if $f \in \C_{\sigma}^{\infty} (\overline{\Omega})$ one shows by the method of difference quotients (by using~\eqref{Eq: Ellipticity Neumann}) and localization that for $\mu \in (-1 , 1)$ it holds $u \in \C^{\infty} (\overline{\Omega} ; \IC^d)$ and $\phi \in \C^{\infty} (\overline{\Omega})$.
\end{remark}

\subsection{Analytic semigroups and fractional powers}

It is well-known, see, e.g.,~\cite{Engel_Nagel}, that the resolvent estimates presented in Proposition~\ref{Prop: Resolvent}~\eqref{Prop: Dirichlet} imply that $-A$ and $- \cA$ generate bounded analytic semigroups $(\e^{- t A})_{t \geq 0}$ and $(\e^{- t \cA})_{t \geq 0}$ on $\L^2_{\sigma} (\Omega)$ or $\H^{-1}_{\sigma} (\Omega)$, respectively. By the real characterization of analytic semigroups~\cite[Thm.~II.4.6]{Engel_Nagel} it further holds
\begin{align}
\label{Eq: Real characterization}
 \sup_{t > 0} \| t A \e^{- t A} \|_{\Lop(\L^2_{\sigma} (\Omega))} < \infty \qquad \text{and} \qquad \sup_{t > 0} \| t \cA \e^{- t \cA} \|_{\Lop(\H^{-1}_{\sigma} (\Omega))} < \infty.
\end{align}
\indent For $\vartheta \in (0 , \pi / 2)$ and $r > 0$ let $\gamma_{\vartheta , r}$ denote the path that parameterizes $B(0 , r) \cup \S_{\vartheta}$ in the counterclockwise direction. For $t > 0$ the operators $\e^{- t A}$ and $\e^{- t \cA}$ are then given by the contour integrals
\begin{align}
\label{Eq: Cauchy integrals}
 \e^{- t A} = \frac{1}{2 \pi \ii} \int_{\gamma_{\vartheta , 1 / t}} \e^{- t \lambda} (\lambda - A)^{-1} \; \d \lambda \qquad \text{and} \qquad \e^{- t \cA} = \frac{1}{2 \pi \ii} \int_{\gamma_{\vartheta , 1 / t}} \e^{- t \lambda} (\lambda - \cA)^{-1} \; \d \lambda.
\end{align}
These integrals converge in $\Lop(\L^2_{\sigma} (\Omega))$ and $\Lop(\H^{-1}_{\sigma} (\Omega))$, respectively, due to Proposition~\ref{Prop: Resolvent}~\eqref{Prop: Dirichlet}. Using this representation also gradient estimates of the resolvent, cf.\@ Proposition~\ref{Prop: Resolvent}~\eqref{Prop: Dirichlet}, translate into gradient estimates of the corresponding semigroups, i.e., following for example~\cite[Prop.~3.7]{Tolksdorf_Watanabe} there exists $C > 0$ depending only on $d$ and $\theta$ such that for all $t > 0$
\begin{align}
\label{Eq: Gradient estimates semigroup}
 t^{1 / 2} \| \nabla \e^{- t A} \|_{\Lop(\L^2_{\sigma} (\Omega) , \L^2 (\Omega ; \IC^{d^2}))} + t^{1 / 2} \| \e^{- t \cA} \|_{\Lop(\H^{-1}_{\sigma} (\Omega) , \L^2_{\sigma} (\Omega))} \leq C.
\end{align}

\indent Besides analytic semigroups one can define for $\alpha > 0$ the fractional powers $A^{\alpha}$ and $\cA^{\alpha}$. There is a counterpart of~\eqref{Eq: Real characterization} for fractional powers reading for $0 < \alpha < 1$ as
\begin{align}
\label{Eq: Fractional parabolic smoothing}
 \sup_{t > 0} \| (t A)^{\alpha} \e^{- t A} \|_{\Lop(\L^2_{\sigma} (\Omega))} < \infty \qquad \text{and} \qquad \sup_{t > 0} \| (t \cA)^{\alpha} \e^{- t \cA} \|_{\Lop(\H^{-1}_{\sigma} (\Omega))} < \infty,
\end{align}
which follows for example by~\eqref{Eq: Real characterization} combined with the moment inequality~\cite[Prop.~6.6.4]{Haase}. Since the sesquilinear form that is associated to $A$ is symmetric~\cite[Thm.~VI.2.23]{Kato} yields that
\begin{align}
\label{Eq: Kato square root}
 \dom(A^{1 / 2}) = \H^1_{0 , \sigma} (\Omega).
\end{align}
Moreover,~\cite[Thm.~5.1]{Mitrea_Monniaux} implies that
\begin{align}
\label{Eq: Fractional power domains on L2}
 \dom(A^{\alpha}) = \H^{2 \alpha}_{0 , \sigma} (\Omega) \quad \text{if} \quad \tfrac{1}{4} < \alpha < \tfrac{1}{2} \qquad \text{and} \qquad \dom(A^{\alpha}) = \H^{2 \alpha}_{\sigma} (\Omega) \quad \text{if} \quad 0 < \alpha < \tfrac{1}{4}.
\end{align}
To determine the fractional power domains of $\cA^{\alpha}$ for $1 / 4 < \alpha \leq 1 / 2$ one can argue as follows: As $A$ is bijective, it follows that $A^{1 / 2}$ is an isomorphism from $\H^1_{0 , \sigma} (\Omega)$ onto $\L^2_{\sigma} (\Omega)$ and by duality, $(A^{1 / 2})^*$ is an isomorphism from $\L^2_{\sigma} (\Omega)$ onto $\H^{-1}_{\sigma} (\Omega)$. A quick calculation, cf.\@ \cite[Lem.~5.1]{Choudhury_Hussein_Tolksdorf} for the case $d = 3$, reveals that
\begin{align*}
 \cA = (A^{1 / 2})^* \circ A \circ (A^{- 1 / 2})^*.
\end{align*}
In other words, $A$ and $\cA$ are similar with respect to the isomorphism $(A^{1 / 2})^*$. Now, $\dom(\cA^{\alpha})$ is given by definition by $\Rg(\cA^{- \alpha})$. The similarity implies that
\begin{align*}
 \cA^{- \alpha} = (A^{1 / 2})^* \circ A^{- \alpha} \circ (A^{- 1 / 2})^* \qquad (\tfrac{1}{4} < \alpha \leq \tfrac{1}{2}).
\end{align*}
Thus, since $(A^{- 1 / 2})^*$ is an isomorphism from $\H^{-1}_{\sigma} (\Omega)$ onto $\L^2_{\sigma} (\Omega)$,~\eqref{Eq: Kato square root} and~\eqref{Eq: Fractional power domains on L2} imply that
\begin{align*}
 \Rg(\cA^{- \alpha}) = (A^{1 / 2})^* \H^{2 \alpha}_{0 , \sigma} (\Omega).
\end{align*}
Thus, $v \in \dom(\cA^{\alpha})$ if and only if there exists $u \in \H^{2 \alpha}_{0 , \sigma} (\Omega)$ such that $v = (A^{1 / 2})^* u$. To characterize these functionals in terms of Sobolev regularity, notice that by the self-adjointness of $A$ on $\L^2_{\sigma} (\Omega)$, $v$ is the functional
\begin{align*}
 \langle v , w \rangle_{\H^{-1}_{\sigma} , \H^1_{0 , \sigma}} = \langle u , A^{1 / 2} w \rangle_{\L^2_{\sigma} , \L^2_{\sigma}} = \langle A^{\alpha} u , A^{1 / 2 - \alpha} w \rangle_{\L^2_{\sigma} , \L^2_{\sigma}} \qquad (w \in \H^1_{0 , \sigma} (\Omega)).
\end{align*}
By~\eqref{Eq: Fractional power domains on L2} it now follows that
\begin{align*}
 \lvert \langle v , w \rangle_{\H^{-1}_{\sigma} , \H^1_{0 , \sigma}} \rvert \leq C \| A^{\alpha} u \|_{\L^2_{\sigma} (\Omega)} \| w \|_{\H^{1 - 2 \alpha}_{\sigma} (\Omega)}.
\end{align*}
This implies that $v \in \H^{2 \alpha - 1}_{\sigma} (\Omega)$. Finally, if $v \in \H^{2 \alpha - 1}_{\sigma} (\Omega)$, define $u := (A^{- 1 / 2})^* v$ and conclude that $u \in \H^{2 \alpha}_{0 , \sigma} (\Omega)$ by an interpolation argument (the case $\alpha = 1 / 2$ is clear so that we assume $1 / 4 < \alpha < 1 / 2$). Indeed, $(A^{- 1 / 2})^*$ is bounded from $\H^{-1}_{\sigma} (\Omega)$ onto $\L^2_{\sigma} (\Omega)$ and its restriction to $\L^2_{\sigma} (\Omega)$ (this restriction is the operator $A^{- 1 / 2}$) is bounded from $\L^2_{\sigma} (\Omega)$ onto $\H^1_{0 , \sigma} (\Omega)$. The complex interpolation space $[\H^{-1}_{\sigma} (\Omega) , \L^2_{\sigma} (\Omega)]_{2 \alpha}$ is calculated by the duality rule (notice that we identify $\L^2_{\sigma} (\Omega) \simeq \L^2_{\sigma} (\Omega)^*$) and~\cite[Thm.~2.12]{Mitrea_Monniaux} by
\begin{align*}
 \big[ \H^{-1}_{\sigma} (\Omega) , \L^2_{\sigma} (\Omega) \big]_{2 \alpha} = \big[ \L^2_{\sigma} (\Omega) , \H^1_{0 , \sigma} (\Omega) \big]_{1 - 2 \alpha}^* = \H^{2 \alpha - 1}_{\sigma} (\Omega).
\end{align*}
Moreover, employing~\cite[Thm.~2.12]{Mitrea_Monniaux} again yields
\begin{align*}
 \big[ \L^2_{\sigma} (\Omega) , \H^1_{0 , \sigma} (\Omega) \big]_{2 \alpha} = \H^{2 \alpha}_{0 , \sigma} (\Omega).
\end{align*}
It follows that $(A^{- 1 / 2})^*$ is bounded from $\H^{2 \alpha - 1}_{\sigma} (\Omega)$ onto $\H^{2 \alpha}_{0 , \sigma} (\Omega)$ and thus that $u \in \H^{2 \alpha}_{0 , \sigma} (\Omega)$. As a consequence, this reveals
\begin{align}
\label{Eq: Fractional power domains on H-1}
 \dom(\cA^{\alpha}) = \H^{2 \alpha - 1}_{\sigma} (\Omega) \quad \text{if} \quad \tfrac{1}{4} < \alpha \leq \tfrac{1}{2},
\end{align}
where $\H^0_{\sigma} (\Omega)$ is identified with $\L^2_{\sigma} (\Omega)$.

\section{On uniform pressure estimates}
\label{On uniform pressure estimates}

\noindent Having the theory on the Stokes operator from Section~\ref{Sec: The Stokes operator} at hand, one associates a pressure function $\phi$ to a solution $u$ as follows. Assume that $F \in \H^{-1} (\Omega ; \IC^d) \subset \H^{-1}_{\sigma} (\Omega)$ and let $\lambda \in \S_{\theta}$ for some $\theta \in [0 , \pi)$. By Proposition~\ref{Prop: Resolvent} there exists a unique $u := (\lambda + \cA)^{-1} F$ such that
\begin{align*}
 \langle G , v \rangle_{\H^{-1} , \H^1_0} := \langle F , v \rangle_{\H^{-1} , \H^1_0}  - \lambda \int_{\Omega} u \cdot \overline{v} \; \d x - \int_{\Omega} \nabla u \cdot \overline{\nabla v} \; \d x = 0 \quad (v \in \H^1_{0 , \sigma} (\Omega)).
\end{align*}
Then $G$ is a functional in $\H^{-1} (\Omega)$ which vanishes on $\C_{c , \sigma}^{\infty} (\Omega)$ so that $G$ must in fact be a gradient. Indeed, by~\cite[Lem.~II.2.2.2]{Sohr} there exists $\phi \in \L^2 (\Omega)$ with mean value zero such that
\begin{align}
\label{Eq: Variational Dirichlet}
 \lambda \int_{\Omega} u \cdot \overline{v} \; \d x + \int_{\Omega} \nabla u \cdot \overline{\nabla v} \; \d x - \int_{\Omega} \phi \, \overline{\divergence (v)} \; \d x = \langle F , v \rangle_{\H^{-1} , \H^1_0} \quad (v \in \H^1_0 (\Omega ; \IC^d)).
\end{align}

\indent In the case of Neumann-type boundary conditions, one proceeds similarly. As above, for $F \in \H^{-1}_0 (\Omega ; \IC^d)$ and $u := (\lambda + \cB_{\mu})^{-1} F$ one finds $\vartheta \in \L^2 (\Omega)$ such that
\begin{align*}
 \lambda \int_{\Omega} u \cdot \overline{v} \; \d x + \int_{\Omega} a_{j k}^{\alpha \beta} (\mu) \partial_k u_{\beta} \overline{\partial_j v_{\alpha}} \; \d x - \int_{\Omega} \vartheta \, \overline{\divergence (v)} \; \d x = \langle F , v \rangle_{\H^{-1}_0 , \H^1} \quad (v \in \H^1_0 (\Omega ; \IC^d)).
\end{align*}
However, it would be desirable to lift this identity to hold for all $v \in \H^1 (\Omega ; \IC^d)$. As one can expect, by the boundary condition given in~\eqref{Neu}, the pressure function is unique (in the case of no-slip boundary conditions the pressure is unique up to an additive constant). Thus, one must find a constant $c \in \IC$ such that the identity above with $\vartheta$ replaced by $\phi := \vartheta + c$ holds for all $v \in \H^1 (\Omega ; \IC^d)$. In fact, a way of how to construct this constant $c$ is described in the proof of~\cite[Thm.~6.8]{Mitrea_Monniaux_Wright}. Thus, we record that there exists $\phi \in \L^2 (\Omega)$ such that
\begin{align}
\label{Eq: Variational Neumann}
 \lambda \int_{\Omega} u \cdot \overline{v} \; \d x + \int_{\Omega} a_{j k}^{\alpha \beta} (\mu) \partial_k u_{\beta} \overline{\partial_j v_{\alpha}} \; \d x - \int \phi \, \overline{\divergence (v)} \; \d x = \langle F , v \rangle_{\H^{-1}_0 , \H^1} \quad (v \in \H^1 (\Omega ; \IC^d)).
\end{align}
Finally, notice that $\L^2 (\Omega ; \IC^d)$ naturally embeds into $\H^{-1} (\Omega ; \IC^d)$ and $\H^{-1}_0 (\Omega ; \IC^d)$. If $F_1$ denotes the functional in $\H^{-1} (\Omega ; \IC^d)$ identified with $f \in \L^2 (\Omega ; \IC^d)$ and if $F_2$ denotes its identification with an element in $\H^{-1}_0 (\Omega ; \IC^d)$, we find by virtue of~\eqref{Eq: Orthogonal range Helmholtz P} and~\eqref{Eq: Orthogonal range Helmholtz Q}
\begin{align*}
 \langle F_1 , v \rangle_{\H^{-1} , \H^1_0} = \int_{\Omega} f \cdot \overline{v} \; \d x = \int_{\Omega} \IP f \cdot \overline{v} \; \d x - \int_{\Omega} g_1 \, \overline{\divergence(v)} \; \d x \quad (v \in \H^1_0 (\Omega ; \IC^d))
\end{align*}
and
\begin{align}
\label{Eq: Neumann general f}
 \langle F_2 , v \rangle_{\H^{-1}_0 , \H^1} = \int_{\Omega} f \cdot \overline{v} \; \d x = \int_{\Omega} \IQ f \cdot \overline{v} \; \d x - \int_{\Omega} g_2 \, \overline{\divergence(v)} \; \d x \quad (v \in \H^1 (\Omega ; \IC^d))
\end{align}
for functions $g_1 \in \H^1 (\Omega)$ and $g_2 \in \H^1_0 (\Omega)$. Absorbing the functions $g_1$ and $g_2$, respectively, into the pressure functions, one finds the identities (by abusing the notation we write $f$ instead of $F_1$ and $F_2$)
\begin{align*}
 (\lambda + \cA)^{-1} f = (\lambda + A)^{-1} \IP f \qquad \text{and} \qquad (\lambda + \cB_{\mu})^{-1} f = (\lambda + B_{\mu})^{-1} \IQ f.
\end{align*}
Consequently, solving the Stokes resolvent problem with a right-hand side $f \in \L^2 (\Omega ; \IC^d)$ is the same as solving the Stokes resolvent problem with right-hand side $\IP f$ (or $\IQ f$, respectively) and one only changes the pressure by the gradient part inherent in $f$. \par
Given $F \in \H^{-1}_{\sigma} (\Omega)$, we say that $\phi$ is the associated pressure to~\eqref{Res} subject to~\eqref{Dir} with right-hand side $F$ if $\phi \in \L^2_0 (\Omega)$ and if $u := (\lambda + \cA)^{-1} F$ and $\phi$ satisfy~\eqref{Eq: Variational Dirichlet}. Analogously, we proceed for Neumann-type boundary conditions but with the relation~\eqref{Eq: Variational Neumann} and without the requirement on the mean value. \par
For Neumann-type boundary conditions we have the following estimates on the pressure.

\begin{proposition}
\label{Prop: Pressure estimate Neumann}
Let $\Omega$ be a bounded Lipschitz domain such that $\Delta_D^{-1} : \L^2 (\Omega) \to \H^2 (\Omega)$ is bounded. Let $\theta \in (0 , \pi]$ and $\mu \in (-1 , 1]$. There exists a constant $C > 0$ such that for all $f \in \cL^2_{\sigma} (\Omega)$ and $\lambda \in \S_{\theta}$ the associated pressure $\phi \in \L^2 (\Omega)$ to~\eqref{Res} subject to~\eqref{Neu} with right-hand side $f$ satisfies
\begin{align}
\label{Eq: Neumann pressure L2}
 \lvert \lambda \rvert^{1 / 2} \| \phi \|_{\L^2 (\Omega)} \leq C \| f \|_{\cL^2_{\sigma} (\Omega)}.
\end{align}
Furthermore, there exists a constant $C > 0$ such that for all $F \in \H^{-1}_0 (\Omega ; \IC^d)$ and $\lambda \in \S_{\theta}$ the associated pressure $\phi \in \L^2 (\Omega)$ to~\eqref{Res} subject to~\eqref{Neu} and right-hand side $F$ satisfies
\begin{align}
\label{Eq: Neumann pressure negative}
 \| \phi \|_{\L^2 (\Omega)} \leq C \| F \|_{\H^{-1}_0 (\Omega ; \IC^d)}.
\end{align}
If $\Omega$ is bounded and convex and $\lvert \mu \rvert < 1$, the constants $C > 0$ depend at most on $d$, $\theta$, and $\mu$. If $\mu = 1$, the constants further depend on the Lipschitz character of $\Omega$ and its diameter.
\end{proposition}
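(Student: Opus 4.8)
The plan is to read the pressure estimate off directly from the variational identity~\eqref{Eq: Variational Neumann} by inserting a judiciously chosen gradient test field, whose construction is precisely what the hypothesis on $\Delta_D^{-1}$ provides. Fix $f \in \cL^2_{\sigma} (\Omega)$ and $\lambda \in \S_{\theta}$, let $\phi \in \L^2 (\Omega)$ be the associated pressure and $u := (\lambda + \cB_{\mu})^{-1} f$ the corresponding velocity. Since $\Delta_D^{-1} : \L^2 (\Omega) \to \H^2 (\Omega)$ is bounded, we may solve $\Delta \psi = \phi$ in $\Omega$ with $\psi = 0$ on $\partial \Omega$ and obtain $\psi \in \H^2 (\Omega)$ with $\| \psi \|_{\H^2 (\Omega)} \leq C \| \phi \|_{\L^2 (\Omega)}$; if $\Omega$ is convex, then even $\| \nabla^2 \psi \|_{\L^2 (\Omega)} \leq \| \Delta \psi \|_{\L^2 (\Omega)}$ by the integration-by-parts result of Grisvard, with an absolute constant. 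Then $v := \nabla \psi \in \H^1 (\Omega ; \IC^d)$ is an admissible test field in~\eqref{Eq: Variational Neumann} and satisfies $\divergence (v) = \Delta \psi = \phi$.

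The key point is a cancellation: since $\psi$ has vanishing trace on $\partial \Omega$ and both $u$ and $f$ are divergence free (in $\H^1$, resp.\@ distributionally), integration by parts gives $\int_{\Omega} u \cdot \overline{\nabla \psi} \; \d x = 0$ and $\int_{\Omega} f \cdot \overline{\nabla \psi} \; \d x = 0$. Hence~\eqref{Eq: Variational Neumann} with $v = \nabla \psi$ collapses to $\| \phi \|_{\L^2 (\Omega)}^2 = \int_{\Omega} \phi \, \overline{\divergence (v)} \; \d x = \fb_{\mu} (u , \nabla \psi)$. For $\lvert \mu \rvert < 1$ one estimates $\lvert \fb_{\mu} (u , \nabla \psi) \rvert \leq (1 + \lvert \mu \rvert) \| \nabla u \|_{\L^2 (\Omega)} \| \nabla^2 \psi \|_{\L^2 (\Omega)} \leq C \| \nabla u \|_{\L^2 (\Omega)} \| \phi \|_{\L^2 (\Omega)}$; for $\mu = 1$ one uses that $\nabla^2 \psi$ is symmetric, so that $\fb_1 (u , \nabla \psi) = \int_{\Omega} \{ D u + [D u]^{\top} \} : \overline{\nabla^2 \psi} \; \d x$ and thus $\lvert \fb_1 (u , \nabla \psi) \rvert \leq \| D u + [D u]^{\top} \|_{\L^2 (\Omega)} \| \nabla^2 \psi \|_{\L^2 (\Omega)} \leq C \| D u + [D u]^{\top} \|_{\L^2 (\Omega)} \| \phi \|_{\L^2 (\Omega)}$. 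Dividing by $\| \phi \|_{\L^2 (\Omega)}$ gives $\| \phi \|_{\L^2 (\Omega)} \leq C \| \nabla u \|_{\L^2 (\Omega)}$ (resp.\@ $\leq C \| D u + [D u]^{\top} \|_{\L^2 (\Omega)}$), and the gradient bound of Proposition~\ref{Prop: Resolvent}~\eqref{Prop: Neumann} (resp.\@~\eqref{Prop: Free}), namely $\| \nabla u \|_{\L^2 (\Omega)} \leq C \lvert \lambda \rvert^{- 1 / 2} \| f \|_{\cL^2_{\sigma} (\Omega)}$, yields~\eqref{Eq: Neumann pressure L2}.

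For~\eqref{Eq: Neumann pressure negative} one runs the same argument with $F \in \H^{-1}_0 (\Omega ; \IC^d)$ in place of $f$ and $u := (\lambda + \cB_{\mu})^{-1} F$; the velocity is still divergence free, so $\int_{\Omega} u \cdot \overline{\nabla \psi} \; \d x = 0$ as before, but now the right-hand side of~\eqref{Eq: Variational Neumann} contributes $\langle F , \nabla \psi \rangle_{\H^{-1}_0 , \H^1}$, which is controlled by $\| F \|_{\H^{-1}_0 (\Omega ; \IC^d)} \| \nabla \psi \|_{\H^1 (\Omega ; \IC^d)} \leq C \| F \|_{\H^{-1}_0 (\Omega ; \IC^d)} \| \phi \|_{\L^2 (\Omega)}$. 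Combining this with the form estimate gives $\| \phi \|_{\L^2 (\Omega)} \leq C ( \| \nabla u \|_{\L^2 (\Omega)} + \| F \|_{\H^{-1}_0 (\Omega ; \IC^d)} )$ (resp.\@ with $D u + [D u]^{\top}$), and one concludes with the bound $\| \nabla u \|_{\L^2 (\Omega)} \leq C \| F \|_{\cH^{-1}_{0 , \sigma} (\Omega)} \leq C \| F \|_{\H^{-1}_0 (\Omega ; \IC^d)}$ from Proposition~\ref{Prop: Resolvent} together with the embedding $\H^{-1}_0 (\Omega ; \IC^d) \hookrightarrow \cH^{-1}_{0 , \sigma} (\Omega)$. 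As for the constants: for convex $\Omega$ with $\lvert \mu \rvert < 1$ the only place where the geometry enters is the bound $\| \nabla^2 \psi \|_{\L^2 (\Omega)} \leq \| \phi \|_{\L^2 (\Omega)}$, which holds with an absolute constant by Grisvard, so $C$ depends only on $d$, $\theta$, $\mu$; for $\mu = 1$ the appeal to Proposition~\ref{Prop: Resolvent}~\eqref{Prop: Free} is what brings in the dependence on the Lipschitz character and the diameter.

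The step that needs care — and that makes the whole argument work — is the choice of test field: solving the auxiliary equation with the \emph{Dirichlet} Laplacian requires no mean-value normalization of $\phi$ and, more importantly, the homogeneous Dirichlet condition on $\psi$ is exactly what annihilates both the resolvent term $\lambda \int_{\Omega} u \cdot \overline{\nabla \psi} \; \d x$ (whose naive size $\lvert \lambda \rvert \cdot \lvert \lambda \rvert^{-1}$ would yield only the trivial bound $\| \phi \|_{\L^2 (\Omega)} \leq C \| f \|_{\L^2 (\Omega ; \IC^d)}$) and the data term, leaving behind just the bilinear term $\fb_\mu(u,\nabla\psi)$, which carries the correct $\lvert \lambda \rvert^{- 1 / 2}$ decay through the gradient of $u$. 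The only other thing to keep track of is that for $\mu = 1$ the resolvent estimate controls merely the symmetric gradient of $u$; this is harmless here, since $\nabla^2 \psi$ is symmetric and hence pairs with $D u$ only through $D u + [D u]^{\top}$.
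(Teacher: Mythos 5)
Your proof is correct and follows essentially the same route as the paper: the test field $v = \nabla\psi$ with $\Delta_D\psi = \phi$ is exactly the paper's choice $v = \nabla(-\Delta_D)^{-1}\phi$, the cancellation of the $\lambda$-term and the data term via the orthogonality of gradients of $\H^1_0$-functions to $\cL^2_{\sigma}(\Omega)$ is the same key step, and the treatment of $\mu=1$ via the symmetry of $\nabla^2\psi$ matches the paper's remark. No gaps.
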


\begin{proof}
To prove~\eqref{Eq: Neumann pressure L2} consider the test function $v := \nabla (- \Delta_D)^{-1} \phi$, which lies in the orthogonal space to $\cL^2_{\sigma} (\Omega)$ by~\eqref{Eq: Orthogonal range Helmholtz Q}. Thus, by~\eqref{Eq: Variational Neumann} and the boundedness property of the Laplacian, we infer in the case $\lvert \mu \rvert < 1$
\begin{align*}
 \int_{\Omega} \lvert \phi \rvert^2 \; \d x = \int_{\Omega} \phi \, \overline{\divergence(v)} \; \d x = \int_{\Omega} a_{j k}^{\alpha \beta} (\mu) \partial_k u_{\beta} \overline{\partial_j v_{\alpha}} \; \d x \leq C \| \nabla u \|_{\L^2 (\Omega ; \IC^{d^2})} \| \phi \|_{\L^2 (\Omega)}.
\end{align*}
If $\mu = 1$ one obtains the same but with $\| \nabla u \|_{\L^2 (\Omega ; \IC^{d^2})}$ replaced by $\| Du + [Du]^{\top} \|_{\L^2 (\Omega ; \IC^{d \times d})}$. The estimate is concluded by dividing by $\| \phi \|_{\L^2 (\Omega)}$ and by employing Proposition~\ref{Prop: Resolvent}~\eqref{Prop: Neumann} or~\eqref{Prop: Free}. \par
To establish~\eqref{Eq: Neumann pressure negative} use the same test function. The only difference to the calculation above is the behavior in $\lambda$ of the terms $\| \nabla u \|_{\L^2 (\Omega ; \IC^{d^2})}$ and $\| Du + [Du]^{\top} \|_{\L^2 (\Omega ; \IC^{d \times d})}$ and the fact, that $\langle F , v \rangle_{\H^{-1}_0 , \H^1}$ does not vanish. However, it is estimated by the boundedness assumption of the Laplacian as
\begin{align*}
 \lvert \langle F , v \rangle_{\H^{-1}_0 , \H^1} \rvert \leq \| F \|_{\H^{-1}_0 (\Omega ; \IC^d)} \| v \|_{\H^1 (\Omega ; \IC^d)} \leq C \| F \|_{\H^{-1}_0 (\Omega ; \IC^d)} \| \phi \|_{\L^2 (\Omega)}
\end{align*}
and the term $\| \phi \|_{\L^2 (\Omega)}$ is handled again by division. \par
Concerning the dependence of $C$ on the quantities $d$, $\theta$, and $\mu$, notice that the only critical quantity is the operator norm of $\nabla^2 \Delta_D^{-1}$ on $\L^2$. That this is bounded by a constant depending only on $d$ follows by~\cite[Eq.~(3.1.2.2)]{Grisvard}.
\end{proof}

\begin{remark}
\label{Rem: No pressure decay}
\begin{enumerate}
 \item Notice that~\eqref{Eq: Neumann pressure L2} cannot hold if $f \in \L^2 (\Omega ; \IC^d) \setminus \cL^2_{\sigma} (\Omega)$ since in this case the pressure part $g_2$ defined in~\eqref{Eq: Neumann general f} does not vanish. This gives a contribution that does not even depend on $\lambda$.
 \item Since $\nabla u$ and the pressure are connected via the imposed boundary condition in~\eqref{Neu}, it seems natural that the pressure and $\nabla u$ both have the same behavior in the resolvent parameter $\lambda$.
\end{enumerate}
\end{remark}

To find out how the corresponding estimates for the Stokes resolvent problem subject to no-slip boundary conditions look like will occupy the rest of this section. Notice that the following proposition was proven (also in the $\L^p$-situation) on bounded and smooth domains in~\cite[Lem.~13]{Noll_Saal} and on bounded Lipschitz domains in~\cite[Prop.~4.3]{Tolksdorf_Watanabe}.

\begin{proposition}
\label{Prop: Pressure estimate Dirichlet}
Let $\Omega$ be a bounded Lipschitz domain and $\theta \in (0 , \pi]$. For all $0 \leq \alpha < 1 / 4$, there exists a constant $C > 0$ such that for all $f \in \L^2_{\sigma} (\Omega)$ and $\lambda \in \S_{\theta}$ the associated pressure $\phi \in \L^2_0 (\Omega)$ to~\eqref{Res} subject to~\eqref{Dir} with right-hand side $f$ satisfies
\begin{align*}
 \min\{ 1 , \lvert \lambda \rvert^{\alpha} \} \| \phi \|_{\L^2 (\Omega)} \leq C \| f \|_{\L^2_{\sigma} (\Omega)}.
\end{align*}
\end{proposition}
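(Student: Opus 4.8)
The plan is to estimate the pressure $\phi$ by a duality argument, testing the variational identity~\eqref{Eq: Variational Dirichlet} against a velocity field that is \emph{not} solenoidal. Given $\phi \in \L^2_0(\Omega)$, I would choose the test function $v := B\phi$, where $B$ is the Bogovski\u{\i} operator, so that $\divergence(v) = \phi$ and $v \in \H^1_0(\Omega;\IC^d)$. Plugging this into~\eqref{Eq: Variational Dirichlet} (with $F$ replaced by $f \in \L^2_\sigma(\Omega)$, so $\langle F, v\rangle = \int_\Omega f\cdot\overline{v}$) gives
\begin{align*}
 \int_\Omega |\phi|^2 \; \d x = \lambda \int_\Omega u\cdot\overline{v} \; \d x + \int_\Omega \nabla u \cdot \overline{\nabla v} \; \d x - \int_\Omega f\cdot\overline{v} \; \d x.
\end{align*}
The term $\|\nabla v\|_{\L^2}$ is bounded by $C\|\phi\|_{\L^2}$, and $\|v\|_{\L^2} \le C\|v\|_{\H^1_0} \le C\|\phi\|_{\L^2}$. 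So the naive estimate yields
\begin{align*}
 \|\phi\|_{\L^2(\Omega)}^2 \le C\big(|\lambda|\,\|u\|_{\L^2} + \|\nabla u\|_{\L^2} + \|f\|_{\L^2}\big)\|\phi\|_{\L^2(\Omega)},
\end{align*}
and dividing by $\|\phi\|_{\L^2}$ together with Proposition~\ref{Prop: Resolvent}~\eqref{Prop: Dirichlet} (which controls $|\lambda|\|u\|_{\L^2}$ by $\|f\|_{\L^2}$ and $\|\nabla u\|_{\L^2}$ by $|\lambda|^{-1/2}\|f\|_{\L^2}$) only gives a $\lambda$-independent bound $\|\phi\|_{\L^2} \le C\|f\|_{\L^2}$. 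This reproduces the trivial case $\alpha = 0$ but not the decay.

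To extract the genuine decay rate $\min\{1,|\lambda|^\alpha\}$ for $\alpha < 1/4$, the key idea is to test not against $B\phi$ but against a \emph{smoother} correction that exploits the extra fractional regularity of $u$ near the boundary, or equivalently to interpolate. I would proceed as follows. For $|\lambda| \le 1$ there is nothing to prove since $\min\{1,|\lambda|^\alpha\} = |\lambda|^\alpha \le 1$ and the trivial bound suffices; so assume $|\lambda| \ge 1$ and we must prove $|\lambda|^\alpha\|\phi\|_{\L^2} \le C\|f\|_{\L^2}$. The pressure satisfies $\nabla\phi = f - \lambda u + \Delta u$ in $\H^{-1}(\Omega;\IC^d)$, hence $\phi = (-\Delta_D)^{-1}\divergence(\lambda u - \Delta u - f + \text{(solenoidal part absorbed)})$; more cleanly, since $\Delta u \in \H^{-1}$ and $\phi$ has mean zero, one has $\|\phi\|_{\L^2} \le C\|\nabla\phi\|_{\H^{-1}}$ is the wrong direction, so instead I would use that $\phi$ is controlled by $f - \lambda u$ modulo $\Delta u$: write $\phi = (-\Delta_D)^{-1}\divergence(f - \lambda u)$ using that $\divergence u = 0$ implies $\divergence\Delta u = \Delta\divergence u = 0$, so $\Delta u$ contributes nothing to the divergence. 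Then $\|\phi\|_{\L^2} = \|(-\Delta_D)^{-1}\divergence(f-\lambda u)\|_{\L^2} \le C\|f - \lambda u\|_{\H^{-1}}$. Now $\|f\|_{\H^{-1}} \le C\|f\|_{\L^2}$, but that loses the decay on $f$; instead use $\|f\|_{\H^{-1}} \le C\|f\|_{\L^2}$ is acceptable only if we can beat it with the $\lambda$ factor on $u$... The real point: $\lambda u = f - \cA u$ in $\H^{-1}_\sigma$, so $\|f - \lambda u\|_{\H^{-1}_\sigma} = \|\cA u\|_{\H^{-1}_\sigma} = \|\nabla u\|_{\L^2}$-ish, giving again only $|\lambda|^{-1/2}$ if we could pass to the full $\H^{-1}$; but $\cA u \in \H^{-1}_\sigma$ need not extend to $\H^{-1}(\Omega;\IC^d)$ boundedly, and \emph{this gap is exactly the pressure}. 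The correct route is the fractional one: estimate $\|f - \lambda u\|_{\H^{-1}(\Omega;\IC^d)}$ by interpolating between $\|\cdot\|_{\H^{-1}_\sigma}$ (controlled) and $\|\cdot\|_{\L^2}$, using the fractional power domain description~\eqref{Eq: Fractional power domains on H-1} and~\eqref{Eq: Fractional power domains on L2}, which only allow $\H^{s}$-regularity of $u$ for $s < 1/2$ up to the boundary — this is the source of the threshold $1/4$.

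Concretely: since $u \in \dom(A^{1/2+\alpha})$ fails but $u = (\lambda+A)^{-1}f \in \dom(A^{1/2+\alpha})$ does hold with norm $\lesssim |\lambda|^{\alpha-1/2}\|f\|_{\L^2}$ for $\alpha < 1/2$ by~\eqref{Eq: Fractional parabolic smoothing}, and $\dom(A^{1/2+\alpha}) \hookrightarrow \H^{1+2\alpha}_{?}$ only for $\alpha<1/4$ (so that $1+2\alpha < 3/2$ and the boundary condition does not obstruct), we get $\|\nabla^2 u\|_{?}$-type control of order $|\lambda|^{\alpha-1/2}$; then $\phi = (-\Delta_D)^{-1}\divergence(-\Delta u)$ restricted appropriately... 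Actually the cleanest: $\nabla\phi = \Delta u + f - \lambda u$, and by elliptic regularity for $(-\Delta_D)^{-1}$ plus~\eqref{Eq: Fractional power domains on L2}, one shows $\|\phi\|_{\L^2} \le C\|u\|_{\H^{3/2-\eps}} \le C\|A^{3/4-\eps/2}u\|_{\L^2} \le C|\lambda|^{-(1/4+\eps/2)}\|f\|_{\L^2}$ by the resolvent estimate $\|A^\beta(\lambda+A)^{-1}\| \le C|\lambda|^{\beta-1}$ combined with $\|(\lambda+A)^{-1}\|\le C|\lambda|^{-1}$ — wait, we need $\|A^\beta(\lambda+A)^{-1}f\|_{\L^2} \le C|\lambda|^{\beta-1}\|f\|_{\L^2}$, which with $\beta = 3/4-\eps/2$ gives exactly $|\lambda|^{-1/4-\eps/2}$, i.e. $\alpha = 1/4 - \eps/2$, matching $\alpha<1/4$.

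\textbf{The main obstacle} I anticipate is making precise the elliptic-regularity step "$\|\phi\|_{\L^2} \le C\|u\|_{\H^{2\alpha+1/2+\text{something}}}$ near the boundary'' — more precisely, relating the $\L^2$-norm of the pressure to a fractional Sobolev norm of $u$ that the fractional-power calculus can control, on a merely Lipschitz domain where $\Delta_D^{-1}:\L^2\to\H^2$ need \emph{not} hold. On a general Lipschitz domain one cannot simply invoke $\H^2$-elliptic regularity; instead one must route through the Helmholtz projection / the identity $\nabla\phi = (\Id-\IP)(\Delta u)$ in $\H^{-1}$, use that $\|(\Id - \IP)g\|_{\H^{-1}} \le \|g\|_{\H^{-1}}$, and then estimate $\|\Delta u\|_{\H^{-1+2\alpha}_{\text{loc}}}$-type quantities via the fractional domain characterization~\eqref{Eq: Fractional power domains on L2}, exploiting that for $\alpha < 1/4$ the space $\dom(A^{1/2+\alpha})$ still coincides with a clean Sobolev space $\H^{1+2\alpha}_{0,\sigma}$ without trace obstructions. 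The bookkeeping of which negative-order Sobolev duality pairing is legitimate on a Lipschitz domain, and matching it with the scaling factor $|\lambda|^{\beta-1}$ from $\|A^\beta(\lambda+A)^{-1}\|$, is where the argument needs care; everything else is a short computation using~\eqref{Eq: Variational Dirichlet}, Proposition~\ref{Prop: Resolvent}, and~\eqref{Eq: Fractional parabolic smoothing}.
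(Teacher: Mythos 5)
Your setup (testing \eqref{Eq: Variational Dirichlet} with $v:=B\phi$, observing that the naive bound only yields $\alpha=0$) coincides with the paper's, but the step that actually produces the decay $\lvert\lambda\rvert^{-\alpha}$ is missing, and the route you sketch in its place does not close. The paper's key move is purely a duality/fractional-power trick applied to the remaining term $\langle Au, B\phi\rangle$ (after writing $-\Delta u+\nabla\phi=Au$): since $Au=\IP Au$ and $A,\IP$ are self-adjoint, one writes $\langle Au,B\phi\rangle=\langle A^{1-\alpha}u, A^{\alpha}\IP B\phi\rangle$, bounds $\|A^{1-\alpha}(\lambda+A)^{-1}f\|_{\L^2}\le C\lvert\lambda\rvert^{-\alpha}\|f\|_{\L^2}$ by the moment inequality, and uses that $\IP$ maps $\H^{2\alpha}(\Omega;\IC^d)$ boundedly into $\dom(A^{\alpha})$ precisely when $\alpha<1/4$ (because $\dom(A^{\alpha})=\H^{2\alpha}_{\sigma}(\Omega)$ carries no trace condition there, cf.\ \eqref{Eq: Fractional power domains on L2}), so that $\|A^{\alpha}\IP B\phi\|_{\L^2}\le C\|\phi\|_{\L^2}$. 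All fractional powers of order $>1/2$ are thereby avoided, which is essential on a merely Lipschitz domain.

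Your alternative chain $\|\phi\|_{\L^2}\le C\|u\|_{\H^{3/2-\eps}}\le C\|A^{3/4-\eps/2}u\|_{\L^2}\le C\lvert\lambda\rvert^{-(1/4+\eps/2)}\|f\|_{\L^2}$ has two fatal problems. First, neither link is available here: the paper's characterization \eqref{Eq: Fractional power domains on L2} of $\dom(A^{\beta})$ stops at $\beta<1/2$ on Lipschitz domains, and the inequality $\|\phi\|_{\L^2}\le C\|u\|_{\H^{3/2-\eps}}$ is exactly the unproved ``elliptic regularity'' step you yourself flag as the main obstacle --- the pressure is an independent unknown and is not controlled by a Sobolev norm of $u$ alone via any result in the paper. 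Second, your own arithmetic betrays the error: $\lvert\lambda\rvert^{\beta-1}$ with $\beta=3/4-\eps/2$ is $\lvert\lambda\rvert^{-1/4-\eps/2}$, i.e.\ a decay rate $\alpha=1/4+\eps/2>1/4$, not $1/4-\eps/2$ as you claim; such an estimate is impossible by Proposition~\ref{Prop: Sharpness on L2}, so at least one link in the chain must fail even on smooth domains. The earlier detour via $\phi=(-\Delta_D)^{-1}\divergence(f-\lambda u)$ is also vacuous, since $f-\lambda u$ is solenoidal and the right-hand side is then zero (the pressure is harmonic and determined by boundary data, which this representation ignores). In short: the idea of trading $\lvert\lambda\rvert$-decay for fractional regularity is the right instinct, but the derivative count must be split as $A^{1-\alpha}$ on $u$ and $A^{\alpha}$ on $\IP B\phi$, with the threshold $1/4$ entering through the mapping property of the Helmholtz projection, not through a regularity estimate for $\phi$ in terms of $u$.
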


\begin{proof}
Let $f \in \L^2_{\sigma} (\Omega)$ and $u \in \dom(A)$ with $\lambda u + A u = f$. Notice that by~\eqref{Eq: Variational Dirichlet} it holds $- \Delta u + \nabla \phi = A u$ in the sense of distributions. Let $B : \L^2_0 (\Omega) \to \H^1_0 (\Omega ; \IC^d)$ denote the Bogovski\u{\i} operator and define the test function $v := B \phi$. Then
\begin{align*}
 \int_{\Omega} \lvert \phi \rvert^2 \; \d x = \int_{\Omega} \phi \, \overline{\divergence(B \phi)} \; \d x = \langle \nabla u , \nabla B \phi \rangle_{\L^2 , \L^2} - \langle A u , B \phi \rangle_{\L^2 , \L^2}.
\end{align*}
The first term on the right-hand side is estimated by the boundedness of $B$ and by Proposition~\ref{Prop: Resolvent}~\eqref{Prop: Dirichlet} as
\begin{align*}
 \lvert \langle \nabla u , \nabla B \phi \rangle_{\L^2 , \L^2} \rvert \leq C \lvert \lambda \rvert^{- 1 / 2} \| f \|_{\L^2_{\sigma} (\Omega)} \| \phi \|_{\L^2 (\Omega)}.
\end{align*}
To bound the second term, use that $A u = \IP A u$, that $\IP$ is self-adjoint, and that $\IP$ maps the Bessel potential space $\H^{2 \alpha} (\Omega ; \IC^d)$ boundedly into $\dom(A^{\alpha})$ whenever $0 \leq \alpha < 1 / 4$ (this follows by combining~\cite[Prop.~2.16]{Mitrea_Monniaux} with~\cite[Thm.~5.1]{Mitrea_Monniaux}). Thus, for $0 \leq \alpha < 1 / 4$ it holds
\begin{align*}
 \lvert \langle A u , B \phi \rangle_{\L^2 , \L^2} \rvert = \lvert \langle A^{1 - \alpha} u , A^{\alpha} \IP B \phi \rangle_{\L^2 , \L^2} \rvert \leq C \lvert \lambda \rvert^{- \alpha} \| f \|_{\L^2_{\sigma} (\Omega)} \| \phi \|_{\L^2 (\Omega)}.
\end{align*}
Notice that the estimate on $A^{1 - \alpha} u$ follows by writing $u = (\lambda + A)^{-1} f$ and by using the moment inequality~\cite[Prop.~6.6.4]{Haase}. \par
For the improved inequality for small $\lambda$, use the invertibility of the Stokes operator and estimate
\begin{align*}
  \langle \nabla A^{-1} A u , \nabla B \phi \rangle_{\L^2 , \L^2} - \langle A u , B \phi \rangle_{\L^2 , \L^2} \leq C \| A u \|_{\L^2_{\sigma} (\Omega)} \| \phi \|_{\L^2 (\Omega)} \leq C \| f \|_{\L^2_{\sigma} (\Omega)} \| \phi \|_{\L^2 (\Omega)}. &\qedhere
\end{align*}
\end{proof}

Comparing this estimate with the corresponding estimate for Neumann-type boundary conditions, one sees that there is a lack of an exponent of $1 / 4$ in the decay rate as $\lvert \lambda \rvert \to \infty$. As the proof for the decay estimate for no-slip boundary conditions relied on the construction of an appropriate test function, one might wonder whether the test function was just a ``bad choice'' and whether one could do better by choosing a more subtle test function. The following proposition shows that this is not the case, i.e., that the decay rate above is optimal.

\begin{proposition}
\label{Prop: Sharpness on L2}
Let $\Omega$ be a bounded domain with $\C^4$-boundary, $\theta \in (\pi / 2 , \pi)$, and $\alpha > 1 / 4$. Then for all $n \in \IN$ there exist $f_n \in \L^2_{\sigma} (\Omega)$ and $\lambda_n \in \S_{\theta}$ with $\lvert \lambda_n \rvert \geq 1$ such that the to~\eqref{Res} subject to~\eqref{Dir} with right-hand side $f_n$ associated pressure $\phi_n \in \L^2_0 (\Omega)$ satisfies
\begin{align*}
 \lvert \lambda_n \rvert^{\alpha} \| \phi_n \|_{\L^2 (\Omega)} > n \| f_n \|_{\L^2_{\sigma} (\Omega)}.
\end{align*}
\end{proposition}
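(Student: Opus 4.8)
The plan is to argue by contradiction and to pin the obstruction to the jump at $\alpha=1/4$ in the scale $\dom(A^{\alpha})$ from~\eqref{Eq: Fractional power domains on L2}, together with the failure of the Helmholtz projection to preserve vanishing boundary values (Lemma~\ref{Lem: Trace of Helmholtz}). So I would assume, towards a contradiction, that for the given $\alpha>1/4$ (we may take $\alpha<1/2$) there is $C>0$ with $\lvert\lambda\rvert^{\alpha}\lVert\phi\rVert_{\L^2(\Omega)}\le C\lVert f\rVert_{\L^2_{\sigma}(\Omega)}$ for all $f\in\L^2_{\sigma}(\Omega)$ and all $\lambda\in\S_{\theta}$ with $\lvert\lambda\rvert\ge1$. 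For $u\in\dom(A)$ write $\Pi u\in\L^2_0(\Omega)$ for the mean-zero function with $\int_{\Omega}\Pi u\,\overline{\divergence(v)}\,\d x=\int_{\Omega}\nabla u\cdot\overline{\nabla v}\,\d x-\int_{\Omega}Au\cdot\overline{v}\,\d x$ for all $v\in\H^1_0(\Omega;\IC^d)$; by~\eqref{Eq: Variational Dirichlet}, $\Pi u$ is precisely the pressure associated to the right-hand side $f:=(\lambda+A)u\in\L^2_{\sigma}(\Omega)$, so the hypothesis becomes $\lvert\lambda\rvert^{\alpha}\lVert\Pi u\rVert_{\L^2}\le C\lVert(\lambda+A)u\rVert_{\L^2}$ for every $u\in\dom(A)$ and $\lambda\in\S_{\theta}$, $\lvert\lambda\rvert\ge1$.

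The next step would be to specialise this to spectrally localised $u$. Since $A$ is self-adjoint with $A\ge\mu_1>0$, it has a spectral resolution; for $u^{(j)}:=\ind_{[2^j,2^{j+1})}(A)u$ with $j\ge0$, picking any $\lambda\in\S_{\theta}$ with $\lvert\lambda\rvert=2^j$ gives $\lVert(\lambda+A)u^{(j)}\rVert\le3\cdot2^j\lVert u^{(j)}\rVert$, hence $\lVert\Pi u^{(j)}\rVert_{\L^2}\le3C\,2^{j(1-\alpha)}\lVert u^{(j)}\rVert_{\L^2}$. I would then pair the defining identity for $\Pi u$ with the Bogovski\u{\i} operator $B$: taking $v=B\psi$ with $\psi\in\L^2_0(\Omega)$, using $\divergence(B\psi)=\psi$ and $Au\in\L^2_{\sigma}(\Omega)$ (so $\int_{\Omega}Au\cdot\overline{B\psi}=\int_{\Omega}Au\cdot\overline{\IP B\psi}$), one gets
\[
 \int_{\Omega}Au\cdot\overline{\IP B\psi}\,\d x=\int_{\Omega}\nabla u\cdot\overline{\nabla B\psi}\,\d x-\int_{\Omega}\Pi u\,\overline{\psi}\,\d x .
\]
For $u=u^{(j)}$, using the boundedness of $B:\L^2_0(\Omega)\to\H^1_0(\Omega;\IC^d)$, the estimate $\lVert\nabla u^{(j)}\rVert=\lVert A^{1/2}u^{(j)}\rVert\le2^{(j+1)/2}\lVert u^{(j)}\rVert$, the bound on $\lVert\Pi u^{(j)}\rVert$, and $\alpha<1/2$, this yields $\bigl\lvert\int_{\Omega}Au^{(j)}\cdot\overline{\IP B\psi}\,\d x\bigr\rvert\le C'\,2^{j(1-\alpha)}\lVert u^{(j)}\rVert_{\L^2}\lVert\psi\rVert_{\L^2}$ for $j\ge0$, with $C'$ independent of $j,u,\psi$.

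Now I would feed in the spectral pieces of a Helmholtz image with non-vanishing trace. By Lemma~\ref{Lem: Trace of Helmholtz} there is $u_0\in\H^1_0(\Omega;\IC^d)$ with $\tr(\IP u_0)\ne0$; set $\psi:=\divergence(u_0)\in\L^2_0(\Omega)$ and $w:=\IP B\psi\in\H^1(\Omega;\IC^d)\cap\L^2_{\sigma}(\Omega)$. Since $u_0-B\psi\in\H^1_{0,\sigma}(\Omega)\subset\L^2_{\sigma}(\Omega)$ one has $\IP u_0=w+(u_0-B\psi)$, so $\tr w=\tr(\IP u_0)\ne0$. Apply the last display with $u:=u_j:=A^{\alpha-1}w_j\in\dom(A)$, where $w_j:=\ind_{[2^j,2^{j+1})}(A)w$ and $a_j:=\lVert w_j\rVert_{\L^2}^2$; this $u_j$ is spectrally supported in $[2^j,2^{j+1})$ with $\lVert u_j\rVert\le2^{j(\alpha-1)}\sqrt{a_j}$, and $Au_j=A^{\alpha}w_j$ is supported there too. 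By orthogonality of the spectral pieces, $\int_{\Omega}Au_j\cdot\overline{w}\,\d x=\int_{\Omega}A^{\alpha}w_j\cdot\overline{w_j}\,\d x\ge2^{j\alpha}a_j$, while the display bounds the left-hand side by $C'\,2^{j(1-\alpha)}2^{j(\alpha-1)}\sqrt{a_j}\,\lVert\psi\rVert=C'\sqrt{a_j}\,\lVert\psi\rVert$. Hence $a_j\le(C'\lVert\psi\rVert)^2\,2^{-2j\alpha}$ for all $j\ge0$.

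To finish, I would fix $s$ with $1/4<s<\alpha$ (possible precisely because $\alpha>1/4$). The low-energy part of $w$ contributes only $\lesssim\lVert w\rVert_{\L^2}^2$ to $\lVert A^s w\rVert_{\L^2}^2$, while $\sum_{j\ge0}2^{2js}a_j\le(C'\lVert\psi\rVert)^2\sum_{j\ge0}2^{2j(s-\alpha)}<\infty$; thus $w\in\dom(A^s)$. By~\eqref{Eq: Fractional power domains on L2}, $\dom(A^s)=\H^{2s}_{0,\sigma}(\Omega)$, and since $2s>1/2$ every element of this space has vanishing trace on $\partial\Omega$, so $\tr w=0$ — contradicting $\tr w\ne0$. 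This contradiction yields the proposition (the $f_n$, $\lambda_n$ being extracted from the failure of the uniform bound). I expect the main obstacle to be the two identities of the middle step — correctly identifying $\Pi u$ with the resolvent pressure for a judiciously chosen $\lambda$, and setting up the Bogovski\u{\i} testing identity — together with the spectral bookkeeping; the conceptual point driving everything is that the extremal data for the pressure estimate are assembled from the high-frequency spectral parts of an $\IP B\psi$ with non-trivial trace, which is exactly where the $1/4$-threshold in~\eqref{Eq: Fractional power domains on L2} becomes active.
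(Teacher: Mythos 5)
Your argument is correct, but it takes a genuinely different route from the paper's. The paper also argues by contradiction and also ends at Lemma~\ref{Lem: Trace of Helmholtz}, but it gets there through semigroup theory: it dualizes the assumed pressure bound into a decay estimate for $A(\lambda+A)^{-1}\IP$, converts that via the Cauchy integral~\eqref{Eq: Cauchy integrals} into $t^{1-\alpha}\|A\e^{-tA}\IP g\|\le C\|g\|_{\H^1_0}$, then controls $(tA)^n\e^{-tA}\IP$ with factorial bounds, resums the Taylor series of the analytic semigroup to get uniform boundedness of $\e^{-tA}\IP$ from $\H^1_0$ into $\H^{2\alpha}$, and finally uses Rellich--Kondrachov plus the continuity of the trace on $\H^{2\alpha-\eps}$ to force $\tr(\IP g)=0$. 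You instead exploit self-adjointness of $A$ directly: the observation that the pressure $\Pi u$ depends only on $u$ (not on $\lambda$) lets you optimize over $\lambda$ on each dyadic spectral band, and the Bogovski\u{\i} testing identity converts the resulting band-wise pressure bounds into the decay $\|\ind_{[2^j,2^{j+1})}(A)w\|\lesssim 2^{-j\alpha}$ for $w=\IP B\psi$, whence $w\in\dom(A^s)=\H^{2s}_{0,\sigma}(\Omega)$ for some $s\in(1/4,\alpha)$ by~\eqref{Eq: Fractional power domains on L2} and so $\tr w=0$ --- the same contradiction. Your route is shorter and avoids the factorial/Taylor-series machinery and the compactness step, at the price of leaning on the spectral theorem (so it would not survive in a non-self-adjoint setting, where the paper's contour-integral argument still runs) and on the Mitrea--Monniaux identification $\dom(A^s)=\H^{2s}_{0,\sigma}(\Omega)$, which the paper's own proof of this proposition manages to avoid (it only uses the elementary interpolation inequality between $\L^2$ and $\H^1$). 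All the individual steps check out: the identification of $\Pi u$ with the resolvent pressure via~\eqref{Eq: Variational Dirichlet}, the choice $\lambda=2^j\in\S_\theta$ (legitimate since $\theta>\pi/2$), the use of $\|\nabla u^{(j)}\|=\|A^{1/2}u^{(j)}\|$, the identity $\tr(\IP B\psi)=\tr(\IP u_0)$ via $u_0-B\psi\in\H^1_{0,\sigma}(\Omega)$, and the square-function summation. This is a valid alternative proof.
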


\begin{proof}
We argue by contradiction and assume without loss of generality that $1 / 4 < \alpha \leq 1 / 2$. Assume that there exists $C > 0$ such that for all $f \in \L^2_{\sigma} (\Omega)$, $\lambda \in \S_{\theta}$ with $\lvert \lambda \rvert \geq 1$, and the to~\eqref{Res} and~\eqref{Dir} associated pressure $\phi$ it holds
\begin{align}
\label{Eq: First contradiction estimate}
 \lvert \lambda \rvert^{\alpha} \| \phi \|_{\L^2 (\Omega)} \leq C \| f \|_{\L^2_{\sigma} (\Omega)}.
\end{align}
Let $u \in \dom(A)$ with $\lambda u + A u = f$ and notice by~\eqref{Eq: Variational Dirichlet} that $- \Delta u + \nabla \phi = A u$ holds in the sense of distributions. Employing Proposition~\ref{Prop: Resolvent}~\eqref{Prop: Dirichlet} and~\eqref{Eq: First contradiction estimate} it follows
\begin{align*}
 \lvert \lambda \rvert^{\alpha} \| A u \|_{\H^{-1} (\Omega ; \IC^d)} \leq \lvert \lambda \rvert^{\alpha} \| \nabla u \|_{\L^2 (\Omega ; \IC^{d^2})} + \lvert \lambda \rvert^{\alpha} \| \phi \|_{\L^2 (\Omega)} \leq C \| f \|_{\L^2_{\sigma} (\Omega)}.
\end{align*}
By duality, there exists $C > 0$ such that for all $g \in \H^1_0 (\Omega ; \IC^d)$ and $\lambda \in \S_{\theta}$ with $\lvert \lambda \rvert \geq 1$ it holds 
\begin{align}
\label{Eq: Decay resolvent estimate}
 \lvert \lambda \rvert^{\alpha} \| A (\lambda + A)^{-1} \IP g \|_{\L^2_{\sigma} (\Omega)} \leq C \| g \|_{\H^1_0 (\Omega ; \IC^d)}.
\end{align}
Similarly to~\cite[Prop.~3.7]{Tolksdorf_Watanabe}, use~\eqref{Eq: Decay resolvent estimate} and~\eqref{Eq: Cauchy integrals}, to deduce a semigroup estimate of the form
\begin{align}
\label{Eq: Decay semigroup estimate}
 t^{1 - \alpha} \| A \e^{- t A} \IP g \|_{\L^2_{\sigma} (\Omega)} \leq C \| g \|_{\H^1_0 (\Omega)} \qquad (0 < t \leq 1).
\end{align}
Next, we estimate for a natural number $n \in \IN$ the term $(t A)^n \e^{- t A} \IP g$. To this end, write
\begin{align}
\label{Eq: Semigroup decomposition}
 (t A)^n \e^{- t A} \IP g = t^{\alpha} \e^{- \frac{1}{n + 1} t A} (t A \e^{- \frac{1}{n + 1} t A})^{n - 1} t^{1 - \alpha} A \e^{- \frac{1}{n + 1} t A} \IP g.
\end{align}
The first semigroup term in the product on the right-hand side is estimated by a combination of the interpolation inequality $\| \cdot \|_{\H^{2 \alpha}} \leq C \| \cdot \|_{\L^2}^{1 - 2 \alpha} \| \nabla \cdot \|_{\L^2}^{2 \alpha}$ with the uniform boundedness of the semigroup $\e^{-t A}$ as a family on $\L^2_{\sigma} (\Omega)$ and the gradient estimate~\eqref{Eq: Gradient estimates semigroup} as
\begin{align}
\label{Eq: First term in product}
\begin{aligned}
 t^{\alpha} \| \e^{- \frac{1}{n + 1} t A} h \|_{\H^{2 \alpha} (\Omega ; \IC^d)} &\leq C \| \e^{- \frac{1}{n + 1} t A} h \|_{\L^2_{\sigma} (\Omega)}^{1 - 2 \alpha} \Big( t^{1 / 2} \| \nabla \e^{- \frac{1}{n + 1} t A} h \|_{\L^2 (\Omega ; \IC^{d^2})} \Big)^{2 \alpha} \\
 &\leq C (n + 1)^{\alpha} \| h \|_{\L^2_{\sigma} (\Omega)}.
\end{aligned}
\end{align}
This holds for all $h \in \L^2_{\sigma} (\Omega)$. The term in the center of the product on the right-hand side of~\eqref{Eq: Semigroup decomposition} is estimated by~\eqref{Eq: Real characterization} by
\begin{align}
\label{Eq: Second term in product}
 \| (t A \e^{- \frac{1}{n + 1} t A})^{n - 1} h \|_{\L^2_{\sigma}} \leq (C (n + 1))^{n - 1} \| h \|_{\L^2_{\sigma} (\Omega)} \qquad (h \in \L^2_{\sigma} (\Omega)).
\end{align}
Finally, the last term in~\eqref{Eq: Semigroup decomposition} is estimated by using~\eqref{Eq: Decay semigroup estimate} yielding
\begin{align}
\label{Eq: Third term in product}
 \| t^{1 - \alpha} A \e^{- \frac{1}{n + 1} t A} \IP g \|_{\L^2_{\sigma} (\Omega)} \leq C (n + 1)^{1 - \alpha} \| g \|_{\H^1_0 (\Omega ; \IC^d)}.
\end{align}
Combining~\eqref{Eq: Semigroup decomposition},~\eqref{Eq: First term in product},~\eqref{Eq: Second term in product}, and~\eqref{Eq: Third term in product} and using that $n^n \leq n ! \e^n$ (Stirling formula!) finally yields
\begin{align}
\label{Eq: Estimate of powers}
 \| (t A)^n \e^{- t A} \IP g \|_{\H^{2 \alpha} (\Omega ; \IC^d)} \leq (C (n + 1))^{n + 1} \| g \|_{\H^1_0 (\Omega ; \IC^d)} \leq (n + 1)! (C \e)^{n + 1} \| g \|_{\H^1_0 (\Omega ; \IC^d)}.
\end{align}
To proceed, let $0 < t \leq 1$ and $s \in \IR$ with $\lvert s \rvert$ being small enough. Since $\e^{- t A}$ is an analytic semigroup, it can be written by its Taylor expansion
\begin{align*}
 \e^{- (t + s) A} = \sum_{n = 0}^{\infty} \frac{(-1)^n s^n}{n !} A^n \e^{- t A}.
\end{align*}
Combining this with~\eqref{Eq: Estimate of powers} finally yields if $\lvert s \rvert < t / (4 C \e)$ by using $(n + 1) \leq 2^n$
\begin{align*}
 \| \e^{- (t + s) A} \IP g \|_{\H^{2 \alpha} (\Omega ; \IC^d)} \leq C \e \sum_{n = 0}^{\infty} \bigg(\frac{\lvert s \rvert C \e}{t}\bigg)^n (n + 1) \| g \|_{\H^1_0 (\Omega ; \IC^d)} < 2 C \e \| g \|_{\H^1_0 (\Omega ; \IC^d)}.
\end{align*}
Especially, if $s = 0$, this shows that the family of operators $(\e^{- t A} \IP)_{0 < t \leq 1}$ is uniformly bounded in the space $\Lop(\H^1_0 (\Omega ; \IC^d) , \H^{2 \alpha} (\Omega ; \IC^d))$. To conclude the argument, let $(t_n)_{n \in \IN} \subset (0 , 1]$ converge to zero. Notice that $\e^{- t A} \IP g \to \IP g$ in $\L^2_{\sigma} (\Omega)$ as $t \to 0$ by the strong continuity of the semigroup. Since $(\e^{- t_n A} \IP g)_{n \in \IN}$ is uniformly bounded in the space $\H^{2 \alpha} (\Omega ; \IC^d)$, for any $0 < \eps \leq 2 \alpha$ there exists a convergent subsequence in the space $\H^{2 \alpha - \eps} (\Omega ; \IC^d)$ by the Theorem of Rellich and Kondrachov. Denoting the subsequence again by $(t_n)_{n \in \IN}$ we have that $\e^{- t_n A} \IP g \to \IP g$ as $n \to \infty$ in $\H^{2 \alpha - \eps} (\Omega ; \IC^d)$. Notice that $2 \alpha > 1 / 2$ and choose $\eps$ small enough such that $2 \alpha - \eps > 1 / 2$ holds. Now, the trace operator $\tr$ is well-defined on the space $\H^{2 \alpha - \eps} (\Omega ; \IC^d)$ and it is continuous from $\H^{2 \alpha - \eps} (\Omega ; \IC^d)$ to $\L^2 (\partial \Omega ; \IC^d)$. Consequently,
\begin{align*}
 0 = \lim_{n \to \infty} \tr(\e^{- t A} \IP g) = \tr(\IP g).
\end{align*}
We thus proved that for any $g \in \H^1_0 (\Omega ; \IC^d)$ the trace of $\IP g$ to $\partial \Omega$ is zero. This contradicts Lemma~\ref{Lem: Trace of Helmholtz}.
\end{proof}

In the following, we do the same analysis for right-hand sides in $\H^{-1} (\Omega ; \IC^d)$. We start with the following lemma, relating an estimate on the $\L^2$-norm of $\phi$ to a corresponding estimate on the $\H^{-1}$-norm of $u$.

\begin{lemma}
\label{Lem: Equivalence}
Let $\theta \in [0 , \pi)$ and $0 \leq \alpha \leq 1 / 2$. Then the following are equivalent:
\begin{enumerate}
 \item \label{Lem: Pressure estimate} There exists a constant $C > 0$ such that for all $F \in \H^{-1} (\Omega ; \IC^d)$ and $\lambda \in \S_{\theta}$ with $\lvert \lambda \rvert \geq 1$ the associated pressure $\phi \in \L^2_0 (\Omega)$ to~\eqref{Res} subject to~\eqref{Dir} and right-hand side $F$ satisfies
\begin{align*}
 \| \phi \|_{\L^2 (\Omega)} \leq C \lvert \lambda \rvert^{\alpha} \| F \|_{\H^{-1} (\Omega ; \IC^d)}.
\end{align*}
 \item \label{Lem: Velocity estimate} There exists a constant $C > 0$ such that for all $F \in \H^{-1} (\Omega ; \IC^d)$ and $\lambda \in \S_{\theta}$ with $\lvert \lambda \rvert \geq 1$ the function $u := (\lambda + \cA)^{-1} F$ satisfies
\begin{align*}
 \| u \|_{\H^{-1} (\Omega ; \IC^d)} \leq C \lvert \lambda \rvert^{\alpha - 1} \| F \|_{\H^{-1} (\Omega ; \IC^d)}.
\end{align*}
\end{enumerate}
\end{lemma}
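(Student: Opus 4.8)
The statement is an equivalence of two $\lambda$-decay estimates, and the strategy is to show that each of them is really a statement about the single quantity $|\lambda|\,\|u\|_{\H^{-1}(\Omega;\IC^d)}+\|\phi\|_{\L^2(\Omega)}$. Concretely, the plan is to establish, for every $\lambda\in\S_\theta$, the two-sided bound
\[
 \|\phi\|_{\L^2(\Omega)} \;\le\; C\bigl(|\lambda|\,\|u\|_{\H^{-1}(\Omega;\IC^d)}+\|F\|_{\H^{-1}(\Omega;\IC^d)}\bigr) \;\le\; C\bigl(\|\phi\|_{\L^2(\Omega)}+\|F\|_{\H^{-1}(\Omega;\IC^d)}\bigr),
\]
where $u:=(\lambda+\cA)^{-1}F$ is identified with the functional $v\mapsto\int_\Omega u\cdot\overline v\,\d x$ on $\H^1_0(\Omega;\IC^d)$ and $(u,\phi)$ satisfy~\eqref{Eq: Variational Dirichlet}. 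Granting this, the hypotheses $|\lambda|\ge1$ and $\alpha\ge0$ give $\|F\|_{\H^{-1}(\Omega;\IC^d)}\le|\lambda|^\alpha\|F\|_{\H^{-1}(\Omega;\IC^d)}$, so inserting estimate $(2)$ into the first inequality yields $(1)$, and inserting $(1)$ into the second yields $(2)$. There is no genuine obstacle in the argument; the points that merely need attention are that $u\in\L^2(\Omega;\IC^d)$ is to be read as an element of $\H^{-1}(\Omega;\IC^d)=\H^1_0(\Omega;\IC^d)^*$ so that $\langle u,\cdot\rangle_{\H^{-1},\H^1_0}$ and $\|u\|_{\H^{-1}(\Omega;\IC^d)}$ are meaningful, and that the residual term $\|F\|_{\H^{-1}(\Omega;\IC^d)}$ never costs more than a factor $|\lambda|^\alpha$ (the upper bound $\alpha\le1/2$ plays no role).

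For the first inequality I would test~\eqref{Eq: Variational Dirichlet} with $v:=B\phi$, where $B:\L^2_0(\Omega)\to\H^1_0(\Omega;\IC^d)$ is the Bogovski\u{\i} operator; this is admissible because the Dirichlet pressure lies in $\L^2_0(\Omega)$, and one has $\divergence(B\phi)=\phi$ together with $\|B\phi\|_{\H^1_0}\le C\|\phi\|_{\L^2}$. Since then $\int_\Omega\phi\,\overline{\divergence(B\phi)}\,\d x=\|\phi\|_{\L^2(\Omega)}^2$, identity~\eqref{Eq: Variational Dirichlet} reads
\[
 \|\phi\|_{\L^2(\Omega)}^2 = \lambda\int_\Omega u\cdot\overline{B\phi}\,\d x + \int_\Omega\nabla u\cdot\overline{\nabla B\phi}\,\d x - \langle F,B\phi\rangle_{\H^{-1},\H^1_0}.
\]
The first term is at most $|\lambda|\,\|u\|_{\H^{-1}(\Omega;\IC^d)}\|B\phi\|_{\H^1_0}\le C|\lambda|\,\|u\|_{\H^{-1}(\Omega;\IC^d)}\|\phi\|_{\L^2}$, the last at most $\|F\|_{\H^{-1}(\Omega;\IC^d)}\|B\phi\|_{\H^1_0}\le C\|F\|_{\H^{-1}(\Omega;\IC^d)}\|\phi\|_{\L^2}$, and the middle at most $\|\nabla u\|_{\L^2}\|\nabla B\phi\|_{\L^2}\le C\|\nabla u\|_{\L^2}\|\phi\|_{\L^2}$, where $\|\nabla u\|_{\L^2}\le C\|F\|_{\H^{-1}(\Omega;\IC^d)}$ by Proposition~\ref{Prop: Resolvent}~\eqref{Prop: Dirichlet} (using $\H^{-1}(\Omega;\IC^d)\hookrightarrow\H^{-1}_\sigma(\Omega)$). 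Dividing by $\|\phi\|_{\L^2}$ gives the first inequality, and hence $(2)\Rightarrow(1)$.

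For the second inequality I would read~\eqref{Eq: Variational Dirichlet} as a bound on the functional $\lambda u$: for every $v\in\H^1_0(\Omega;\IC^d)$,
\[
 \Bigl|\lambda\int_\Omega u\cdot\overline v\,\d x\Bigr| \le \|F\|_{\H^{-1}(\Omega;\IC^d)}\|v\|_{\H^1_0}+\|\nabla u\|_{\L^2}\|\nabla v\|_{\L^2}+\|\phi\|_{\L^2}\|\divergence v\|_{\L^2}\le C\bigl(\|F\|_{\H^{-1}(\Omega;\IC^d)}+\|\nabla u\|_{\L^2}+\|\phi\|_{\L^2}\bigr)\|v\|_{\H^1_0},
\]
using $\|\divergence v\|_{\L^2}\le\sqrt d\,\|\nabla v\|_{\L^2}$. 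Taking the supremum over $\|v\|_{\H^1_0}\le1$ yields $|\lambda|\,\|u\|_{\H^{-1}(\Omega;\IC^d)}\le C(\|F\|_{\H^{-1}(\Omega;\IC^d)}+\|\nabla u\|_{\L^2}+\|\phi\|_{\L^2})$, and bounding $\|\nabla u\|_{\L^2}\le C\|F\|_{\H^{-1}(\Omega;\IC^d)}$ once more by Proposition~\ref{Prop: Resolvent}~\eqref{Prop: Dirichlet} gives the second inequality, hence $(1)\Rightarrow(2)$.
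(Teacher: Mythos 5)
Your proposal is correct and follows essentially the same route as the paper: the direction \eqref{Lem: Velocity estimate}$\Rightarrow$\eqref{Lem: Pressure estimate} is obtained by testing~\eqref{Eq: Variational Dirichlet} with the Bogovski\u{\i} field $v:=B\phi$, and the converse by reading off the functional $\lambda u$ from~\eqref{Eq: Variational Dirichlet} via the supremum over $\|v\|_{\H^1_0}\leq 1$, in both cases absorbing $\|\nabla u\|_{\L^2}$ and $\|F\|_{\H^{-1}}$ through Proposition~\ref{Prop: Resolvent}~\eqref{Prop: Dirichlet} and the assumptions $\lvert\lambda\rvert\geq 1$, $\alpha\geq 0$. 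No gaps.
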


\begin{proof}
To prove~\eqref{Lem: Velocity estimate}~$\Rightarrow$~\eqref{Lem: Pressure estimate}, use~\eqref{Eq: Variational Dirichlet} and choose as a test function $v := B \phi$ with $B : \L^2_0 (\Omega) \to \H^1_0 (\Omega ; \IC^d)$ being the Bogovski\u{\i} operator. Indeed, this together with Proposition~\ref{Prop: Resolvent}~\eqref{Prop: Dirichlet} yields
\begin{align}
\label{Eq: Small lambda negative}
\begin{aligned}
 \int_{\Omega} \lvert \phi \rvert^2 \; \d x &= \lambda \int_{\Omega} u \cdot \overline{B \phi} \; \d x + \int_{\Omega} \nabla u \cdot \overline{\nabla B \phi} \; \d x - \langle F , B \phi \rangle_{\H^{-1} , \H^1_0} \\
 &\leq C \Big( \lvert \lambda \rvert \| u \|_{\H^{-1} (\Omega ; \IC^d)} + \| \nabla u \|_{\L^2 (\Omega ; \IC^{d^2})} + \| F \|_{\H^{-1} (\Omega ; \IC^d)} \Big) \| \phi \|_{\L^2 (\Omega)} \\
 &\leq C \lvert \lambda \rvert^{\alpha} \| F \|_{\H^{-1} (\Omega ; \IC^d)} \| \phi \|_{\L^2 (\Omega)}.
\end{aligned}
\end{align}
The estimate is concluded by dividing by $\| \phi \|_{\L^2 (\Omega)}$. \par
To prove~\eqref{Lem: Pressure estimate}~$\Rightarrow$~\eqref{Lem: Velocity estimate}, write by virtue of~\eqref{Eq: Variational Dirichlet}
\begin{align*}
 \lvert \lambda \rvert \sup_{\substack{v \in \H^1_0 (\Omega ; \IC^d) \\ \| v \|_{\H^1_0} \leq 1}} \Big\lvert \int_{\Omega} u \cdot \overline{v} \; \d x \Big\rvert &= \sup_{\substack{v \in \H^1_0 (\Omega ; \IC^d) \\ \| v \|_{\H^1_0} \leq 1}} \Big\lvert \int_{\Omega} \nabla u \cdot \overline{\nabla v} \; \d x - \int_{\Omega} \phi \; \overline{\divergence(v)} \; \d x - \langle F , v \rangle_{\H^{-1} , \H^1_0} \Big\rvert
\end{align*}
and conclude by means of H\"older's inequality, Proposition~\ref{Prop: Resolvent}~\eqref{Prop: Dirichlet}, and the presumed estimate on the pressure.
\end{proof}

We start by establishing of the actual estimates being valid and prove their sharpness afterwards.

\begin{proposition}
\label{Prop: Pressure estimate in H-1}
Let $\Omega$ be a bounded Lipschitz domain and $\theta \in (0 , \pi]$. For all $1 / 4 < \alpha \leq 1 / 2$, there exists a constant $C > 0$ such that for all $F \in \H^{-1} (\Omega ; \IC^d)$ and $\lambda \in \S_{\theta}$ the associated pressure $\phi \in \L^2_0 (\Omega)$ to~\eqref{Res} subject to~\eqref{Dir} and right-hand side $F$ satisfies
\begin{align*}
 \| \phi \|_{\L^2 (\Omega)} \leq C \max\{ 1 , \lvert \lambda \rvert^{\alpha} \} \| F \|_{\H^{-1} (\Omega ; \IC^d)}.
\end{align*}
\end{proposition}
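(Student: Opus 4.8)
The plan is to pass, via Lemma~\ref{Lem: Equivalence}, to an estimate on the velocity. Write $u := (\lambda + \cA)^{-1} F$ and let $\phi$ be the associated pressure. For $\lvert \lambda \rvert \geq 1$ it suffices, by the implication \eqref{Lem: Velocity estimate}$\Rightarrow$\eqref{Lem: Pressure estimate} of Lemma~\ref{Lem: Equivalence}, to establish the bound
\begin{align*}
 \| u \|_{\H^{-1} (\Omega ; \IC^d)} \leq C \lvert \lambda \rvert^{\alpha - 1} \| F \|_{\H^{-1} (\Omega ; \IC^d)} ,
\end{align*}
while for $\lvert \lambda \rvert \leq 1$ I would prove the $\lambda$-independent estimate $\| \phi \|_{\L^2 (\Omega)} \leq C \| F \|_{\H^{-1} (\Omega ; \IC^d)}$ by a direct Bogovski\u{\i} argument.

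The heart of the matter is an identity transferring the resolvent off $u$. Fix $\beta \in [\tfrac{1}{2} - \alpha , \tfrac{1}{4})$, which is a nonempty interval precisely because $\alpha > 1/4$ --- this is the only place the hypothesis enters. Given $v \in \H^1_0 (\Omega ; \IC^d)$, since $u \in \L^2_\sigma (\Omega)$ and $\Rg(\Id - \IP) = \nabla \H^1 (\Omega)$ is $\L^2$-orthogonal to $\L^2_\sigma (\Omega)$ by~\eqref{Eq: Orthogonal range Helmholtz P}, one has $\int_\Omega u \cdot \overline{v} \, \d x = \int_\Omega u \cdot \overline{\psi} \, \d x$ with $\psi := \IP v \in \L^2_\sigma (\Omega)$. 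Put $\chi := (\overline{\lambda} + A)^{-1} \psi \in \dom(A) \subset \H^1_{0 , \sigma} (\Omega)$, which is well-defined since $\overline{\lambda} \in \S_\theta$ and $A$ is invertible. Writing $\psi = \overline{\lambda} \chi + A \chi$, using self-adjointness of $A$ on $\L^2_\sigma (\Omega)$ together with $\dom(A^{1/2}) = \H^1_{0 , \sigma} (\Omega)$ from~\eqref{Eq: Kato square root} (so that $\int_\Omega u \cdot \overline{A \chi} \, \d x = \int_\Omega \nabla u \cdot \overline{\nabla \chi} \, \d x$), and then testing~\eqref{Eq: Variational Dirichlet} with the solenoidal field $\chi$ (whence the pressure term drops out), I would obtain
\begin{align*}
 \int_\Omega u \cdot \overline{v} \, \d x = \lambda \int_\Omega u \cdot \overline{\chi} \, \d x + \int_\Omega \nabla u \cdot \overline{\nabla \chi} \, \d x = \langle F , \chi \rangle_{\H^{-1} , \H^1_0} .
\end{align*}

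To close the argument I would combine two facts: $\IP$ maps $\H^{2 \beta} (\Omega ; \IC^d)$ boundedly into $\dom(A^\beta)$ for $0 \leq \beta < 1/4$ (combine~\cite[Prop.~2.16]{Mitrea_Monniaux} with~\cite[Thm.~5.1]{Mitrea_Monniaux}, as in the proof of Proposition~\ref{Prop: Pressure estimate Dirichlet}), so that $\| A^\beta \psi \|_{\L^2} \leq C \| v \|_{\H^{2 \beta}} \leq C \| v \|_{\H^1_0}$; and $\| A^s (\overline{\lambda} + A)^{-1} \|_{\Lop(\L^2_\sigma (\Omega))} \leq C \lvert \lambda \rvert^{s - 1}$ for $0 \leq s \leq 1$ (spectral theorem, or the moment inequality~\cite[Prop.~6.6.4]{Haase}). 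Since $\| \nabla \chi \|_{\L^2} = \| A^{1/2} \chi \|_{\L^2} = \| A^{1/2 - \beta} (\overline{\lambda} + A)^{-1} (A^\beta \psi) \|_{\L^2}$ with $1/2 - \beta \in (1/4 , 1/2]$, Poincar\'e's inequality and the identity above give $\lvert \int_\Omega u \cdot \overline{v} \, \d x \rvert \leq C \lvert \lambda \rvert^{- 1/2 - \beta} \| F \|_{\H^{-1}} \| v \|_{\H^1_0}$; taking the supremum over $v$ in the unit ball of $\H^1_0 (\Omega ; \IC^d)$ and using $- 1/2 - \beta \leq \alpha - 1$ and $\lvert \lambda \rvert \geq 1$ yields the velocity bound, hence the pressure bound for $\lvert \lambda \rvert \geq 1$ through Lemma~\ref{Lem: Equivalence}. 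For $\lvert \lambda \rvert \leq 1$ I would test~\eqref{Eq: Variational Dirichlet} with $v := B \phi$, so $\| \phi \|_{\L^2}^2 = \lambda \int_\Omega u \cdot \overline{\IP B \phi} \, \d x + \int_\Omega \nabla u \cdot \overline{\nabla B \phi} \, \d x - \langle F , B \phi \rangle_{\H^{-1} , \H^1_0}$; the last two terms are $\leq C \| F \|_{\H^{-1}} \| \phi \|_{\L^2}$ by boundedness of $B$ and Proposition~\ref{Prop: Resolvent}~\eqref{Prop: Dirichlet}, and by the identity the first equals $\lambda \langle F , (\overline{\lambda} + A)^{-1} \IP B \phi \rangle_{\H^{-1} , \H^1_0}$, which is $\leq C \lvert \lambda \rvert \| F \|_{\H^{-1}} \| \IP B \phi \|_{\L^2} \leq C \| F \|_{\H^{-1}} \| \phi \|_{\L^2}$ because $\| A^{1/2} (\overline{\lambda} + A)^{-1} \|_{\Lop(\L^2_\sigma)}$ stays bounded as $\lvert \lambda \rvert \to 0$ ($A$ being invertible). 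Dividing by $\| \phi \|_{\L^2}$ and combining the two ranges gives $\| \phi \|_{\L^2} \leq C \max\{ 1 , \lvert \lambda \rvert^\alpha \} \| F \|_{\H^{-1}}$.

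The main obstacle is the quantitative regularity of the Helmholtz projection: $\IP$ sends $\H^{2 \beta}$ into $\dom(A^\beta)$ only for $\beta < 1/4$, and this threshold is exactly what forces $\alpha > 1/4$ and is responsible for the constant growing in $\lambda$ (for data in $\L^2_\sigma (\Omega)$ one would gain an extra half-power of decay, which is why the exponent has the opposite sign in Proposition~\ref{Prop: Pressure estimate Dirichlet}). The remaining steps --- verifying the resolvent identity from~\eqref{Eq: Variational Dirichlet} and checking that the fractional exponent $1/2 - \beta$ stays in $[0 , 1]$ so the moment inequality applies --- are routine bookkeeping.
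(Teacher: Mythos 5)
Your proof is correct, and while it follows the paper's overall skeleton --- splitting into $\lvert \lambda \rvert \geq 1$ and $\lvert \lambda \rvert < 1$ and reducing the former to an $\H^{-1}$-bound on $u$ via Lemma~\ref{Lem: Equivalence} --- the mechanism behind that bound is genuinely different. The paper estimates $\lvert \lambda \int_\Omega u \cdot \overline{\IP v} \, \d x \rvert$ by pairing $u$, viewed as an element of $\H^{2\alpha - 1}_\sigma (\Omega) = \dom(\cA^\alpha)$, against $\IP v \in \H^{1 - 2\alpha}_\sigma (\Omega)$, and then applies $\| \cA^{\alpha} (\lambda + \cA)^{-1} \|_{\Lop(\H^{-1}_\sigma)} \leq C \lvert \lambda \rvert^{\alpha - 1}$; this requires the identification of the negative-order fractional power domains~\eqref{Eq: Fractional power domains on H-1}, which is precisely what the similarity-and-interpolation argument at the end of Section~\ref{Sec: The Stokes operator} was set up to provide. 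Your identity $\int_\Omega u \cdot \overline{v} \, \d x = \langle F , (\overline{\lambda} + A)^{-1} \IP v \rangle_{\H^{-1} , \H^1_0}$ is the adjoint of the same computation: by shifting the resolvent onto the test function you only need the positive-order mapping property $\IP : \H^{2\beta} (\Omega ; \IC^d) \to \dom(A^\beta)$ for $\beta < 1/4$ (the same input already used in Proposition~\ref{Prop: Pressure estimate Dirichlet}) together with standard sectorial estimates for the self-adjoint, invertible operator $A$, so the machinery behind~\eqref{Eq: Fractional power domains on H-1} is bypassed entirely. Both routes hit the same $1/4$ threshold coming from the Mitrea--Monniaux regularity of $\IP$, and your bookkeeping is right: $[1/2 - \alpha , 1/4)$ is nonempty exactly when $\alpha > 1/4$, and $\lvert \lambda \rvert^{-1/2 - \beta} \leq \lvert \lambda \rvert^{\alpha - 1}$ for $\lvert \lambda \rvert \geq 1$. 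For $\lvert \lambda \rvert < 1$ your argument also works, though it is more roundabout than necessary: the paper simply notes that in~\eqref{Eq: Small lambda negative} all three terms are already bounded by $C \| F \|_{\H^{-1} (\Omega ; \IC^d)}$ via Proposition~\ref{Prop: Resolvent}~\eqref{Prop: Dirichlet}, with no need to invoke the resolvent identity a second time.
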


\begin{proof}
First of all, notice that the calculation carried out in~\eqref{Eq: Small lambda negative} already gives the uniform boundedness of the constant for all $\lambda \in \S_{\theta}$ with $\lvert \lambda \rvert < 1$ and thus, leaving us with the task to prove estimates in the case $\lvert \lambda \rvert \geq 1$. In this case, Lemma~\ref{Lem: Equivalence} reduces the problem to bound the $\H^{-1}$-norm of $u$. \par
To this end, let $F \in \H^{-1} (\Omega ; \IC^d)$ and $u := (\lambda + \cA)^{-1} F$. Since $u \in \L^2_{\sigma} (\Omega)$ and $\IP$ is self-adjoint one finds
\begin{align*}
 \lambda \int_{\Omega} u \cdot \overline{v} \; \d x = \lambda \int_{\Omega} u \cdot \overline{\IP v} \; \d x.
\end{align*}
By~\cite[Prop.~2.16]{Mitrea_Monniaux}, $\IP$ maps $\H^{1 - 2 \alpha} (\Omega ; \IC^d)$ boundedly into $\H^{1 - 2 \alpha}_{\sigma} (\Omega)$, so that
\begin{align*}
 \Big\lvert \lambda \int_{\Omega} u \cdot \overline{\IP v} \; \d x \Big\rvert \leq C \lvert \lambda \rvert \| u \|_{\H^{2 \alpha - 1}_{\sigma} (\Omega)} \| v \|_{\H^1_0 (\Omega ; \IC^d)}.
\end{align*}
Since the space $\H^{2 \alpha - 1}_{\sigma} (\Omega)$ coincides with $\dom(\cA^{\alpha})$, compare~\eqref{Eq: Fractional power domains on H-1}, one finds
\begin{align*}
 \lvert \lambda \rvert \| u \|_{\H^{2 \alpha - 1}_{\sigma} (\Omega)} \leq C \lvert \lambda \rvert \| \cA^{\alpha} (\lambda + \cA)^{-1} F \|_{\H^{- 1}_{\sigma} (\Omega)} \leq C \lvert \lambda \rvert^{\alpha} \| F \|_{\H^{-1}_{\sigma} (\Omega)}.
\end{align*}
Now, the continuous inclusion $\H^{-1} (\Omega ; \IC^d) \hookrightarrow \H^{-1}_{\sigma} (\Omega)$ concludes the proof.
\end{proof}

Finally, we prove that this bound is in fact sharp.

\begin{proposition}
\label{Prop: Failure of pressure estimate}
Let $\Omega$ be a bounded domain with $\C^4$-boundary, $\theta \in (\pi / 2 , \pi)$, and $0 \leq \alpha < 1 / 4$. Then for all $n \in \IN$ there exist $F_n \in \H^{-1} (\Omega ; \IC^d)$ and $\lambda_n \in \S_{\theta}$ with $\lvert \lambda_n \rvert \geq 1$ such that the to~\eqref{Res} subject to~\eqref{Dir} with right-hand side $F_n$ associated pressure $\phi_n \in \L^2_0 (\Omega)$ satisfies
\begin{align*}
 \| \phi_n \|_{\L^2 (\Omega)} > n \lvert \lambda_n \rvert^{\alpha} \| F_n \|_{\H^{-1} (\Omega ; \IC^d)}.
\end{align*}
\end{proposition}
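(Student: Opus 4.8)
The plan is to argue by contradiction, to reduce to the Helmholtz projection by duality, and then to reuse almost verbatim the bootstrap from the proof of Proposition~\ref{Prop: Sharpness on L2}. Suppose the statement fails for some $\alpha \in [0 , 1 / 4)$; then there is a constant $C > 0$ such that $\| \phi \|_{\L^2 (\Omega)} \leq C \lvert \lambda \rvert^{\alpha} \| F \|_{\H^{-1} (\Omega ; \IC^d)}$ for all $F \in \H^{-1} (\Omega ; \IC^d)$ and all $\lambda \in \S_{\theta}$ with $\lvert \lambda \rvert \geq 1$. By Lemma~\ref{Lem: Equivalence} this is equivalent to $\| (\lambda + \cA)^{-1} F \|_{\H^{-1} (\Omega ; \IC^d)} \leq C \lvert \lambda \rvert^{\alpha - 1} \| F \|_{\H^{-1} (\Omega ; \IC^d)}$ for $\lvert \lambda \rvert \geq 1$, i.e.\ to the boundedness of $(\lambda + \cA)^{-1}$ on $\H^{-1} (\Omega ; \IC^d)$ with operator norm at most $C \lvert \lambda \rvert^{\alpha - 1}$.

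Next I would dualize this resolvent bound. Factoring $(\lambda + \cA)^{-1}$ through the canonical maps $\H^{-1} (\Omega ; \IC^d) \to \H^{-1}_{\sigma} (\Omega)$ and $\L^2_{\sigma} (\Omega) \hookrightarrow \H^{-1} (\Omega ; \IC^d)$, using the self-adjointness of $A$ on $\L^2_{\sigma} (\Omega)$ and the fact that $\IP$ is the $\L^2$-adjoint of the inclusion $\L^2_{\sigma} (\Omega) \hookrightarrow \H^{-1} (\Omega ; \IC^d)$, one identifies the adjoint operator on $\H^1_0 (\Omega ; \IC^d)$ as $g \mapsto (\overline{\lambda} + A)^{-1} \IP g$, which lies in $\H^1_{0 , \sigma} (\Omega) \subset \H^1_0 (\Omega ; \IC^d)$. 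Hence, after replacing $\overline{\lambda}$ by $\lambda$ (using $\overline{\S_{\theta}} = \S_{\theta}$), $\lvert \lambda \rvert^{1 - \alpha} \| (\lambda + A)^{-1} \IP g \|_{\H^1_{0 , \sigma} (\Omega)} \leq C \| g \|_{\H^1_0 (\Omega ; \IC^d)}$ for $\lvert \lambda \rvert \geq 1$, and by~\eqref{Eq: Kato square root} together with the invertibility of $A$ this reads $\| A^{1 / 2} (\lambda + A)^{-1} \IP g \|_{\L^2 (\Omega ; \IC^d)} \leq C \lvert \lambda \rvert^{\alpha - 1} \| g \|_{\H^1_0 (\Omega ; \IC^d)}$.

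From here the argument is parallel to the proof of Proposition~\ref{Prop: Sharpness on L2}. Inserting this estimate into the Cauchy integral~\eqref{Eq: Cauchy integrals} (whose contour, for $0 < t \leq 1$, stays in $\{ \lvert \lambda \rvert \geq 1 \}$) gives $\| A^{1 / 2} \e^{- t A} \IP g \|_{\L^2} \leq C t^{- \alpha} \| g \|_{\H^1_0}$ for $0 < t \leq 1$, and hence $\| A \e^{- t A} \IP g \|_{\L^2} \leq C t^{- 1 / 2 - \alpha} \| g \|_{\H^1_0}$ by the semigroup law. I would then run the Taylor-expansion/Stirling bootstrap of Proposition~\ref{Prop: Sharpness on L2} with essentially no change, the exponent ``$\alpha$'' there being replaced by $1 / 2 - \alpha$ here: interpolating the leading semigroup factor into $\H^{1 - 2 \alpha} (\Omega ; \IC^d)$ by means of $\| v \|_{\H^{1 - 2 \alpha}} \leq C \| v \|_{\L^2}^{2 \alpha} \| \nabla v \|_{\L^2}^{1 - 2 \alpha}$ and the gradient estimate~\eqref{Eq: Gradient estimates semigroup}, one obtains $\| (t A)^n \e^{- t A} \IP g \|_{\H^{1 - 2 \alpha} (\Omega ; \IC^d)} \leq (n + 1)! \, (C \e)^{n + 1} \| g \|_{\H^1_0 (\Omega ; \IC^d)}$ and then, via the Taylor series of the analytic semigroup, the uniform bound $\sup_{0 < t \leq 1} \| \e^{- t A} \IP g \|_{\H^{1 - 2 \alpha} (\Omega ; \IC^d)} \leq C \| g \|_{\H^1_0 (\Omega ; \IC^d)}$. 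Since $\alpha < 1 / 4$ we have $1 - 2 \alpha > 1 / 2$; as $\e^{- t A} \IP g \to \IP g$ in $\L^2 (\Omega ; \IC^d)$ while remaining bounded in $\H^{1 - 2 \alpha} (\Omega ; \IC^d)$, the theorem of Rellich and Kondrachov yields a subsequence converging in $\H^{1 - 2 \alpha - \eps} (\Omega ; \IC^d)$ for a small $\eps > 0$ with $1 - 2 \alpha - \eps > 1 / 2$; continuity of the trace on that space and the fact that $\e^{- t A} \IP g \in \dom(A) \subset \H^1_{0 , \sigma} (\Omega)$ has vanishing trace then force $\tr(\IP g) = 0$ for every $g \in \H^1_0 (\Omega ; \IC^d)$, contradicting Lemma~\ref{Lem: Trace of Helmholtz}.

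The delicate step is the duality identification in the second paragraph: since $\IP g$ does \emph{not} belong to $\H^1_{0 , \sigma} (\Omega)$ in general and the passage from $\H^{-1} (\Omega ; \IC^d)$ to $\H^{-1}_{\sigma} (\Omega)$ is not injective, one cannot simply test the weak formulation~\eqref{Eq: Variational Dirichlet} against $\IP g$ and must instead compute the adjoint through the displayed factorization (noting that $(\lambda + \cA)^{-1}$ annihilates the ``gradient part'' of an $\H^{-1} (\Omega ; \IC^d)$-functional). Beyond that, the only thing to monitor is the exponent arithmetic, arranged so that the bootstrap output sits in $\H^{1 - 2 \alpha} (\Omega ; \IC^d)$ with $1 - 2 \alpha > 1 / 2$; the borderline $\alpha = 1 / 4$ is exactly where the trace ceases to be defined, which is why the hypothesis $\alpha < 1 / 4$ is precisely what is needed. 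All remaining steps---the Cauchy-integral bound, the combinatorial bootstrap, and the Rellich compactness argument---are identical to those in the proof of Proposition~\ref{Prop: Sharpness on L2}.
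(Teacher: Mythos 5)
Your proposal is correct and follows essentially the same route as the paper: contradiction via Lemma~\ref{Lem: Equivalence}, dualization to the bound $\lvert \lambda \rvert^{1 - \alpha} \| (\lambda + A)^{-1} \IP g \|_{\H^1_{0 , \sigma} (\Omega)} \leq C \| g \|_{\H^1_0 (\Omega ; \IC^d)}$, the Cauchy-integral passage to a semigroup estimate, the factorial bootstrap with interpolation into $\H^{1 - 2 \alpha} (\Omega ; \IC^d)$, and the Rellich/trace argument against Lemma~\ref{Lem: Trace of Helmholtz}. Your extra care with the duality identification (the non-injectivity of $\H^{-1} (\Omega ; \IC^d) \to \H^{-1}_{\sigma} (\Omega)$ and the fact that one cannot test against $\IP g$) is a point the paper leaves implicit, and your exponent bookkeeping matches the paper's.
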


\begin{proof}
We argue by contradiction. Hence by virtue of Lemma~\ref{Lem: Equivalence}, we assume that there exists $0 \leq \alpha < 1 / 4$ and $C > 0$ such that for all $F \in \H^{-1}_{\sigma} (\Omega)$ and $\lambda \in \S_{\theta}$ with $\lvert \lambda \rvert \geq 1$ it holds
\begin{align*}
 \| (\lambda + \cA)^{-1} F \|_{\H^{-1} (\Omega ; \IC^d)} \leq C \lvert \lambda \rvert^{\alpha - 1} \| F \|_{\H^{-1} (\Omega ; \IC^d)}.
\end{align*}
By duality and~\eqref{Eq: Resolvent restriction of weak resolvent}, there exists $C > 0$ such that for all $\lambda \in \S_{\theta}$ with $\lvert \lambda \rvert \geq 1$ and all $g \in \H^1_0 (\Omega ; \IC^d)$ it holds
\begin{align}
\label{Eq: Contradictory resolvent bound}
 \lvert \lambda \rvert^{1 - \alpha} \| (\lambda + A)^{-1} \IP g \|_{\H^1_{0 , \sigma} (\Omega)} \leq C \| g \|_{\H^1_0 (\Omega ; \IC^d)}.
\end{align}
Following the proof of~\cite[Prop.~3.7]{Tolksdorf_Watanabe}, the estimate~\eqref{Eq: Contradictory resolvent bound} in combination with~\eqref{Eq: Cauchy integrals} lead to the semigroup estimate
\begin{align}
\label{Eq: Contradiction estimate}
 t^{\alpha} \| \e^{- t A} \IP g \|_{\H^1_{0 , \sigma} (\Omega)} \leq C \| g \|_{\H^1_0 (\Omega ; \IC^d)} \qquad (0 < t \leq 1).
\end{align}
Next, we are going to estimate as in the proof of Proposition~\ref{Prop: Sharpness on L2} for a natural number $n \in \IN$ and $0 < t \leq 1$ the term $(t A)^n \e^{- t A} \IP g$. To this end, write
\begin{align}
\label{Eq: Semigroup decomposition negative}
  (t A)^n \e^{- t A} \IP g = t^{1 - \alpha} A^{1 / 2} \e^{- \frac{1}{n + 1} t A} (t A \e^{- \frac{1}{n + 1} t A})^{n - 1} A^{1 / 2} t^{\alpha} \e^{- \frac{1}{n + 1} t A} \IP g.
\end{align}
The first term in the product on the right-hand side is estimated by means of the interpolation inequality $\| \cdot \|_{\H^{1 - 2 \alpha}} \leq C \| \cdot \|_{\L^2}^{2 \alpha} \| \nabla \cdot \|_{\L^2}^{1 - 2 \alpha}$, the uniform boundedness of the semigroup $\e^{-t A}$ as a family on $\L^2_{\sigma} (\Omega)$, the gradient estimate~\eqref{Eq: Gradient estimates semigroup}, and~\eqref{Eq: Fractional parabolic smoothing}. Indeed, for all $h \in \L^2_{\sigma} (\Omega)$, we have
\begin{align}
\label{Eq: First product estimate negative}
\begin{aligned}
 &t^{1 - \alpha} \| A^{1 / 2} \e^{- \frac{1}{n + 1} t A} h \|_{\H^{1 - 2 \alpha} (\Omega ; \IC^d)} \\
 &\leq C \Big(t^{1 / 2} \| \e^{- \frac{1}{2 (n + 1)} t A} A^{1 / 2} \e^{- \frac{1}{2 (n + 1)} t A} h \|_{\L^2_{\sigma} (\Omega)} \Big)^{2 \alpha} \Big( t \| \nabla \e^{- \frac{1}{2 (n + 1)} t A} A^{1 / 2} \e^{- \frac{1}{2 (n + 1)} t A} h \|_{\L^2 (\Omega ; \IC^{d^2})} \Big)^{1 - 2 \alpha} \\
 &\leq C (n + 1)^{1 / 2 - \alpha} \Big(t^{1 / 2} \| A^{1 / 2} \e^{- \frac{1}{2 (n + 1)} t A} h \|_{\L^2_{\sigma} (\Omega)} \Big)^{2 \alpha} \Big( t^{1 / 2} \| A^{1 / 2} \e^{- \frac{1}{2 (n + 1)} t A} h \|_{\L^2_{\sigma} (\Omega)} \Big)^{1 - 2 \alpha} \\
 &\leq C (n + 1)^{1 - \alpha} \| h \|_{\L^2_{\sigma} (\Omega)}.
\end{aligned}
\end{align}
The second term in the product in~\eqref{Eq: Semigroup decomposition negative} was already estimated in~\eqref{Eq: Second term in product}. The third term in the product in~\eqref{Eq: Semigroup decomposition negative} is finally estimated, by using~\eqref{Eq: Kato square root} and~\eqref{Eq: Contradiction estimate} by
\begin{align}
\label{Eq: Third product estimate negative}
 \| A^{1 / 2} t^{\alpha} \e^{- \frac{1}{n + 1} t A} \IP g \|_{\L^2_{\sigma} (\Omega)} \leq t^{\alpha} \| \e^{- \frac{1}{n + 1} t A} \IP g \|_{\H^1_{0 , \sigma} (\Omega)} \leq C (n + 1)^{\alpha} \| g \|_{\H^1_0 (\Omega ; \IC^d)}.
\end{align}
Combining~\eqref{Eq: Semigroup decomposition negative},~\eqref{Eq: First product estimate negative},~\eqref{Eq: Second term in product},~\eqref{Eq: Third product estimate negative}, and using $n^n \leq n ! \e^n$ (Stirling formula!) finally yields
\begin{align*}
  \| (t A)^n \e^{- t A} \IP g \|_{\H^{1 - 2 \alpha} (\Omega ; \IC^d)} \leq (C (n + 1))^{n + 1} \| g \|_{\H^1_0 (\Omega ; \IC^d)} \leq (n + 1)! (C \e)^{n + 1} \| g \|_{\H^1_0 (\Omega ; \IC^d)}.
\end{align*}
The rest of the contradiction argument follows exactly the lines below~\eqref{Eq: Estimate of powers} in the proof of Proposition~\ref{Prop: Sharpness on L2} and is thus omitted.
\end{proof}

Recall that in order to derive the estimates in the case of Neumann-type boundary conditions in Proposition~\ref{Prop: Pressure estimate Neumann} it was needed that solutions to the Poisson problem with right-hand side in $\L^2 (\Omega)$ admit $\H^2$-regularity. Thus, this proof cannot be carried out on general bounded Lipschitz domains. However, as all objects appearing in the estimate in Proposition~\ref{Prop: Pressure estimate Neumann} exist if the boundary of $\Omega$ is merely Lipschitz. Thus, one might wonder whether Proposition~\ref{Prop: Pressure estimate Neumann} is true on general Lipschitz domains. Unfortunately, one cannot deduce the validity of these estimates by approximating the Lipschitz domain by smooth domains as the constants in the respective estimate blow up. If one wants to prove Stokes resolvent estimates in $\L^p$ for Neumann-type boundary conditions on mere Lipschitz domains, it would be tempting to imitate Shen's proof~\cite{Shen} carried out for no-slip boundary conditions. As it was described in the introduction, a corresponding weak reverse H\"older estimate might look as~\eqref{Eq: Reverse Holder introduction} but on general Lipschitz domains with $p := 2d / (d - 1)$. It was further described in the introduction, that an estimate of the form presented in Proposition~\ref{Prop: Pressure estimate Neumann} would help to achieve these resolvent estimates. In view of this, it would be interesting to know the answer to the following problem.

\begin{problem}
Prove or disprove the validity of~\eqref{Eq: Neumann pressure L2} if $\Omega$ is a bounded Lipschitz domain.
\end{problem}

\section{Regularity estimates in convex domains}
\label{Sec: Estimates on convex domains}

\noindent If $\Omega$ is a bounded and convex domain, it is well-known that weak solutions to the Poisson problem with homogeneous Dirichlet or Neumann boundary conditions and right-hand side in $\L^2 (\Omega)$ admit $\H^2$-regularity. To understand a rough sketch of its proof, we need to introduce some notions from geometry. \par
If $\Omega \subset \IR^d$ is a bounded domain with $\C^2$-boundary (not necessarily convex), and if after a suitable translation and rotation of $\Omega$ the function $\varphi : \IR^{d - 1} \to \IR$ locally describes the boundary of $\Omega$ around the point $p = (0 , \varphi(0))$, then, if the rotation is chosen such that $\nabla \varphi (0) = 0$, the second fundamental form $\II_p$ at this boundary point is the sesquilinear form given by
\begin{align*}
 \II_p (\xi ; \eta) = \frac{\partial^2 \varphi(0)}{\partial_{x_j} \partial_{x_k}} \xi_j \overline{\eta_k} \qquad (\xi , \eta \in \IC^{d - 1}).
\end{align*}
Notice that $\II_p (\cdot ; \cdot)$ is conjugate symmetric and thus $\II_p (\xi ; \xi)$ is a real number for each $\xi \in \IC^{d - 1}$. If $\Omega$ is convex and if $\Omega$ locally lies below the graph of $\varphi$, then $- \varphi$ is convex and thus the second fundamental form is non-positive, which means that
\begin{align}
\label{Eq: Convexity 1}
 \II_p (\xi ; \xi) \leq 0 \qquad (\xi \in \IC^{d - 1}).
\end{align}
Furthermore, if $\II_p$ denotes the matrix associated to the sesquilinear form $\II_p (\cdot ; \cdot)$, then convexity of $\Omega$ implies that
\begin{align}
\label{Eq: Convexity 2}
 \tr(\II_p) \leq 0.
\end{align}
In the following, we skip the subscript $p$ and keep in mind, that the second fundamental form varies from boundary point to boundary point. \par
To understand why the domain of the Laplacian embeds into $\H^2$ in convex domains, the following formula of integration by parts due to Grisvard is eminent~\cite[Thm.~3.1.1.1]{Grisvard}. Notice that in~\cite[Thm.~3.1.1.1]{Grisvard} this formula is derived for real-valued functions, but that a short analysis of its proof reveals the following formulation for complex-valued functions. Here and below, $\sigma$ generically denotes the surface measure of a set with a Lipschitz boundary. Recall further the notation $v_{\T}$ for the tangential component of a vector $v$ introduced in~\eqref{Eq: Tangential component}.

\begin{theorem}
\label{Thm: Grisvard}
Let $\Omega \subset \IR^d$ be a bounded domain with $\C^2$-boundary and let $v \in \C^{\infty} (\overline{\Omega} ; \IC^d)$. Then,
\begin{align*}
 \int_{\Omega} \lvert \divergence(v) \rvert^2 \; \d x - \int_{\Omega} \partial_j v_i \overline{\partial_i v_j} \; \d x &= - \int_{\partial \Omega} 2 \Re( v_{\T} \cdot \overline{\nabla_{\T} (v \cdot n)}) \; \d \sigma \\
 &\qquad - \int_{\partial \Omega} \big( \II (v_{\T} ; v_{\T}) + (\tr \II) \lvert v \cdot n \rvert^2 \big) \; \d \sigma.
\end{align*}
\end{theorem}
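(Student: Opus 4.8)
The plan is to deduce the complex-valued identity from the real-valued version of the formula, which is exactly~\cite[Thm.~3.1.1.1]{Grisvard}, by separating real and imaginary parts. Write $v = a + \ii b$ with $a := \Re v$ and $b := \Im v$, both in $\C^{\infty}(\overline{\Omega} ; \IR^d)$. The crux of the argument is the observation that \emph{both} sides of the claimed formula are additive under this decomposition: each equals the corresponding real-valued expression evaluated at $a$ plus the one evaluated at $b$, with all cross terms vanishing. Once this is checked, one simply applies Grisvard's real-valued formula to $a$ and to $b$ and adds the two identities.

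For the left-hand side this is a purely pointwise algebraic check. Since $\divergence(v) = \divergence(a) + \ii \divergence(b)$ one has $\lvert \divergence(v) \rvert^2 = \lvert \divergence(a) \rvert^2 + \lvert \divergence(b) \rvert^2$. Expanding $\partial_j v_i \overline{\partial_i v_j} = (\partial_j a_i + \ii \partial_j b_i)(\partial_i a_j - \ii \partial_i b_j)$ and summing over $i,j$, the ``diagonal'' contributions $\partial_j a_i \partial_i a_j$ and $\partial_j b_i \partial_i b_j$ reproduce the real integrands, while the imaginary part $\sum_{i,j}(\partial_j b_i \partial_i a_j - \partial_j a_i \partial_i b_j)$ vanishes after relabeling $i \leftrightarrow j$ in the second sum. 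Hence the left-hand side is literally the sum of the two real left-hand sides, even before integration.

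For the right-hand side the same additivity holds, and I would verify its three pieces in turn. Using $v_{\T} = a_{\T} + \ii b_{\T}$, $v \cdot n = a \cdot n + \ii\, b \cdot n$, and $\nabla_{\T}(v \cdot n) = \nabla_{\T}(a \cdot n) + \ii \nabla_{\T}(b \cdot n)$, one gets $2 \Re( v_{\T} \cdot \overline{\nabla_{\T}(v \cdot n)} ) = 2\, a_{\T} \cdot \nabla_{\T}(a \cdot n) + 2\, b_{\T} \cdot \nabla_{\T}(b \cdot n)$, the cross terms disappearing precisely because one takes the real part; and $\lvert v \cdot n \rvert^2 = \lvert a \cdot n \rvert^2 + \lvert b \cdot n \rvert^2$ trivially. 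For the second fundamental form term one uses that $\II(\cdot ; \cdot)$ is represented by the real symmetric Hessian of the local graph function $\varphi$: expanding $\II(v_{\T} ; v_{\T})$ gives real part $\II(a_{\T} ; a_{\T}) + \II(b_{\T} ; b_{\T})$ and imaginary part $\sum_{j,k} \tfrac{\partial^2 \varphi(0)}{\partial x_j \partial x_k}(b_{\T , j} a_{\T , k} - a_{\T , j} b_{\T , k})$, which vanishes by symmetry of the Hessian in $j,k$; the same applies to $(\tr \II) \lvert v \cdot n \rvert^2$. So the right-hand side, too, is the sum of the two real right-hand sides, and combining this with~\cite[Thm.~3.1.1.1]{Grisvard} applied to $a$ and $b$ and adding yields the stated identity.

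The only place I expect to require genuine care is the bookkeeping needed to match the formula in~\cite[Thm.~3.1.1.1]{Grisvard} — stated there for real fields, with its own sign conventions and with the second fundamental form written in Grisvard's notation — to the ``conjugate-free, $\Re$-free'' version of the displayed identity; in particular one must confirm that Grisvard's boundary term built from the second fundamental form and its trace corresponds exactly to $\II(v_{\T} ; v_{\T}) + (\tr \II) \lvert v \cdot n \rvert^2$ under the sign convention of~\eqref{Eq: Convexity 1}--\eqref{Eq: Convexity 2}. As an alternative to the real/imaginary split one could instead rerun Grisvard's proof verbatim for complex $v$, inserting complex conjugates into each integration by parts; the only subtle point is again the Hessian term, which is harmless since that Hessian is real.
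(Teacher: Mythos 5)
Your proposal is correct, but it takes a genuinely different route from the paper. The paper does not rederive the identity at all: it invokes \cite[Thm.~3.1.1.1]{Grisvard} and simply remarks that ``a short analysis of its proof'' yields the complex-valued formulation, i.e., it implicitly reruns Grisvard's integration by parts with conjugates inserted --- which is exactly the alternative you mention in your last sentence. Your primary argument instead treats the real-valued theorem as a black box and reduces the complex case to it via the decomposition $v = a + \ii b$, checking that both sides are additive: on the left, the imaginary part of $\sum_{i,j}\partial_j v_i \overline{\partial_i v_j}$ vanishes after relabeling $i \leftrightarrow j$; on the right, the cross terms are killed either by taking real parts or by the real symmetry of the Hessian representing $\II$. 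All of these cancellations check out, so the argument is sound. What your approach buys is that one never has to reopen Grisvard's proof --- only the statement of his theorem is needed, plus elementary algebra --- which makes the claim more easily verifiable; the cost is the sign/convention bookkeeping you correctly flag, namely confirming that Grisvard's boundary terms match $\II(v_{\T};v_{\T}) + (\tr\II)\lvert v\cdot n\rvert^2$ under the convention of~\eqref{Eq: Convexity 1}--\eqref{Eq: Convexity 2}, a step that is equally unavoidable in the paper's route.
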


There is also a counterpart of Theorem~\ref{Thm: Grisvard} for piecewise $\C^2$-domains, see~\cite[Thm.~3.1.1.2]{Grisvard} for real-valued functions. To state the theorem, we adopt the definition by Grisvard, that a bounded Lipschitz domain $\Omega$ is said to be piecewise $\C^2$-regular if there exist $\Gamma_0 , \Gamma_1 \subset \partial \Omega$ with $\partial \Omega = \Gamma_0 \cup \Gamma_1$ and where $\Gamma_0$ has surface measure zero and for each $x \in \Gamma_1$ the boundary of $\partial \Omega$ can be described as the graph of a $\C^2$-function in a neighborhood of $x$.

\begin{theorem}
\label{Thm: Piecewise Grisvard}
Let $\Omega \subset \IR^d$ be a bounded domain with a piecewise $\C^2$-boundary and let $v \in \C^{\infty} (\overline{\Omega} ; \IC^d)$. Then,
\begin{align*}
 \int_{\Omega} \lvert \divergence(v) \rvert^2 \; \d x - \int_{\Omega} \partial_j v_i \overline{\partial_i v_j} \; \d x &= \int_{\Gamma_1} \big(\divergence_{\T} ([v \cdot n] \overline{v_{\T}}) - 2 \Re( v_{\T} \cdot \overline{\nabla_{\T} (v \cdot n)})\big) \; \d \sigma \\
 &\qquad - \int_{\Gamma_1} \II (v_{\T} ; v_{\T}) + (\tr \II) \lvert v \cdot n \rvert^2 \; \d \sigma.
\end{align*}
\end{theorem}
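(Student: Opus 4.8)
The plan is to recover Grisvard's formula \cite[Thm.~3.1.1.2]{Grisvard}, which is stated for real-valued fields, and to pass to complex $v$ exactly as Theorem~\ref{Thm: Grisvard} passes from \cite[Thm.~3.1.1.1]{Grisvard}. Everything rests on a pointwise divergence identity: putting
\[
 w_j := \overline{\divergence(v)}\, v_j - v_k\,\overline{\partial_k v_j} \qquad (\text{summation over } k),
\]
the symmetry of second derivatives makes the two third-order terms cancel, so that $\divergence(w) = \lvert\divergence(v)\rvert^2 - \partial_j v_i\,\overline{\partial_i v_j}$ on $\Omega$. Since $w \in \C^{\infty}(\overline{\Omega};\IC^d)$ and $\Omega$ is a bounded Lipschitz domain, the Gauss--Green theorem yields
\[
 \int_{\Omega}\lvert\divergence(v)\rvert^2\,\d x - \int_{\Omega}\partial_j v_i\,\overline{\partial_i v_j}\,\d x = \int_{\partial\Omega} w\cdot n\,\d\sigma = \int_{\Gamma_1} w\cdot n\,\d\sigma ,
\]
the last equality since $\Gamma_0$ has surface measure zero.

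The next step reorganises $\int_{\Gamma_1} w\cdot n\,\d\sigma$ face by face, exactly as in the proof of \cite[Thm.~3.1.1.1]{Grisvard}. On a $\C^2$-piece of $\partial\Omega$ the outward normal $n$ extends $\C^1$-smoothly to a one-sided neighbourhood; substituting $v = v_{\T} + (v\cdot n)\,n$ into $w\cdot n$, splitting each derivative into its tangential and normal part, and using that the tangential derivatives of $n$ are encoded by the second fundamental form, the normal-derivative terms cancel and, after a tangential integration by parts, the integrand on each face becomes
\[
 \divergence_{\T}\big([v\cdot n]\,\overline{v_{\T}}\big) - 2\Re\big(v_{\T}\cdot\overline{\nabla_{\T}(v\cdot n)}\big) - \II(v_{\T};v_{\T}) - (\tr\II)\lvert v\cdot n\rvert^2 .
\]
Summing over the faces gives the claim. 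The sole difference from Theorem~\ref{Thm: Grisvard} is that there $\Gamma_1 = \partial\Omega$ is a \emph{closed} $\C^2$-hypersurface, so that $\int_{\partial\Omega}\divergence_{\T}([v\cdot n]\overline{v_{\T}})\,\d\sigma = 0$ by the surface divergence theorem; on a merely piecewise $\C^2$-boundary $\Gamma_1$ is a manifold with corners and this divergence term survives and is kept.

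To pass from real to complex $v$ one carries the conjugates through the computation above verbatim; the only algebraic inputs are the symmetry of second derivatives and the Hermitian symmetry of $\II$, which keeps $\II(v_{\T};v_{\T})$ and $\lvert v\cdot n\rvert^2$ real. Equivalently, one applies \cite[Thm.~3.1.1.2]{Grisvard} to the real and imaginary parts $a,b$ of $v$ and adds: the imaginary cross-terms $\Im(\partial_j v_i\,\overline{\partial_i v_j})$ and $\Im\,\II(v_{\T};v_{\T})$ vanish (by relabelling, resp.\ by Hermitian symmetry), $\lvert v\cdot n\rvert^2 = \lvert a\cdot n\rvert^2 + \lvert b\cdot n\rvert^2$, and the remaining cross-terms recombine into $-2\Re(v_{\T}\cdot\overline{\nabla_{\T}(v\cdot n)})$ and $\divergence_{\T}([v\cdot n]\overline{v_{\T}})$ --- for this last point one uses that at every edge the conormal frames of the two adjacent $\C^2$-faces carry opposite orientations, so that the two face contributions of the imaginary cross term $\divergence_{\T}((b\cdot n)a_{\T}-(a\cdot n)b_{\T})$ cancel.

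I expect the main obstacle to be the tangential calculus of the second step made rigorous on a boundary that is only $\C^2$ and only piecewise so: one must justify $\sigma$-a.e.\ on $\Gamma_1$ the tangential--normal decomposition of $\divergence(v)$ and of the derivatives of $n$, the tangential integration by parts that generates the $\divergence_{\T}$-term, and the cancellation of the normal-derivative contributions. This is the substance of Grisvard's proof, and a shortcut by smooth approximation of $\Omega$ is not available, since the $\divergence_{\T}$-term would vanish for every smooth approximant. Beyond this step, only the Gauss--Green theorem on Lipschitz domains and the elementary properties of the second fundamental form recorded before the statement enter.
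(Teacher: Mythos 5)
Your proposal is correct and takes essentially the same route as the paper, which offers no proof of its own but simply cites Grisvard's real-valued identity \cite[Thm.~3.1.1.2]{Grisvard} and passes to complex fields; your divergence identity for $w$, the Gauss--Green step over the Lipschitz boundary, and the face-by-face tangential calculus are precisely the content of Grisvard's argument. The one point the paper leaves implicit --- that the a priori nonzero imaginary part $\int_{\Gamma_1} \divergence_{\T}\big((b \cdot n)\, a_{\T} - (a \cdot n)\, b_{\T}\big) \, \d \sigma$ of the surface-divergence term must vanish for the complex formula to follow from the real one --- you identify and resolve correctly: writing this field as $n_k(b_k a_j - a_k b_j)$ and applying the surface divergence theorem on each face, the edge contributions are values of the antisymmetric form $a \wedge b$ on the orthonormal frames $(\nu, n)$ of the two adjacent faces in the two-plane normal to the edge, and these frames carry opposite orientations, so the contributions cancel pairwise.
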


To deduce that weak solutions to the equation $- \Delta u = f$ with Dirichlet or Neumann boundary conditions lie in $\H^2 (\Omega)$ if $\Omega$ is bounded and convex, let first $\Omega$ be a bounded, convex, and smooth domain. If $f \in \C^{\infty} (\overline{\Omega} ; \IR)$, then $u \in \C^{\infty} (\overline{\Omega} ; \IR)$ by higher regularity of the Laplacian. Take $v := \nabla u$ and apply Theorem~\ref{Thm: Grisvard} together with~\eqref{Eq: Convexity 1} and~\eqref{Eq: Convexity 2} to deduce
\begin{align*}
 \int_{\Omega} \lvert \divergence(v) \rvert^2 \; \d x \geq  \int_{\Omega} \partial_j v_i \partial_i v_j \; \d x - 2 \int_{\partial \Omega} v_{\T} \cdot \nabla_{\T} (v \cdot n) \; \d \sigma.
\end{align*}
A computation of the first term on the right-hand side yields
\begin{align*}
 \int_{\Omega} \partial_j v_i \partial_i v_j \; \d x = \sum_{i , j = 1}^d \int_{\Omega} \lvert \partial_i \partial_j u \rvert^2 \; \d x
\end{align*}
and since $\divergence(v) = -f$, it remains to understand what the boundary integral does. Here, the boundary conditions enter the game. If $u$ satisfies homogeneous Dirichlet boundary conditions, i.e., $u = 0$ on $\partial \Omega$, then $v_{\T} = \nabla_{\T} u = 0$ and if $u$ satisfies homogeneous Neumann boundary conditions, then $v \cdot n = n \cdot \nabla u = 0$. Hence, by Theorem~\ref{Thm: Grisvard}, we infer
\begin{align*}
 \int_{\Omega} \lvert f \rvert^2 \; \d x \geq \sum_{i , j = 1}^d \int_{\Omega} \lvert \partial_i \partial_j u \rvert^2 \; \d x.
\end{align*}
By density, one obtains this estimate for all $f \in \L^2(\Omega)$. Finally, since the constant in this inequality is one, in particular, it is independent of properties of the boundary, one can conclude the $\H^2$-regularity for general bounded convex domains by an approximation argument.

\begin{remark}
\label{Rem: Approximating sequence}
Let us explain the approximation of a bounded and convex domain by a sequence of smooth, bounded, and convex domains $(\Omega_k)_{k \in \IN}$ with $\Omega_k \subset \Omega_{k + 1}$ and $\bigcup_{k \in \IN} \Omega_k = \Omega$ in more detail. \par
Let $\Omega$ be a bounded and convex domain and assume without loss of generality that $0 \in \Omega$. For $k \in \IN$ let $K_k$ denote the closure of $(1 - 2^{- k}) \Omega$ and notice that $K_k \subset (1 - 2^{- (k + 1)}) \Omega$ and that $K_k$ is a compact and convex set. In this situation,~\cite[Lem.~2.3.2]{Hormander} provides us with a compact and convex set $C_k$ with smooth boundary that satisfies $K_k \subset C_k \subset (1 - 2^{- (k + 1)}) \Omega$. Now, let $\Omega_k$ be defined as the interior of $C_k$. \par
One could also ask, whether the sets are uniform in certain properties. For example, for all $k \in \IN$ it holds $\frac{1}{2} \Omega \subset \Omega_k \subset \Omega$ so that $\diam(\Omega) / 2 \leq \diam(\Omega_k) \leq \diam(\Omega)$. Another property is a uniform $d$-set property, which is the following: Let $r_0 > 0$ be such that $B := B(0 , r_0) \subset \frac{1}{2} \Omega$ so that $B \subset \Omega_k$ for all $k \in \IN$. Let $x_0 \in \partial \Omega_k$. Since $\Omega_k$ is convex, for all $t \in [0 , 1)$ and $x \in B$ the points $(1 - t) x + t x_0$ are contained in $\Omega_k$. This implies that $\Omega_k$ contains a cone with vertex at $x_0$, height $h = \lvert x_0 \rvert \geq r_0$, and opening angle $\omega = 2 \arctan (r_0 / \lvert x_0 \rvert)$. Since $\lvert x_0 \rvert \leq \diam(\Omega)$ we find $\omega \geq 2 \arctan(r_0 / \diam(\Omega))$. Thus, if $Q (x_0 , r)$ is a cube centered in $x_0$ and diameter $0 < r \leq 2 r_0$, then there exists a constant $C > 0$ depending only on $r_0$, $\diam(\Omega)$, and $d$ such that
\begin{align}
\label{Eq: d-set property}
 \lvert Q (x_0 , r) \cap \Omega_k \rvert \geq C r^d.
\end{align}
Notice that if $R_0 > r_0$, then for all $2 r_0 < r \leq 2 R_0$ it holds
\begin{align*}
 \lvert Q (x_0 , r) \cap \Omega_k \rvert \geq \lvert Q (x_0 , r_0) \cap \Omega_k \rvert \geq C r_0^d = \frac{C r_0^d}{(2 R_0)^d} r^d.
\end{align*}
Thus, we can assume that for all $R_0 > 0$ there exists a constant $C > 0$ depending only on $r_0$, $R_0$, $\diam(\Omega)$, and $d$ such that for all $k \in \IN$ and all $x_0 \in \partial \Omega_k$ the inequality~\eqref{Eq: d-set property} holds.
\end{remark}

Let $\Omega$ again be a bounded convex domain with smooth boundary. If $u$ and $\phi$ satisfy
\begin{align*}
\left\{ \begin{aligned}
 - \Delta u + \nabla \phi &= f &&\text{in } \Omega \\
 \divergence(u) &= 0 &&\text{in } \Omega \\
 u &= 0 &&\text{on } \partial \Omega,
\end{aligned} \right.
\end{align*}
with $f$ being smooth up to the boundary, one could try to imitate the calculations for the Laplacian above. To this end, there are at least two obvious choices for $v$. Fix $1 \leq \beta \leq d$. For the first choice, define $v_{\beta} := \nabla u_{\beta}$. Clearly, all boundary integrals as well as the integral involving the mixed product can be handled as above. However, $\divergence(v_{\beta}) = - f_{\beta} + \partial_{\beta} \phi$, so that the gradient of the pressure appears on the right-hand side of the inequality, which is an unfortunate situation. \par
Another choice for $v$ should incorporate that $\divergence(v) = - f_{\beta}$. For the $\beta$th component of the equation, this is accomplished by choosing $v_{\beta} := \nabla u_{\beta} - \phi \e_{\beta}$, where $\e_{\beta}$ denotes the $\beta$th unit basis vector. Moreover, convexity deals with the terms involving the second fundamental form, and one directly verifies that the mixed product (for $\beta$ fix) computes as
\begin{align*}
  \partial_j (v_{\beta})_i \partial_i (v_{\beta})_j = \partial_i \partial_j u_{\beta} \partial_i \partial_j u_{\beta}  + \lvert \partial_{\beta} \phi \rvert^2 - 2 \nabla \partial_{\beta} u_{\beta} \cdot \nabla \phi.
\end{align*}
Next, a summation over $\beta$ yields due to the solenoidality of $u$ (notice that we now sum over repeated indices as usual)
\begin{align*}
 \partial_j (v_{\beta})_i \partial_i (v_{\beta})_j &= \partial_i \partial_j u_{\beta} \partial_i \partial_j u_{\beta} + \lvert \nabla \phi \rvert^2.
\end{align*}
Altogether, we find
\begin{align*}
  \int_{\Omega} \lvert f \rvert^2 &\geq \sum_{i , j , \beta = 1}^d \int_{\Omega} \lvert \partial_i \partial_j u_{\beta} \rvert^2 \; \d x + \int_{\Omega} \lvert \nabla \phi \rvert^2 \; \d x - 2 \int_{\partial \Omega} (v_{\beta})_{\T} \cdot \nabla_{\T} (v_{\beta} \cdot n) \; \d \sigma.
\end{align*}
Unfortunately, one cannot simply conclude that the boundary integral vanishes as nothing is known about the trace of the pressure on the boundary of $\Omega$. However, imposing for example the Neumann-type boundary condition
\begin{align*}
 n \cdot \nabla u - \phi n = 0
\end{align*}
seems to be better suited for this approach as in this case the function $v_{\beta}$ turns out to have the additional property that $v_{\beta} \cdot n = 0$ on $\partial \Omega$. For more general Neumann-type boundary conditions and the resolvent problem this is made precise in the following theorem.

\begin{theorem}
\label{Thm: H2 regularity on convex domains}
Let $\Omega \subset \IR^d$, $d \geq 2$, be a bounded convex domain, $\mu \in (-1 , \sqrt{2} - 1)$, and $\theta \in (0 , \pi)$. Then for all $\lambda \in \S_{\theta}$ and all $f \in \L^2 (\Omega ; \IC^d)$ the weak solutions $u$ and $\phi$ to~\eqref{Res} subject to~\eqref{Neu} satisfy $u \in \H^2(\Omega ; \IC^d)$ and $\phi \in \H^1(\Omega)$. Moreover, there exists $C > 0$ depending only on $d$, $\mu$, and $\theta$ such that
\begin{align*}
  \lvert \lambda \rvert \int_{\Omega} \lvert \nabla u \rvert^2 \; \d x + \int_{\Omega} \lvert \nabla^2 u \rvert^2 \; \d x + \int_{\Omega} \lvert \nabla \phi \rvert^2 \; \d x \leq C \bigg( \int_{\Omega} \lvert f \rvert^2 \; \d x + \lvert \lambda \rvert^2 \int_{\Omega} \lvert u \rvert^2 \; \d x \bigg).
\end{align*}
\end{theorem}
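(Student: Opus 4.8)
The plan is to establish the estimate first under the additional hypotheses that $\partial\Omega$ is smooth and $f \in \C^{\infty}(\overline{\Omega};\IC^d)$ is solenoidal, in which case Remark~\ref{Rem: Higher regularity} gives $u \in \C^{\infty}(\overline{\Omega};\IC^d)$ and $\phi \in \C^{\infty}(\overline{\Omega})$, so that Theorem~\ref{Thm: Grisvard} applies. The general case then follows by density together with the approximation of a bounded convex domain by smooth bounded convex domains $\Omega_k \uparrow \Omega$ from Remark~\ref{Rem: Approximating sequence}, since the constant produced below will depend only on $d$, $\mu$, and $\theta$; the $\H^2$- and $\H^1$-regularity of $u$ and $\phi$ on $\Omega$ itself then drops out of the uniform a priori bound on the $\Omega_k$ by weak compactness and uniqueness of the weak solution (Proposition~\ref{Prop: Resolvent}).

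For the smooth case, fix $1 \le \beta \le d$ and apply Theorem~\ref{Thm: Grisvard} to the test function
\begin{align*}
 v_{\beta} := \nabla u_{\beta} + \mu\, \partial_{\beta} u - \phi\, \e_{\beta}.
\end{align*}
Two features are decisive. Since $\divergence(u) = 0$, the first equation of~\eqref{Res} gives $\divergence(v_{\beta}) = \Delta u_{\beta} - \partial_{\beta}\phi = \lambda u_{\beta} - f_{\beta}$; and in coordinates the boundary condition~\eqref{Neu} reads $\partial_n u_{\beta} + \mu (\partial_{\beta} u_j) n_j - \phi n_{\beta} = 0$ on $\partial\Omega$, which says precisely that $v_{\beta} \cdot n = 0$ on $\partial\Omega$. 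Hence in Theorem~\ref{Thm: Grisvard} applied to $v_{\beta}$ the two boundary terms containing $v_{\beta} \cdot n$ vanish, while $\int_{\partial\Omega} \II((v_{\beta})_{\T};(v_{\beta})_{\T})\,\d\sigma \le 0$ by convexity, see~\eqref{Eq: Convexity 1}. Summing over $\beta$,
\begin{align*}
 \int_{\Omega} \sum_{\beta} \partial_j (v_{\beta})_i \overline{\partial_i (v_{\beta})_j}\;\d x \le \int_{\Omega} \sum_{\beta} \lvert \divergence(v_{\beta}) \rvert^2\;\d x = \int_{\Omega} \lvert \lambda u - f \rvert^2\;\d x.
\end{align*}

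The next step is the pointwise expansion of the left-hand integrand. Using $\divergence(u) = 0$ repeatedly one finds
\begin{align*}
 \sum_{\beta} \partial_j (v_{\beta})_i \overline{\partial_i (v_{\beta})_j} = \lvert \nabla^2 u \rvert^2 + \lvert \nabla\phi \rvert^2 + 2\mu\,\Re\sum_{i,j,k} \partial_i\partial_j u_k\,\overline{\partial_i\partial_k u_j} + \mu^2\sum_{i,j,k} \partial_j\partial_k u_i\,\overline{\partial_i\partial_k u_j} - 2\mu\,\Re\big(\Delta u\cdot\overline{\nabla\phi}\big).
\end{align*}
In the last term one substitutes $\Delta u = \lambda u + \nabla\phi - f$ from~\eqref{Res}, producing $-2\mu\Re(\lambda u\cdot\overline{\nabla\phi}) - 2\mu\lvert\nabla\phi\rvert^2 + 2\mu\Re(f\cdot\overline{\nabla\phi})$; after integration the first and third of these are split by Young's inequality into an arbitrarily small multiple of $\int_{\Omega}\lvert\nabla\phi\rvert^2$ plus a multiple of $\lvert\lambda\rvert^2\int_{\Omega}\lvert u\rvert^2 + \int_{\Omega}\lvert f\rvert^2$, and the small $\int_{\Omega}\lvert\nabla\phi\rvert^2$ is harmless because the coefficient of $\int_{\Omega}\lvert\nabla\phi\rvert^2$ surviving on the left is $1 - 2\mu$, which is positive since $\mu < \sqrt{2}-1 < 1/2$. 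The two terms quadratic in $\nabla^2 u$ with transposed index pattern are each bounded in modulus by $\lvert\nabla^2 u\rvert^2$ pointwise by the Cauchy--Schwarz inequality, and, integrating them by parts and using $\divergence(u)=0$ once more, they can alternatively be turned into boundary integrals that combine with the non-positive second-fundamental-form term. Balancing all of this against $\int_{\Omega}\lvert\nabla^2 u\rvert^2$ and $\int_{\Omega}\lvert\nabla\phi\rvert^2$ shows that the right-hand side can be absorbed precisely when $\mu^2 + 2\mu < 1$, i.e.\@ $\mu \in (-1,\sqrt{2}-1)$, which yields
\begin{align*}
 \int_{\Omega} \lvert\nabla^2 u\rvert^2\;\d x + \int_{\Omega} \lvert\nabla\phi\rvert^2\;\d x \le C\bigg( \int_{\Omega}\lvert f\rvert^2\;\d x + \lvert\lambda\rvert^2\int_{\Omega}\lvert u\rvert^2\;\d x \bigg)
\end{align*}
with $C$ depending only on $d$ and $\mu$. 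Finally, the missing term $\lvert\lambda\rvert\int_{\Omega}\lvert\nabla u\rvert^2$ is obtained by testing the resolvent equation with $u$ (the pressure drops since $\divergence(u)=0$) and using~\eqref{Eq: Inverse triangle inequality}, which is the only place $\theta$ enters; and for general $f \in \L^2(\Omega;\IC^d)$ one decomposes $f = \IQ f + \nabla g$ with $g \in \H^1_0(\Omega)$, observes that $u$ is unchanged and the pressure only shifts by $g$ at the cost of an extra $\|f\|_{\L^2}$, then passes from $\C^{\infty}_{\sigma}(\overline{\Omega})$-data to $\cL^2_{\sigma}(\Omega)$-data by density and to a general convex domain by the approximation above.

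I expect the crux to be the absorption step, namely checking that all the $\mu$-dependent correction terms genuinely fit inside $\int_{\Omega}\lvert\nabla^2 u\rvert^2 + \int_{\Omega}\lvert\nabla\phi\rvert^2$ with the sharp threshold $\mu^2 + 2\mu < 1$. The two transposed Hessian terms are the subtle ones: a naive Cauchy--Schwarz bound only delivers the smaller symmetric range $\lvert\mu\rvert < \sqrt{2}-1$, so for negative $\mu$ one must instead integrate by parts and use the sign of the second fundamental form to squeeze out the full interval. A secondary technicality is the domain approximation: since the solutions on $\Omega_k$ are not restrictions of the solution on $\Omega$, one extracts weakly convergent subsequences from the uniform $\H^2$- and $\H^1$-bounds, identifies the limit with the weak solution on $\Omega$, and invokes uniqueness.
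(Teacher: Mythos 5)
Your overall strategy is exactly the paper's: the auxiliary fields $v_{\beta} = \nabla u_{\beta} + \mu\,\partial_{\beta}u - \phi\,\e_{\beta}$, the identities $\divergence(v_{\beta}) = \lambda u_{\beta} - f_{\beta}$ and $v_{\beta}\cdot n = 0$ on $\partial\Omega$, Grisvard's formula with convexity killing the $\II((v_{\beta})_{\T};(v_{\beta})_{\T})$ term, the coefficient $1-2\mu$ in front of $\lvert\nabla\phi\rvert^2$ after substituting $\Delta u = \lambda u + \nabla\phi - f$, and the exhaustion of $\Omega$ by smooth convex $\Omega_k$ with a weak-compactness/uniqueness passage to the limit. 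Your shortcut for the $\lvert\lambda\rvert\int_{\Omega}\lvert\nabla u\rvert^2$ term (testing the resolvent equation with $u$) and the reduction of general $f\in\L^2$ to solenoidal data via $f=\IQ f+\nabla g$ are both fine.

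The gap sits precisely at the step you yourself flag as the crux. Your plan for the two transposed-Hessian sums --- estimate each separately by Cauchy--Schwarz, which only yields the condition $2\lvert\mu\rvert+\mu^2<1$, i.e.\@ $\lvert\mu\rvert<\sqrt{2}-1$, and then for negative $\mu$ ``integrate by parts and use the sign of the second fundamental form'' --- does not close as stated. Integrating $\int_{\Omega}\partial_i\partial_j u_k\,\overline{\partial_i\partial_k u_j}\;\d x$ by parts does annihilate the volume term (because $\partial_k\partial_i\partial_j u_k = \partial_i\partial_j\divergence(u)=0$), but it leaves the boundary integral $\int_{\partial\Omega} n_k\,\partial_i\partial_j u_k\,\overline{\partial_i u_j}\;\d\sigma$, which has no sign and is not a second-fundamental-form term; nothing in Theorem~\ref{Thm: Grisvard} absorbs it. The correct fix is purely algebraic: after summing over $\beta$, the two cross sums are \emph{equal}, since relabelling $i\leftrightarrow\beta$ in $\sum\Re\big(\partial_j\partial_i u_{\beta}\,\overline{\partial_i\partial_{\beta}u_j}\big)$ reproduces $\sum\partial_j\partial_{\beta}u_i\,\overline{\partial_i\partial_{\beta}u_j}$ (which is real, as one sees by swapping $i$ and $j$). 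They therefore merge into the single term $(\mu^2+2\mu)\,\Re\sum\partial_j\partial_i u_{\beta}\,\overline{\partial_i\partial_{\beta}u_j}$, and one application of Young's inequality, as in~\eqref{Eq: Cross product asymmetric term}, costs only $\lvert\mu^2+2\mu\rvert\,\lvert\nabla^2 u\rvert^2$. Since $\lvert\mu^2+2\mu\rvert=\lvert(1+\mu)^2-1\rvert<1$ exactly when $0<(1+\mu)^2<2$, this delivers the full interval $(-1,\sqrt{2}-1)$; for $\mu\le 0$ the condition is automatic because $\lvert\mu\rvert(2-\lvert\mu\rvert)=1-(1-\lvert\mu\rvert)^2<1$. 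With this one replacement your argument coincides with the paper's proof.
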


\begin{proof}
Assume first that $\Omega$ has a $\C^{\infty}$-boundary and that $f \in \C^{\infty}_c (\Omega ; \IC^d)$. Then, by virtue of Remark~\ref{Rem: Higher regularity}, the functions $u$ and $\phi$ are smooth up to the boundary. Fix $1 \leq \beta \leq d$ and define
\begin{align}
\label{Eq: Grisvard testfunction}
 v_{\beta} := \big( \{ \delta_{l k} \delta_{\alpha \beta} + \mu \delta_{l \beta} \delta_{k \alpha} \} \partial_l u_{\alpha} - \delta_{k \beta} \phi \big)_{k = 1}^d.
\end{align}
Since $u$ and $\phi$ solve~\eqref{Res} one readily verifies that
\begin{align}
\label{Eq: Divergence of auxiliary function}
 \divergence(v_{\beta}) = \{ \delta_{l k} \delta_{\alpha \beta} + \mu \delta_{l \beta} \delta_{k \alpha} \} \partial_k \partial_l u_{\alpha} - \delta_{k \beta} \partial_k \phi = \partial_l \partial_l u_{\beta} + \mu \partial_{\beta} \partial_{\alpha} u_{\alpha} - \partial_{\beta} \phi = \lambda u_{\beta} - f_{\beta}.
\end{align}
Moreover,
\begin{align}
\label{Eq: Boundary condition of auxiliary function}
 n \cdot v_{\beta} = n_k \{ \delta_{l k} \delta_{\alpha \beta} + \mu \delta_{l \beta} \delta_{k \alpha} \} \partial_l u_{\alpha} - n_k \delta_{k \beta} \phi = n_k \partial_k u_{\beta} + \mu n_k \partial_{\beta} u_k - \phi n_{\beta},
\end{align}
which coincides with the $\beta$th component of
\begin{align*}
 \{ D u + \mu [D u]^{\top} \} n - \phi n
\end{align*}
and thus vanishes on the boundary. The mixed product is calculated as follows (note that we also sum over $\beta$ in this calculation so that in particular $\partial_{\beta} u_{\beta} = 0$)
\begin{align*}
 \partial_j (v_{\beta})_i \overline{\partial_i (v_{\beta})_j} &= \big\{ [\delta_{l i} \delta_{\alpha \beta} + \mu \delta_{l \beta} \delta_{i \alpha}] \partial_j \partial_l u_{\alpha} - \delta_{i \beta} \partial_j \phi \big\} 	\big\{ [\delta_{l^{\prime} j} \delta_{\alpha^{\prime} \beta} + \mu \delta_{l^{\prime} \beta} \delta_{j \alpha^{\prime}}] \overline{\partial_i \partial_{l^{\prime}} u_{\alpha^{\prime}}} - \delta_{j \beta} \overline{\partial_i \phi} \big\} \\
 &= \partial_j \partial_i u_{\beta} \overline{\partial_i \partial_j u_{\beta}} + 2 \mu \Re( \partial_j \partial_i u_{\beta} \overline{\partial_i \partial_{\beta} u_j}) + \mu^2 \partial_j \partial_{\beta} u_i \overline{\partial_i \partial_{\beta} u_j} - 2 \mu \Re( \partial_{\beta} \partial_{\beta} u_i \overline{\partial_i \phi}) \\
 &\qquad + \partial_{\beta} \phi \overline{\partial_{\beta} \phi}.
\end{align*}
Relabelling the index variables yields
\begin{align*}
 \partial_j \partial_{\beta} u_i \overline{\partial_i \partial_{\beta} u_j} = \frac{1}{2} \partial_j \partial_{\beta} u_i \overline{\partial_i \partial_{\beta} u_j} + \frac{1}{2} \partial_i \partial_{\beta} u_j \overline{\partial_j \partial_{\beta} u_i} = \Re(\partial_j \partial_{\beta} u_i \overline{\partial_i \partial_{\beta} u_j}).
\end{align*}
Next, use that $\partial_{\beta} \partial_{\beta} u_i = \lambda u_i - f_i + \partial_i \phi$ to deduce
\begin{align}
\label{Eq: Cross product of auxiliary function}
\begin{aligned}
 \partial_j (v_{\beta})_i \overline{\partial_i (v_{\beta})_j} &= \partial_j \partial_i u_{\beta} \overline{\partial_i \partial_j u_{\beta}} + (2 \mu + \mu^2) \Re(\partial_j \partial_i u_{\beta} \overline{\partial_i \partial_{\beta} u_j}) + (1 - 2 \mu) \partial_{\beta} \phi \overline{\partial_{\beta} \phi} \\
 &\qquad + 2 \mu \Re(f_i \overline{\partial_i \phi}) - 2 \mu \Re(\lambda u_i \overline{\partial_i \phi}).
\end{aligned}
\end{align}
Finally, Young's inequality implies
\begin{align}
\label{Eq: Cross product asymmetric term}
\begin{aligned}
 (2 \mu + \mu^2) \Re(\partial_j \partial_i u_{\beta} \overline{\partial_i \partial_{\beta} u_j}) &\geq - \frac{\lvert 2 \mu + \mu^2 \rvert}{2} \partial_j \partial_i u_{\beta} \overline{\partial_j \partial_i u_{\beta}} - \frac{\lvert 2 \mu + \mu^2 \rvert}{2} \partial_i \partial_{\beta} u_j \overline{\partial_i \partial_{\beta} u_j} \\
 &= - \lvert 2 \mu + \mu^2 \rvert \partial_j \partial_i u_{\beta} \overline{\partial_j \partial_i u_{\beta}}.
\end{aligned}
\end{align}
Use the two rightmost representations of $\divergence(v_{\beta})$ in~\eqref{Eq: Divergence of auxiliary function} and an integration by parts together with the fact that $n \cdot v_{\beta} = 0$ on $\partial \Omega$ (due to~\eqref{Eq: Boundary condition of auxiliary function} and the imposed boundary condition), the representation of $v_{\beta}$ in~\eqref{Eq: Grisvard testfunction}, and the solenoidality of $u$ to deduce
\begin{align}
\label{Eq: Divergence in Grisvard formula}
\begin{aligned}
 \sum_{\beta = 1}^d \int_{\Omega} \lvert \divergence(v_{\beta}) \rvert^2 \; \d x &= \int_{\Omega} \divergence(v_{\beta}) \{\overline{\lambda u_{\beta}} - \overline{f_{\beta}}\} \; \d x \\
 &= - \overline{\lambda} \int_{\Omega} (v_{\beta})_k \overline{\partial_k u_{\beta}} \; \d x - \int_{\Omega} \{ \partial_l \partial_l u_{\beta} + \mu \partial_{\beta} \partial_{\alpha} u_{\alpha} - \partial_{\beta} \phi \} \overline{f_{\beta}} \; \d x \\
 &= - \overline{\lambda} \int_{\Omega} \{ \partial_k u_{\beta} + \mu \partial_{\beta} u_k \} \overline{\partial_k u_{\beta}} \; \d x - \int_{\Omega} \{ \partial_l \partial_l u_{\beta} - \partial_{\beta} \phi \} \overline{f_{\beta}} \; \d x.
\end{aligned}
\end{align}
Finally, apply Theorem~\ref{Thm: Grisvard} with $v = v_{\beta}$ and sum over $\beta$. By~\eqref{Eq: Cross product of auxiliary function} and since the term in~\eqref{Eq: Boundary condition of auxiliary function} vanishes on the boundary one finds after rearranging terms
\begin{align}
\label{Eq: Application of Grisvard formula}
\begin{aligned}
 &\overline{\lambda} \int_{\Omega} \{\partial_k u_{\beta} + \mu \partial_{\beta} u_k \} \overline{\partial_k u_{\beta}} \; \d x + \int_{\Omega} \partial_i \partial_j u_{\beta} \overline{\partial_i \partial_j u_{\beta}} \; \d x + (2 \mu + \mu^2) \int_{\Omega} \Re(\partial_j \partial_i u_{\beta} \overline{\partial_i \partial_{\beta} u_j}) \; \d x \\
 &\qquad + (1 - 2 \mu) \int_{\Omega} \partial_{\beta} \phi \overline{\partial_{\beta} \phi} \; \d x - \int_{\partial \Omega} \II ((v_{\beta})_{\T} ; (v_{\beta})_{\T}) \; \d \sigma \\
 &= - \int_{\Omega} \{ \partial_l \partial_l u_{\beta} - \partial_{\beta} \phi \} \overline{f_{\beta}} \; \d x - 2 \mu \int_{\Omega} \Re(f_i \overline{\partial_i \phi}) \; \d x + 2 \mu \int_{\Omega} \Re(\lambda u_i \overline{\partial_i \phi}) \; \d x.
\end{aligned}
\end{align}
Now, notice the following facts: If $\lambda \in \S_{\theta}$, then $\overline{\lambda} \in \S_{\theta}$. If $\lvert \mu \rvert \leq 1$, then
\begin{align}
\label{Eq: Mixed derivative}
 \{\partial_k u_{\beta} + \mu \partial_{\beta} u_k \} \overline{\partial_k u_{\beta}} = \lvert \nabla u \rvert^2 + \mu \Re (\partial_{\beta} u_k \overline{\partial_k u_{\beta}}) \geq (1 - \lvert \mu \rvert) \lvert \nabla u \rvert^2 \geq 0.
\end{align}
If $\lvert 2 \mu + \mu^2 \rvert < 1$, then the sum of the second and third integrals on the left-hand side of~\eqref{Eq: Application of Grisvard formula} is non-negative due to~\eqref{Eq: Cross product asymmetric term}. If $1 - 2 \mu > 0$, then the fourth integral on the left-hand side of~\eqref{Eq: Application of Grisvard formula} is non-negative and finally, the convexity of $\Omega$ implies that the fifth integral is non-positive. This results in the condition $-1 < \mu < \sqrt{2} - 1$, which is the imposed condition on $\mu$. Thus, the left-hand side is of the form $z + \alpha$ for some $z \in \overline{\S_{\theta}}$ and $\alpha \geq 0$. Consequently, by~\eqref{Eq: Inverse triangle inequality} there exists a constant $C_{\theta} > 0$ depending only on $\theta$, such that
\begin{align*}
 &\lvert \lambda \rvert \int_{\Omega} \{\partial_k u_{\beta} + \mu \partial_{\beta} u_k \} \overline{\partial_k u_{\beta}} \; \d x + \int_{\Omega} \partial_i \partial_j u_{\beta} \overline{\partial_i \partial_j u_{\beta}} \; \d x + (2 \mu + \mu^2) \int_{\Omega} \Re(\partial_j \partial_i u_{\beta} \overline{\partial_i \partial_{\beta} u_j}) \; \d x \\
 &\qquad + (1 - 2 \mu) \int_{\Omega} \partial_{\beta} \phi \overline{\partial_{\beta} \phi} \; \d x - \int_{\partial \Omega} \II ((v_{\beta})_{\T} ; (v_{\beta})_{\T}) \; \d \sigma \\
 &\leq C_{\theta} \bigg( \int_{\Omega} (\lvert \Delta u \rvert + (1 + 2 \lvert \mu \rvert) \lvert \nabla \phi \rvert) \lvert f \rvert \; \d x + 2 \lvert \lambda \rvert \lvert \mu \rvert \int_{\Omega} \lvert u \rvert \lvert \nabla \phi \rvert \; \d x \bigg).
\end{align*}
By virtue of~\eqref{Eq: Mixed derivative},~\eqref{Eq: Cross product asymmetric term}, and the convexity of $\Omega$ one finds
\begin{align*}
 &\lvert \lambda \rvert (1 - \lvert \mu \rvert) \int_{\Omega} \lvert \nabla u \rvert^2 \; \d x + (1 - \lvert 2 \mu + \mu^2 \rvert) \int_{\Omega} \lvert \nabla^2 u \rvert^2 \; \d x + (1 - 2 \mu) \int_{\Omega} \lvert \nabla \phi \rvert^2 \; \d x \\
 &\leq C_{\theta} \bigg( \int_{\Omega} (\lvert \Delta u \rvert + (1 + 2 \lvert \mu \rvert) \lvert \nabla \phi \rvert) \lvert f \rvert \; \d x + 2 \lvert \lambda \rvert \lvert \mu \rvert \int_{\Omega} \lvert u \rvert \lvert \nabla \phi \rvert \; \d x \bigg).
\end{align*}
The desired inequality now follows for $f \in \C_c^{\infty} (\Omega ; \IC^d)$ by an application of Young's inequality and for $f \in \L^2 (\Omega ; \IC^d)$ by density. \par
To conclude the proof, we approximate an arbitrary bounded and convex domain $\Omega$ by smooth, bounded, and convex domains $\Omega_k$ as described in Remark~\ref{Rem: Approximating sequence}. Let $R_{\Omega_k}$ denote the restriction operator to $\Omega_k$, $\IQ_k$ be the Helmholtz projection on $\Omega_k$, and $B_{\mu , k}$ the Stokes operator subject to Neumann-type boundary conditions on $\Omega_k$. Define $f_k := R_{\Omega_k} f \in \L^2 (\Omega_k ; \IC^d)$, $u_k := (\lambda + B_{\mu , k})^{-1} \IQ_k f_k$, and define $u := (\lambda + B_{\mu})^{-1} \IQ f$. Then
\begin{align*}
 &\lambda \int_{\Omega_k} (u - u_k) \cdot \overline{(u - u_k)} \; \d x + \int_{\Omega_k} a^{\alpha \beta}_{j l} (\mu) \partial_l (u_{\beta} - (u_k)_{\beta}) \overline{\partial_j (u_{\alpha} - (u_k)_{\alpha})} \; \d x \\
 &= \lambda \int_{\Omega_k} u \cdot \overline{u} \; \d x + (\lambda - \overline{\lambda}) \int_{\Omega_k} u_k \cdot \overline{u_k} \; \d x + \overline{\lambda} \int_{\Omega_k} u_k \cdot \overline{u_k} \; \d x - \lambda \int_{\Omega_k} u_k \cdot \overline{u} \; \d x - \lambda \int_{\Omega_k} u \cdot \overline{u_k} \; \d x \\
 &\qquad + \int_{\Omega_k} a^{\alpha \beta}_{j l} (\mu) \partial_l u_{\beta} \overline{\partial_j u_{\alpha}} \; \d x + \int_{\Omega_k} a^{\alpha \beta}_{j l} (\mu) \partial_l (u_k)_{\beta} \overline{\partial_j (u_k)_{\alpha}} \; \d x \\
 &\qquad - \int_{\Omega_k} a^{\alpha \beta}_{j l} (\mu) \partial_l u_{\beta} \overline{\partial_j (u_k)_{\alpha}} \; \d x - \int_{\Omega_k} a^{\alpha \beta}_{j l} (\mu) \partial_l (u_k)_{\beta} \overline{\partial_j u_{\alpha}} \; \d x \\
 &= \int_{\Omega \setminus \Omega_k} f \cdot \overline{u} \; \d x - \bigg( \lambda \int_{\Omega \setminus \Omega_k} u \cdot \overline{u} \; \d x + \int_{\Omega \setminus \Omega_k} a^{\alpha \beta}_{j l} (\mu) \partial_l u_{\beta} \overline{\partial_j u_{\alpha}} \; \d x \bigg) \\
 &\qquad + (\lambda - \overline{\lambda}) \int_{\Omega_k} (u - u_k) \cdot \overline{(u - u_k)} \; \d x - (\lambda - \overline{\lambda}) \int_{\Omega_k} (u - u_k) \cdot \overline{u} \; \d x - \int_{\Omega_k} (u - u_k) \cdot \overline{f} \; \d x.
\end{align*}
Rearranging terms yields
\begin{align}
\label{Eq: Weak to local}
\begin{aligned}
\overline{\lambda} \int_{\Omega_k} \lvert u - u_k \rvert^2 \; \d x &+ \int_{\Omega_k} a^{\alpha \beta}_{j l} (\mu) \partial_l (u_{\beta} - (u_k)_{\beta}) \overline{\partial_j (u_{\alpha} - (u_k)_{\alpha})} \; \d x \\
 &= \int_{\Omega \setminus \Omega_k} f \cdot \overline{u} \; \d x - \bigg( \lambda \int_{\Omega \setminus \Omega_k} u \cdot \overline{u} \; \d x + \int_{\Omega \setminus \Omega_k} a^{\alpha \beta}_{j l} (\mu) \partial_l u_{\beta} \overline{\partial_j u_{\alpha}} \; \d x \bigg) \\
 &\qquad - (\lambda - \overline{\lambda}) \int_{\Omega_k} (u - u_k) \cdot \overline{u} \; \d x - \int_{\Omega_k} (u - u_k) \cdot \overline{f} \; \d x.
\end{aligned}
\end{align}
Since $u \in \H^1 (\Omega ; \IC^d)$ and $f \in \L^2 (\Omega ; \IC^d)$, we find by~\eqref{Eq: Ellipticity Neumann} and~\eqref{Eq: Inverse triangle inequality}, that $(u - u_k)_{k \in \IN}$ defines a bounded sequence in $\L^2 (\Omega ; \IC^d)$ and $(\nabla u - \nabla u_k)_{k \in \IN}$ defines a bounded sequence in $\L^2 (\Omega ; \IC^{d^2})$. Here, we regard $u - u_k$ and $\nabla u - \nabla u_k$ to be zero on $\Omega \setminus \Omega_k$. Thus, there exist subsequences (again denoted by the same indices) and weak limits $v \in \L^2 (\Omega ; \IC^d)$ and $w \in \L^2 (\Omega ; \IC^{d^2})$, such that $u - u_k \rightharpoonup v$ and $\nabla u - \nabla u_k \rightharpoonup w$ as $k \to \infty$. One directly verifies that $v$ is weakly differentiable with $\nabla v = w$ and that the distributional divergence of $v$ is zero. It follows that $v \in \cH^1_{\sigma} (\Omega)$. Now, for $\varphi \in \cH^1_{\sigma} (\Omega)$ one finds, since $u$ and $u_k$ solve their respective equations, that
\begin{align*}
 \lambda \int_{\Omega} v \cdot \overline{\varphi} \; \d x &+ \int_{\Omega} a_{j l}^{\alpha \beta} (\mu) \partial_l v_{\beta} \cdot \overline{\partial_j \varphi_{\alpha}} \; \d x \\
 &= \lambda \lim_{k \to \infty} \int_{\Omega_k} (u - u_k) \cdot \overline{\varphi} \; \d x + \lim_{k \to \infty} \int_{\Omega_k} a_{j l}^{\alpha \beta} (\mu) \partial_l (u_{\beta} - (u_k)_{\beta}) \cdot \overline{\partial_j \varphi_{\alpha}} \; \d x \\
 &= 0.
\end{align*}
In follows that $v$ is zero. Going back to~\eqref{Eq: Weak to local}, one even finds that $u_k \to u$ in $\H^1_{\mathrm{loc}} (\Omega ; \IC^d)$. Since due to the first part of the proof, also the sequence $(\nabla^2 u_k)_{k \in \IN}$ is bounded in $\L^2 (\Omega ; \IC^{d^3})$, where $\nabla^2 u_k$ is regarded to be zero in $\Omega \setminus \Omega_k$, we find again by picking a weakly convergent subsequence that $u$ is in $\H^2 (\Omega ; \IC^d)$ and that
\begin{align}
\label{Eq: Fatou}
 \| \nabla^2 u \|_{\L^2 (\Omega ; \IC^{d^3})} \leq \liminf_{k \to \infty} \| \nabla^2 u_k \|_{\L^2 (\Omega_k ; \IC^{d^3})}.
\end{align}
If $\phi_k$ denotes the pressure such that $\lambda u_k - \Delta u_k + \nabla \phi_k = f_k$ holds in $\Omega_k$ (and satisfies the appropriate boundary condition), then we find by virtue of~\eqref{Eq: Variational Neumann} with $\varphi_k := \nabla \Delta_D^{-1}\chi_{\Omega_k} (\phi - \phi_k)$ where $\Delta_D$ denotes the Dirichlet Laplacian on $\Omega$ that
\begin{align*}
 &\int_{\Omega_k} \lvert \phi - \phi_k \rvert^2 \; \d x \\
 &= \int_{\Omega_k} (\phi - \phi_k) \, \overline{\divergence \varphi_k} \; \d x \\
 &= - \int_{\Omega \setminus \Omega_k} \phi \, \overline{\divergence \varphi_k} \; \d x + \lambda \int_{\Omega} u \cdot \overline{\varphi_k} \; \d x + \int_{\Omega} a^{\alpha \beta}_{j l} (\mu) \partial_l u_{\beta} \overline{\partial_j (\varphi_k)_{\alpha}} \; \d x \\
 &\qquad - \int_{\Omega} f \cdot \overline{\varphi_k} \; \d x - \bigg( \lambda \int_{\Omega_k} u_k \cdot \overline{\varphi_k} \; \d x + \int_{\Omega_k} a^{\alpha \beta}_{j l} (\mu) \partial_l (u_k)_{\beta} \overline{\partial_j (\varphi_k)_{\alpha}} \; \d x \bigg) + \int_{\Omega_k} f \cdot \overline{\varphi_k} \; \d x \\
 &= - \int_{\Omega \setminus \Omega_k} \phi \, \overline{\divergence \varphi_k} \; \d x + \lambda \int_{\Omega \setminus \Omega_k} u \cdot \overline{\varphi_k} \; \d x + \int_{\Omega \setminus \Omega_k} a^{\alpha \beta}_{j l} (\mu) \partial_l u_{\beta} \overline{\partial_j (\varphi_k)_{\alpha}} \; \d x \\
 &\qquad - \int_{\Omega \setminus \Omega_k} f \cdot \overline{\varphi_k} \; \d x + \lambda \int_{\Omega_k} (u - u_k) \cdot \overline{\varphi_k} \; \d x + \int_{\Omega_k} a^{\alpha \beta}_{j l} (\mu) \partial_l (u_{\beta} - (u_k)_{\beta}) \overline{\partial_j (\varphi_k)_{\alpha}} \; \d x.
\end{align*}
Since $\Omega$ is convex, it holds $\| \nabla \varphi_k \|_{\L^2 (\Omega ; \IC^{d^2})} \leq \| \phi - \phi_k \|_{\L^2 (\Omega_k)}$. This implies by Poincar\'e's inequality and $\Omega_k \subset \Omega$ that $\| \varphi_k \|_{\L^2 (\Omega ; \IC^d)} \leq C \diam(\Omega) \| \phi - \phi_k \|_{\L^2 (\Omega_k)}$, where $C > 0$ depends only on $d$. Thus, by virtue of Young's inequality, one can absorb $\| \phi - \phi_k \|_{\L^2 (\Omega_k)}$ to the left-hand side of the inequality above so that the convergences proven above together with the facts that $\phi$, $u$, and $f$ are $\L^2$-integrable on $\Omega$ yield that $\phi - \phi_k \to 0$ as $k \to \infty$ in $\L^2 (\Omega)$, where $\phi - \phi_k$ is defined to be zero in $\Omega \setminus \Omega_k$. Finally, since each $\phi_k$ lies in $\H^1 (\Omega_k)$ and respects the estimate from the formulation of the theorem, we find that $\phi \in \H^1 (\Omega)$ and that
\begin{align*}
 \| \nabla \phi \|_{\L^2 (\Omega)} \leq \liminf_{k \to \infty} \| \nabla \phi_k \|_{k \to \infty}.
\end{align*}
This proves the desired estimate for $u$ and $\phi$.
\end{proof}

\begin{remark}
For a similar approximation scheme in the case of no-slip boundary conditions see~\cite{Monniaux}.
\end{remark}

Notice that the sectoriality of $B_{\mu}$ (by Proposition~\ref{Prop: Resolvent}) implies the validity of the algebraic and topological decomposition $\cL^2_{\sigma} (\Omega)$
\begin{align*}
 \cL^2_{\sigma} (\Omega) = \ker(B_{\mu}) \oplus \overline{\Rg(B_{\mu})},
\end{align*}
where $\ker(B_{\mu})$ denotes the kernel of $B_{\mu}$ and $\Rg(B_{\mu})$ the range of $B_{\mu}$. See~\cite[Prop.~2.2.1]{Haase} for the corresponding statement on the decomposition.

\begin{corollary}
Let $\Omega \subset \IR^d$, $d \geq 2$, be a bounded convex domain and $\mu \in (-1 , \sqrt{2} - 1)$. Then for all $u \in \dom(B_{\mu}) \cap \overline{\Rg(B_{\mu})}$ and the associated pressure $\phi$ one finds that $u \in \H^2(\Omega ; \IC^d)$ and $\phi \in \H^1(\Omega)$. Moreover, there exists $C > 0$ depending only on $d$ and $\mu$ such that
\begin{align*}
 \int_{\Omega} \lvert \nabla^2 u \rvert^2 \; \d x + \int_{\Omega} \lvert \nabla \phi \rvert^2 \; \d x \leq C \int_{\Omega} \lvert B_{\mu} u \rvert^2 \; \d x.
\end{align*}
\end{corollary}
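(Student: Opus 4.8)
The strategy is to realise the pair $(u , \phi)$ as the solution of a resolvent problem with an arbitrarily small parameter and to pass to the limit in the estimate of Theorem~\ref{Thm: H2 regularity on convex domains}. Fix $u \in \dom(B_{\mu}) \cap \overline{\Rg(B_{\mu})}$ and put $g := B_{\mu} u \in \cL^2_{\sigma} (\Omega)$. Let $\phi \in \L^2 (\Omega)$ be the associated pressure, i.e.\@ the unique function for which $u$ and $\phi$ satisfy the Neumann variational identity~\eqref{Eq: Variational Neumann} with $\lambda = 0$ and right-hand side $g$ (uniqueness of the Neumann pressure is the one recorded in Section~\ref{On uniform pressure estimates}). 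Fix once and for all $\theta := \pi / 2$, so that the constant in Theorem~\ref{Thm: H2 regularity on convex domains} depends only on $d$ and $\mu$.

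Let $\eps > 0$. Since $(\eps + B_{\mu}) u = \eps u + B_{\mu} u = \eps u + g$ and $\eps + B_{\mu}$ is invertible by Proposition~\ref{Prop: Resolvent}, the velocity part of the solution to~\eqref{Res} subject to~\eqref{Neu} with parameter $\eps$ and right-hand side $f_{\eps} := \eps u + g \in \cL^2_{\sigma} (\Omega) \subset \L^2 (\Omega ; \IC^d)$ equals $(\eps + B_{\mu})^{-1} f_{\eps} = u$; moreover the associated pressure identity $\eps u - \Delta u + \nabla \psi = \eps u + g$ reduces, after cancelling the $\eps u$ terms, to $- \Delta u + \nabla \psi = g$, which by uniqueness of the Neumann pressure forces $\psi = \phi$. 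Applying Theorem~\ref{Thm: H2 regularity on convex domains} to the data $(\eps , f_{\eps})$ therefore gives $u \in \H^2 (\Omega ; \IC^d)$, $\phi \in \H^1 (\Omega)$, and
\[
 \int_{\Omega} \lvert \nabla^2 u \rvert^2 \; \d x + \int_{\Omega} \lvert \nabla \phi \rvert^2 \; \d x \leq C \bigg( \int_{\Omega} \lvert \eps u + g \rvert^2 \; \d x + \eps^2 \int_{\Omega} \lvert u \rvert^2 \; \d x \bigg)
\]
for a constant $C = C(d , \mu)$ and every $\eps > 0$.

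Since the left-hand side does not depend on $\eps$, whereas the right-hand side tends to $C \int_{\Omega} \lvert g \rvert^2 \, \d x = C \int_{\Omega} \lvert B_{\mu} u \rvert^2 \, \d x$ as $\eps \to 0^+$ (because $\| \eps u + g \|_{\L^2}^2 \to \| g \|_{\L^2}^2$ and $\eps^2 \| u \|_{\L^2}^2 \to 0$), passing to the limit yields the asserted inequality. The only points requiring care are the identification of $(u , \phi)$ with the resolvent solution for parameter $\eps$ — which is immediate from the uniqueness statements in Proposition~\ref{Prop: Resolvent} and from the uniqueness of the Neumann pressure — and the realisation that it is this limiting procedure, rather than a Poincar\'e-type bound $\| u \|_{\L^2} \leq C(\Omega) \| B_{\mu} u \|_{\L^2}$ on $\overline{\Rg(B_{\mu})}$, that keeps the final constant independent of $\Omega$; beyond that I do not anticipate a genuine obstacle. (The hypothesis $u \in \overline{\Rg(B_{\mu})}$ is in fact not used: for $u \in \ker(B_{\mu})$ the ellipticity~\eqref{Eq: Ellipticity Neumann} forces $\nabla u \equiv 0$, hence $\nabla \phi \equiv 0$, and both sides of the inequality vanish.)
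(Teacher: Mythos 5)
Your proof is correct, and it takes a genuinely different and in fact slicker route than the paper. The paper approximates $u$ by $u_{\lambda} := (\lambda + B_{\mu})^{-1} B_{\mu} u$ as $\lambda \to 0$, which is where the hypothesis $u \in \overline{\Rg(B_{\mu})}$ enters (sectoriality gives $u_{\lambda} \to u$ only on $\overline{\Rg(B_{\mu})}$); it then has to prove convergence of the associated pressures via the Bogovski\u{\i} operator and pass to weak limits to transfer the $\H^2$- and $\H^1$-bounds from the approximants to $(u , \phi)$. You instead observe that $u$ \emph{itself} is exactly the resolvent solution for the parameter $\eps$ and the perturbed right-hand side $f_{\eps} = \eps u + B_{\mu} u \in \cL^2_{\sigma} (\Omega)$, and that the uniqueness of the Neumann pressure identifies the corresponding pressure with $\phi$; Theorem~\ref{Thm: H2 regularity on convex domains} then applies directly to the pair $(u , \phi)$, and the only limit you take is $\eps \to 0$ in a scalar inequality whose left-hand side is independent of $\eps$. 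This buys you three things: no function-space limits or lower-semicontinuity arguments at all, no separate pressure-convergence step, and — as you note — the hypothesis $u \in \overline{\Rg(B_{\mu})}$ becomes superfluous, so your argument actually proves the estimate for every $u \in \dom(B_{\mu})$ (consistent with the observation that on $\ker(B_{\mu})$ ellipticity forces $u$ constant and $\phi = 0$, so both sides vanish). The one point worth stating explicitly, which you do address, is that ``the associated pressure'' for $\lambda = 0$ is the unique $\phi \in \L^2 (\Omega)$ satisfying the variational identity~\eqref{Eq: Variational Neumann} with $\lambda = 0$ and right-hand side $B_{\mu} u$ against all $v \in \H^1 (\Omega ; \IC^d)$; with that convention your cancellation of the $\eps u$ terms and the identification $\psi = \phi$ are immediate.
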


\begin{proof}
First of all, notice that the statement below concerning the strong convergence of resolvent operators follow from~\cite[Prop.~2.2.1]{Haase}. Define $f := B_{\mu} u$. The solution $u$ is approximated by $u_{\lambda} := (\lambda + B_{\mu})^{-1} f$ as $\lambda \in \S_{\pi / 2}$ tends to zero. Indeed, since $f = B_{\mu} u$ and since $u \in \overline{\Rg (B_{\mu})}$ by assumption, one has due to the sectoriality of $B_{\mu}$, see Proposition~\ref{Prop: Resolvent}, that
\begin{align*}
 u_{\lambda} = B_{\mu} (\lambda + B_{\mu})^{-1} u \to u \quad \text{in} \quad \cL^2_{\sigma} (\Omega) \quad \text{as} \quad \lambda \to 0.
\end{align*}
Furthermore, the sectoriality implies that $B_{\mu} u_{\lambda}$ tends to $f$ in $\cL^2_{\sigma} (\Omega)$ and as well that $\lambda u_{\lambda} \to 0$ tends to zero in $\cL^2_{\sigma} (\Omega)$ as $\lambda \in \S_{\pi / 2}$. The convergence of the associated pressures $\phi_{\lambda}$ in $\L^2 (\Omega)$ is proven as before by invoking Bogovski\u{\i}'s operator. Finally, the convergence in the $\H^2 (\Omega ; \IC^d)$- and $\H^1 (\Omega)$-norms of the respective sequences follows by employing the inequality proven in Theorem~\ref{Thm: H2 regularity on convex domains} and the fact that the ``right-hand side'' $B_{\mu} u_{\lambda}$ of the equations for $u_{\lambda}$ and $\phi_{\lambda}$ tend to $f$ in $\cL^2_{\sigma} (\Omega)$. The desired inequality follows from Theorem~\ref{Thm: H2 regularity on convex domains} by taking limits.
\end{proof}

\begin{problem}
Prove or disprove Theorem~\ref{Thm: H2 regularity on convex domains} for $\mu \in [\sqrt{2} - 1 , 1]$.
\end{problem}

In the case of no-slip boundary conditions, the $\H^2$-regularity is known in two and three dimensions if convex polygonal/polyhedral domains are considered, see~\cite{Kellogg_Osborn, Dauge, Mazya_Rossmann}. It would be interesting to know if this property holds on arbitrary convex domains.

\begin{problem}
Prove or disprove Theorem~\ref{Thm: H2 regularity on convex domains} in the case of no-slip boundary conditions.
\end{problem}

In the following, we start by working with cubes in $\IR^d$. By this we mean a non-degenerate cube of the form $(a , b)^d$, i.e., its Lebesgue measure is non-zero and its sides are parallel to the axes. Sometimes we will use the notation $Q (x_0 , r)$ to denote a cube with center $x_0$ and diameter $r$. We continue by deriving local $\H^2$-estimates and start with a technical lemma.

\begin{lemma}
\label{Lem: Piecewise regular}
Let $\Omega \subset \IR^d$ be a bounded convex domain with $\C^2$-boundary and let $Q$ be a cube. Then $Q \cap \Omega$ is piecewise $\C^2$-regular, i.e, there exist sets $\Gamma_0$ and $\Gamma_1$ such that $\partial [Q \cap \Omega] = \Gamma_0 \cup \Gamma_1$, where $\Gamma_0$ has surface measure zero and where for each $x \in \Gamma_1$ the boundary part of $Q \cap \Omega$ is $\C^2$-regular in a neighborhood of $x$. In particular, $\Gamma_1$ satisfies $\Gamma_1 \cap \Omega \subset \partial Q \cap \Omega$.
\end{lemma}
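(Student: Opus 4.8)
\emph{Proof plan.} First I would record two structural observations. Since $Q$ and $\Omega$ are convex, $Q\cap\Omega$ is a bounded convex — hence Lipschitz — domain, so Grisvard's piecewise-$\C^2$ framework from Theorem~\ref{Thm: Piecewise Grisvard} applies once the decomposition is in place. Second, the guiding heuristic: near a boundary point of $Q\cap\Omega$ the set $Q\cap\Omega$ locally coincides with $Q$, or with $\Omega$, or forms a genuine ``corner'' where a facet of $Q$ meets $\partial\Omega$ transversally; only the corners, together with the lower-dimensional skeleton of $Q$, are singular, and the plan is to show these occupy a set of vanishing surface measure while everything else is a $\C^2$-graph point.

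Concretely I would let $S_Q$ be the union of the faces of $Q$ of dimension $\le d-2$, so $\Hd(S_Q)=0$, and for each of the $2d$ facets $G$ of $Q$ let $H_G$ be the hyperplane containing $G$. A short convexity argument classifies $H_G$ into three cases: either $H_G\cap\overline\Omega=\emptyset$; or $H_G\cap\Omega=\emptyset$ while $H_G\cap\partial\Omega\neq\emptyset$, in which case $H_G$ is a supporting hyperplane of $\Omega$ and, $\partial\Omega$ being $\C^1$, $H_G=T_x\partial\Omega$ for every $x\in H_G\cap\partial\Omega$; or $H_G\cap\Omega\neq\emptyset$, in which case $\Omega\cap H_G$ is a nonempty bounded convex open subset of $H_G\cong\IR^{d-1}$, so $\partial\Omega\cap H_G$ is the relative boundary of a convex body in $\IR^{d-1}$, hence a Lipschitz $(d-2)$-surface with $\Hd(\partial\Omega\cap H_G)=0$. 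I then set
\[
 \Gamma_0 := S_Q \cup \bigcup_{G:\ H_G\cap\Omega\neq\emptyset}\big(\partial\Omega\cap G\big), \qquad \Gamma_1 := \partial[Q\cap\Omega]\setminus\Gamma_0,
\]
and the classification immediately gives $\Hd(\Gamma_0)=0$.

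To check that each $x_0\in\Gamma_1$ is a $\C^2$-graph point I would run through the exhaustive cases for a point of $\partial[Q\cap\Omega]$ (a point outside $\overline\Omega$ cannot occur). If $x_0\in\Omega$, then a small ball about $x_0$ meets $Q\cap\Omega$ in $Q$, so locally $\partial[Q\cap\Omega]=\partial Q$; since $x_0\notin S_Q$ it lies in the relative interior of a facet, where this is a piece of a hyperplane, i.e.\ a $\C^\infty$-graph. If $x_0\in\partial\Omega$ and $x_0$ is interior to $Q$, then locally $Q\cap\Omega=\Omega$, so $\partial[Q\cap\Omega]=\partial\Omega$, which is $\C^2$. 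Finally, if $x_0\in\partial\Omega\cap\partial Q$, then $x_0\notin S_Q$ places $x_0$ in the relative interior of some facet $G$, and $x_0\notin\Gamma_0$ forces $H_G\cap\Omega=\emptyset$; since also $x_0\in\partial\Omega\cap H_G$, the supporting case applies and $H_G=T_{x_0}\partial\Omega$. In coordinates with $H_G=\{y_d=0\}$ and $\Omega$ lying locally in $\{y_d<0\}$, convexity makes $\Omega$ locally the subgraph $\{y_d<\psi(y')\}$ of a $\C^2$ concave $\psi\le0$ with $\psi(0)=0$, $\nabla\psi(0)=0$; as $Q$ lies locally on one side of $H_G$, either $Q\cap\Omega$ is locally empty — impossible since $x_0\in\partial[Q\cap\Omega]$ — or $Q$ sits on the $\{y_d<0\}$ side, whence $Q\cap\Omega=\{y_d<\psi\}=\Omega$ locally, so $\partial[Q\cap\Omega]=\partial\Omega$ there, which is $\C^2$. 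The ``in particular'' is then immediate: a point of $\Gamma_1\cap\Omega$ cannot lie on $\partial\Omega$, and being on $\partial[Q\cap\Omega]$ yet interior to $\Omega$ it cannot be interior to $Q$, hence lies in $\partial Q\cap\Omega$.

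The main obstacle I anticipate is precisely the last case, $x_0\in\partial\Omega\cap\partial Q$ with $H_G$ tangent to $\partial\Omega$: here $\partial Q\cap\partial\Omega$ can carry positive surface measure (e.g.\ when a facet of $Q$ lies in a hyperplane supporting a flat portion of $\partial\Omega$), so such points must \emph{not} be thrown into $\Gamma_0$, and one genuinely needs convexity to see that the two boundaries merge into a single $\C^2$ hypersurface there. The other ingredients — the classification of hyperplane sections of a convex body and the vanishing of $\Hd$ on $(d-2)$-dimensional Lipschitz sets — are routine.
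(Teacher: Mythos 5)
Your proof is correct and follows essentially the same route as the paper: put the cube's $(d-2)$-skeleton together with the relative boundaries of the face sections $F\cap\overline{\Omega}$ (equivalently, $\partial\Omega\cap G$ for facets $G$ whose hyperplane meets $\Omega$) into $\Gamma_0$, use convexity of these sections to see that they are $(d-2)$-dimensional and hence of vanishing surface measure, and verify that every remaining boundary point is a $\C^2$-graph point. Your explicit treatment of the tangential case, where a facet's hyperplane supports $\partial\Omega$, is somewhat more detailed than the paper's, which subsumes it in the observation that near relative-interior points the face section locally coincides with the whole face.
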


\begin{proof}
First of all, notice that $Q \cap \Omega$ is a bounded convex domain and thus in particular a bounded Lipschitz domain, see~\cite[Cor.~1.2.2.3]{Grisvard}. Notice that due to the Lipschitz boundary of $Q \cap \Omega$ the surface measure is equivalent to the $(d - 1)$-dimensional Hausdorff measure in $\IR^d$. To decompose the boundary of $Q \cap \Omega$, notice that elementary set theoretic manipulations yield
\begin{align*}
 \partial (Q \cap \Omega) \subset ((\partial Q) \cap \overline{\Omega}) \cup (\overline{Q} \cap (\partial \Omega)) &=((\partial Q) \cap \overline{\Omega}) \cup (Q \cap (\partial \Omega)).
\end{align*}
Notice that any point in $Q \cap (\partial \Omega)$ has a neighborhood with an at least $\C^2$-regular boundary. Thus, we consider $(\partial Q) \cap \overline{\Omega}$ more closely. \par
Let $\cN \subset \partial Q$ denote the edges of the cube $Q$. Clearly, its $(d - 1)$-dimensional Hausdorff measure is zero. Let $F \subset \partial Q$ be a face of $Q$ (we consider $F$ to be closed). Since $F$ and $\overline{\Omega}$ are convex, also $F \cap \overline{\Omega}$ is convex. Notice that $F \cap \overline{\Omega}$ is congruent to a convex set in $\IR^{d - 1}$. As convex sets are Lipschitz regular, the boundary of $F \cap \overline{\Omega}$ (with respect to the subspace topology of $F$) has zero $(d - 1)$-dimensional Hausdorff measure. If $x$ is in the interior of $F \cap \overline{\Omega}$ (with respect to the subspace topology of $F$) and if $x \notin \cN$, then there is $\eps > 0$ such that $F \cap \overline{\Omega} \cap B(x , \eps) = F \cap B(x , \eps)$. Thus, in this neighborhood, $F \cap \overline{\Omega}$ can be represented as the graph of a smooth function. Denote the boundary of $F \cap \overline{\Omega}$ taken with respect to the subspace topology by $\partial_F (F \cap \overline{\Omega})$ and the interior by $\mathrm{int}_F (F \cap \overline{\Omega})$ and define
\begin{align*}
 \Gamma_0 := \Big(\cN \cup \bigcup_{F \text{ face of } Q} \partial_F (F \cap \overline{\Omega}) \Big) \cap \partial (Q \cap \Omega)
\end{align*}
and
\begin{align*}
 \Gamma_1 := \bigg\{\bigcup_{F \text{ face of } Q} (\mathrm{int}_F (F \cap \overline{\Omega}) \setminus \cN) \cup (Q \cap (\partial \Omega)) \bigg\} \cap \partial (Q \cap \Omega).
\end{align*}
Notice that $\Gamma_1 \cap \Omega \subset \partial Q \cap \Omega$ holds by construction.
\end{proof}

\begin{lemma}
\label{Lem: Localized regularity estiamtes}
Let $\Omega \subset \IR^d$, $d \geq 2$, be a bounded, convex, and smooth domain, $\mu \in (-1 , \sqrt{2} - 1)$, and $\theta \in (0 , \pi)$. Then there exists $C > 0$ depending only on $d$, $\mu$, and $\theta$ such that smooth functions (smooth up to the boundary) $u : Q \cap \Omega \to \IC^d$ and $\phi : Q \cap \Omega \to \IC$ solving $\lambda u - \Delta u + \nabla \phi = 0$ and $\divergence(u) = 0$ in $Q \cap \Omega$ and which satisfy $\{ D u + \mu [D u]^{\top} \} n - \phi n = 0$ on $Q \cap \partial \Omega$ satisfy
\begin{align*}
 &\lvert \lambda \rvert \int_{Q \cap \Omega} \lvert \nabla u \rvert^2 \; \d x + \int_{Q \cap \Omega} \lvert \nabla^2 u \rvert^2 \; \d x + \int_{Q \cap \Omega} \lvert \nabla \phi \rvert^2 \; \d x \\
 &\qquad\leq C \bigg( \lvert \lambda \rvert^2 \int_{Q \cap \Omega} \lvert u \rvert^2 \; \d x + \int_{(\partial Q) \cap \Omega}\big(  \lvert \nabla^2 u \rvert \lvert \nabla u \rvert + \lvert \nabla^2 u \rvert \lvert \phi \rvert + \lvert \nabla \phi \rvert \lvert \nabla u \rvert + \lvert \nabla \phi \rvert \lvert \phi \rvert \big) \; \d \sigma \bigg).
\end{align*}
\end{lemma}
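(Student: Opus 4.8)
The plan is to rerun the proof of Theorem~\ref{Thm: H2 regularity on convex domains} on $Q \cap \Omega$ in place of $\Omega$, replacing Grisvard's formula Theorem~\ref{Thm: Grisvard} by its piecewise counterpart Theorem~\ref{Thm: Piecewise Grisvard}, and to obtain the term $\lvert \lambda \rvert \int_{Q \cap \Omega} \lvert \nabla u \rvert^2 \, \d x$ by a separate energy identity. By Lemma~\ref{Lem: Piecewise regular} the set $Q \cap \Omega$ is a bounded, convex, piecewise $\C^2$-regular domain with a decomposition $\partial (Q \cap \Omega) = \Gamma_0 \cup \Gamma_1$, $\Gamma_0$ of surface measure zero, in which $\Gamma_1 \cap \Omega \subseteq (\partial Q) \cap \Omega$; along $\Gamma_1 \cap \Omega$ the boundary locally coincides with a face of the cube $Q$ and is therefore flat, so its second fundamental form vanishes there, while along $\Gamma_1 \cap \partial \Omega$ it locally coincides with $\partial \Omega$, so its second fundamental form is non-positive by convexity, cf.~\eqref{Eq: Convexity 1}--\eqref{Eq: Convexity 2}. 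As $u$ and $\phi$ are smooth up to $\overline{Q \cap \Omega}$, so are the fields $v_\beta$ from~\eqref{Eq: Grisvard testfunction}, and Theorem~\ref{Thm: Piecewise Grisvard} applies to each of them on $Q \cap \Omega$. Throughout, write $\Sigma := \int_{(\partial Q) \cap \Omega} ( \lvert \nabla^2 u \rvert \lvert \nabla u \rvert + \lvert \nabla^2 u \rvert \lvert \phi \rvert + \lvert \nabla \phi \rvert \lvert \nabla u \rvert + \lvert \nabla \phi \rvert \lvert \phi \rvert ) \, \d \sigma$ for the surface integral on the right-hand side of the statement.

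Fix $1 \le \beta \le d$ and $v_\beta$ as in~\eqref{Eq: Grisvard testfunction}. Since here $f = 0$, identities~\eqref{Eq: Divergence of auxiliary function} and~\eqref{Eq: Boundary condition of auxiliary function} give $\divergence(v_\beta) = \lambda u_\beta$ and $n \cdot v_\beta = \big[ \{ Du + \mu [Du]^\top \} n - \phi n \big]_\beta$, which vanishes on $Q \cap \partial \Omega$ by hypothesis, and the pointwise identity~\eqref{Eq: Cross product of auxiliary function} holds with its $f$-term deleted. Summing over $\beta$ and inserting this into Theorem~\ref{Thm: Piecewise Grisvard} on $Q \cap \Omega$ with $v = v_\beta$ produces an identity both sides of which are real. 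On $\Gamma_1 \cap \partial \Omega$ the boundary condition gives $v_\beta \cdot n = 0$, so the surface integrand there reduces to $- \II((v_\beta)_{\T} ; (v_\beta)_{\T}) \ge 0$ and is dropped exactly as in Theorem~\ref{Thm: H2 regularity on convex domains}. On $\Gamma_1 \cap \Omega \subseteq (\partial Q) \cap \Omega$ one has $\II \equiv 0$, and I would expand $\divergence_{\T}([v_\beta \cdot n]\overline{(v_\beta)_{\T}})$ by the Leibniz rule for the surface divergence on a flat hypersurface, as $\nabla_{\T}(v_\beta \cdot n) \cdot \overline{(v_\beta)_{\T}} + (v_\beta \cdot n)\, \overline{\divergence_{\T}((v_\beta)_{\T})}$, rather than by a surface integration by parts; since every component of $v_\beta$ is a $\IC$-linear combination (coefficients depending only on $d$ and $\mu$) of first derivatives of $u$ and of $\phi$, these two terms together with $- 2\Re((v_\beta)_{\T} \cdot \overline{\nabla_{\T}(v_\beta \cdot n)})$ are bounded pointwise on the flat faces by $C(d,\mu)( \lvert \nabla^2 u \rvert + \lvert \nabla \phi \rvert )( \lvert \nabla u \rvert + \lvert \phi \rvert )$, so their integral is dominated by $\Sigma$. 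For the volume terms I would use~\eqref{Eq: Cross product asymmetric term}, i.e.\ $(2\mu+\mu^2)\Re(\partial_j \partial_i u_\beta \overline{\partial_i \partial_\beta u_j}) \ge - \lvert 2\mu+\mu^2 \rvert \lvert \nabla^2 u \rvert^2$, and Young's inequality $2\mu \int_{Q \cap \Omega} \Re(\lambda u_i \overline{\partial_i \phi}) \, \d x \le \lvert \lambda \rvert^2 \int_{Q \cap \Omega} \lvert u \rvert^2 \, \d x + \mu^2 \int_{Q \cap \Omega} \lvert \nabla \phi \rvert^2 \, \d x$; since $\mu \in (-1, \sqrt 2 - 1)$ forces $1 - \lvert 2\mu+\mu^2 \rvert > 0$ and $1 - 2\mu - \mu^2 > 0$, absorbing these into the left-hand side gives $\int_{Q \cap \Omega} \lvert \nabla^2 u \rvert^2 \, \d x + \int_{Q \cap \Omega} \lvert \nabla \phi \rvert^2 \, \d x \le C(d,\mu)\,( \lvert \lambda \rvert^2 \int_{Q \cap \Omega} \lvert u \rvert^2 \, \d x + \Sigma )$.

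For the gradient term I would test the equation in divergence form $\lambda u_\alpha - \partial_j(a_{j k}^{\alpha \beta}(\mu) \partial_k u_\beta) + \partial_\alpha \phi = 0$ (valid because $\divergence u = 0$) against $\overline{u_\alpha}$ over $Q \cap \Omega$. The conormal boundary term $n_j a_{j k}^{\alpha \beta}(\mu) \partial_k u_\beta - \phi n_\alpha$ thereby produced equals $\big[ \{ Du + \mu [Du]^\top \} n - \phi n \big]_\alpha$, cf.~\eqref{Eq: Boundary condition of auxiliary function}, hence vanishes on $Q \cap \partial \Omega$; only the part of the boundary lying on $(\partial Q) \cap \Omega$ remains, and it is at most $C(d,\mu) \int_{(\partial Q) \cap \Omega} (\lvert \nabla u \rvert + \lvert \phi \rvert)\lvert u \rvert \, \d \sigma$. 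On the left-hand side, $\lambda \int_{Q \cap \Omega} \lvert u \rvert^2 \, \d x$ lies in $\overline{\S_\theta}$ because $\int_{Q \cap \Omega} \lvert u \rvert^2 \, \d x \ge 0$, while $\int_{Q \cap \Omega} a_{j k}^{\alpha \beta}(\mu) \partial_k u_\beta \overline{\partial_j u_\alpha} \, \d x$ is real and, by the ellipticity~\eqref{Eq: Ellipticity Neumann}, at least $\kappa_\mu \int_{Q \cap \Omega} \lvert \nabla u \rvert^2 \, \d x \ge 0$; so~\eqref{Eq: Inverse triangle inequality} applies and yields $\int_{Q \cap \Omega} \lvert \nabla u \rvert^2 \, \d x \le C(d,\mu,\theta) \int_{(\partial Q) \cap \Omega} (\lvert \nabla u \rvert + \lvert \phi \rvert)\lvert u \rvert \, \d \sigma$.

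Multiplying this last inequality by $\lvert \lambda \rvert$ and adding it to the bound for $\nabla^2 u$ and $\nabla \phi$, one obtains the left-hand side of the statement bounded by $C(d,\mu,\theta)$ times $\lvert \lambda \rvert^2 \int_{Q \cap \Omega} \lvert u \rvert^2 \, \d x + \Sigma + \lvert \lambda \rvert \int_{(\partial Q) \cap \Omega} (\lvert \nabla u \rvert + \lvert \phi \rvert)\lvert u \rvert \, \d \sigma$. The last summand is then absorbed into $\Sigma$: on $(\partial Q) \cap \Omega$, which lies in the interior of $\Omega$ and on which $u$ and $\phi$ solve the equation by continuity up to the boundary, one has the pointwise bound $\lvert \lambda \rvert \lvert u \rvert = \lvert \Delta u - \nabla \phi \rvert \le \sqrt d\, \lvert \nabla^2 u \rvert + \lvert \nabla \phi \rvert$, whence $\lvert \lambda \rvert \int_{(\partial Q) \cap \Omega} (\lvert \nabla u \rvert + \lvert \phi \rvert)\lvert u \rvert \, \d \sigma \le \sqrt d\, \Sigma$. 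This gives the claimed inequality with a constant depending only on $d$, $\mu$, $\theta$. The main difficulty is the surface-divergence term $\divergence_{\T}([v_\beta \cdot n]\overline{(v_\beta)_{\T}})$ on the faces of $Q$: integrating it by parts along the boundary would generate uncontrollable $(d-2)$-dimensional contributions on the edges $\Gamma_0$ and on $(\partial Q) \cap \partial \Omega$, and it is precisely the flatness of the cube faces — which simultaneously annihilates the second fundamental form there — that makes the pointwise Leibniz expansion, and hence the clean surface bound, admissible.
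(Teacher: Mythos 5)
Your argument is correct and runs on the same engine as the paper's proof: Lemma~\ref{Lem: Piecewise regular} to make $Q\cap\Omega$ piecewise $\C^2$, Theorem~\ref{Thm: Piecewise Grisvard} applied to the fields $v_\beta$ from~\eqref{Eq: Grisvard testfunction}, flatness of the cube faces to annihilate the second fundamental form there and to bound the remaining tangential terms pointwise by $\lvert \nabla v_\beta \rvert \lvert v_\beta \rvert \leq C(d,\mu)\,(\lvert \nabla^2 u \rvert + \lvert \nabla \phi \rvert)(\lvert \nabla u \rvert + \lvert \phi \rvert)$, and convexity plus $n\cdot v_\beta=0$ on $Q\cap\partial\Omega$. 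The one genuine deviation is how you produce $\lvert\lambda\rvert\int_{Q\cap\Omega}\lvert\nabla u\rvert^2\,\d x$. The paper extracts it from inside the Grisvard identity, exactly as in~\eqref{Eq: Divergence in Grisvard formula}: it writes $\int\lvert\divergence(v_\beta)\rvert^2\,\d x=\overline{\lambda}\int\divergence(v_\beta)\,\overline{u_\beta}\,\d x$, integrates by parts (which in the localized setting leaves an extra boundary term $\overline{\lambda}\int_{(\partial Q)\cap\Omega}(n\cdot v_\beta)\overline{u_\beta}\,\d\sigma$, controlled by the same pointwise bound $\lvert\lambda\rvert\lvert u\rvert\leq\sqrt{d}\,\lvert\nabla^2u\rvert+\lvert\nabla\phi\rvert$ you use at the end), and then applies the sectoriality inequality~\eqref{Eq: Inverse triangle inequality} to the resulting complex identity of the form $z+\alpha$. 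You instead leave $\int\lvert\divergence(v_\beta)\rvert^2=\lvert\lambda\rvert^2\int\lvert u\rvert^2$ untouched, so your Grisvard identity is real and entirely $\theta$-free, and you recover the gradient term from a separate energy identity obtained by testing the divergence-form equation against $\overline{u}$, with $\theta$ entering only there. Both routes are sound and yield the same constant dependence; yours costs one extra (elementary) identity but cleanly isolates where sectoriality is used, while the paper's needs no second computation. Your choice to expand $\divergence_{\T}([v_\beta\cdot n]\overline{(v_\beta)_{\T}})$ on the flat faces by the Leibniz rule pointwise, rather than integrating by parts along the boundary, is exactly what the paper's bound $\int_{(\partial Q)\cap\Omega}\lvert\nabla v_\beta\rvert\lvert v_\beta\rvert\,\d\sigma$ presupposes.
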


\begin{proof}
By Lemma~\ref{Lem: Piecewise regular}, $Q \cap \Omega$ is piecewise $\C^2$-regular with corresponding set $\Gamma_1$ satisfying $\Gamma_1 \cap \Omega \subset (\partial Q) \cap \Omega$. Thus, we are in the situation to apply Theorem~\ref{Thm: Piecewise Grisvard} on the underlying domain $Q \cap \Omega$ and $v := v_{\beta}$ defined by~\eqref{Eq: Grisvard testfunction}. The same calculation as in the first part of the proof of Theorem~\ref{Thm: H2 regularity on convex domains} (but with an application of Theorem~\ref{Thm: Piecewise Grisvard} instead of Theorem~\ref{Thm: Grisvard}) yields the existence of a constant $C > 0$ depending only on $d$, $\mu$, and $\theta$ such that
\begin{align*}
 \lvert \lambda \rvert \int_{Q \cap \Omega} \lvert \nabla u \rvert^2 \; \d x &+ \int_{Q \cap \Omega} \lvert \nabla^2 u \rvert^2 \; \d x + \int_{Q \cap \Omega} \lvert \nabla \phi \rvert^2 \; \d x \\
 &\qquad\leq C \bigg(\lvert \lambda \rvert^2 \int_{Q \cap \Omega} \lvert u \rvert^2 \; \d x + \int_{(\partial Q) \cap \Omega} \lvert \nabla v_{\beta} \rvert \lvert v_{\beta} \rvert \; \d \sigma \bigg).
\end{align*}
By definition of $v_{\beta}$ this readily concludes the proof.
\end{proof}

In the previous proposition we saw that a local $\H^2$-estimate can be achieved with the drawback that highest-order terms appear in boundary integrals on the right-hand side of the inequality. The following lemma (the so-called $\eps$-lemma) will help us to absorb these terms to the left-hand side and can be found in~\cite[Lem.~0.5]{Giaquinta_Modica}. Notice that the notation of cubes $Q(x_0 , r)$ used here differs from the one used in~\cite{Giaquinta_Modica}, so that our formulation is slightly different.

\begin{lemma}
\label{Lem: Eps-lemma}
Let $f$, $g$, and $h$ be non-negative functions in $\L^1 (\cQ)$, where $\cQ$ is a cube in $\IR^d$ and let $\alpha > 0$. There exists $\eps_0 > 0$, depending only on $d$ and $\alpha$, such that if for some $0 \leq \eps \leq \eps_0$ and some $C_1 = C_1 (\eps) > 0$ the estimate
\begin{align*}
\int_{Q(x_0 , r)} f \; \d x \leq C_1 \bigg\{ \frac{1}{r^{\alpha}} \int_{Q(x_0 , 2 r)} g \; \d x + \int_{Q (x_0 , 2 r)} h \; \d x \bigg\} + \eps \int_{Q (x_0 , 2 r)} f \; \d x
\end{align*}
holds for all $x_0 \in \cQ$ and $0 < r < \sqrt{d} \dist(x_0 , \partial \cQ)$, then there exists a constant $C > 0$, depending only on $d$, $\alpha$, and $C_1$, such that
\begin{align*}
 \int_{Q (x_0 , r)} f \; \d x \leq C \bigg\{ \frac{1}{r^{\alpha}} \int_{Q (x_0 , 2 r)} g \; \d x + \int_{Q (x_0 , 2 r)} h \; \d x \bigg\}.
\end{align*}
\end{lemma}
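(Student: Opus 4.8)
The plan is to run the standard ``hole-filling'' absorption iteration. The one subtlety is that in the hypothesis the error term $\eps\int_{Q(x_0,2r)}f\,\d x$ lives on a \emph{strictly larger} cube than the quantity $\int_{Q(x_0,r)}f\,\d x$ that it controls, so a direct iteration over concentric cubes does not close. To remedy this I would first \emph{localise} the hypothesis: cover a cube by many small cubes whose doubles have bounded overlap, apply the hypothesis on each, and add up; this replaces the coefficient $\eps$ by $\eps N_d$ with $N_d$ depending only on $d$, at the cost of enlarging the right-hand side by a dimensional factor, after which an iteration can be carried out. So, fix $x_0\in\cQ$ and $r>0$ with $r<\sqrt d\,\dist(x_0,\partial\cQ)$ (so that $Q(x_0,2r)$ is compactly contained in $\cQ$), and set $\Phi(t):=\int_{Q(x_0,t)}f\,\d x$ for $t\in[r,2r]$; since $f\in\L^1(\cQ)$, the function $\Phi$ is non-decreasing and bounded by $\Phi(2r)<\infty$. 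Write $A:=\int_{Q(x_0,2r)}g\,\d x$ and $B:=\int_{Q(x_0,2r)}h\,\d x$.

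For radii $r\le t<s\le 2r$ put $\tilde r:=(s-t)/3$ and let $\{Q(y_i,\tilde r)\}_i$ be the cubes of the grid of diameter $\tilde r$ that meet $Q(x_0,t)$. A short computation shows: these cubes cover $Q(x_0,t)$ (and overlap only on a set of measure zero); if $Q(y_i,\tilde r)$ meets $Q(x_0,t)$ then $Q(y_i,2\tilde r)\subset Q(x_0,t+3\tilde r)=Q(x_0,s)\subset\cQ$, so the hypothesis applies on each $Q(y_i,\tilde r)$; and, the dilation factor being $2$, the family $\{Q(y_i,2\tilde r)\}_i$ has overlap bounded by a constant $N_d\le 3^d$ depending only on $d$. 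Summing the hypothesis over $i$, using the covering property on the left and the bounded overlap on the right, and inserting $\tilde r=(s-t)/3$ together with the monotonicity of the integrals of $g$, $h$, $f$, I obtain
\[
 \Phi(t)\ \le\ \frac{3^{\alpha}C_1 N_d}{(s-t)^{\alpha}}\,A\ +\ C_1 N_d\,B\ +\ \eps N_d\,\Phi(s),\qquad r\le t<s\le 2r .
\]

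Now choose $\eps_0:=1/(2N_d)$, which depends only on $d$. For $0\le\eps\le\eps_0$ the coefficient $\theta:=\eps N_d$ satisfies $\theta\le 1/2<1$, so the last display has the form $\Phi(t)\le\theta\,\Phi(s)+\widetilde A\,(s-t)^{-\alpha}+\widetilde B$ on $[r,2r]$ with $\widetilde A:=3^{\alpha}C_1 N_d A$ and $\widetilde B:=C_1 N_d B$. I would then invoke the elementary iteration lemma: if $\Phi\colon[\rho,R]\to[0,\infty)$ is bounded and satisfies $\Phi(t)\le\theta\,\Phi(s)+\widetilde A(s-t)^{-\alpha}+\widetilde B$ for all $\rho\le t<s\le R$ with $\theta\in[0,1)$, then $\Phi(\rho)\le c\big(\widetilde A(R-\rho)^{-\alpha}+\widetilde B\big)$, where $c$ depends only on $\alpha$ and on an upper bound for $\theta$ (here $1/2$); one proves this by evaluating the inequality along the radii $t_i=\rho+(1-\tau^i)(R-\rho)$ for a fixed $\tau\in(0,1)$ with $\theta\tau^{-\alpha}<1$, iterating, summing the two resulting geometric series, and letting $i\to\infty$, the boundedness of $\Phi$ killing the remainder term $\theta^i\Phi(t_i)$. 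Applying this with $\rho=r$ and $R=2r$ gives
\[
 \int_{Q(x_0,r)}f\,\d x\ \le\ C\Big(\frac{1}{r^{\alpha}}\int_{Q(x_0,2r)}g\,\d x+\int_{Q(x_0,2r)}h\,\d x\Big),\qquad C=C(d,\alpha,C_1),
\]
which is exactly the assertion.

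The only genuinely non-routine ingredient is the localisation step: one must build a covering of $Q(x_0,t)$ by cubes whose doubles \emph{simultaneously} stay inside $Q(x_0,s)$ and have overlap bounded by a purely dimensional constant. It is precisely this bounded-overlap property that keeps the coefficient $\eps N_d$ strictly below $1$ after summation, thereby reducing the statement to the classical iteration lemma; everything else is bookkeeping of the constants and their dependencies.
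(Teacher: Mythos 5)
Your proof is correct, and it is essentially the standard argument behind the cited result: the paper itself gives no proof but refers to Giaquinta--Modica \cite[Lem.~0.5]{Giaquinta_Modica}, whose proof is exactly this two-step scheme of (i) a grid covering with $3^d$-bounded overlap of the doubled cubes to convert the hypothesis into an inequality $\Phi(t)\le\theta\,\Phi(s)+\widetilde A(s-t)^{-\alpha}+\widetilde B$ on nested cubes with $\theta=\eps N_d<1$, followed by (ii) the classical absorption/iteration lemma along the radii $t_i=\rho+(1-\tau^i)(R-\rho)$. All the geometric checks you need (that $Q(y_i,2\tilde r)\subset Q(x_0,s)\subset\cQ$ and that each $(y_i,\tilde r)$ is an admissible center--radius pair for the hypothesis, which follows from $\dist(y_i,\partial\cQ)>\tilde r/\sqrt d$) do go through, so the argument is complete.
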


The following proposition finally provides us with a local higher-order estimate.

\begin{proposition}
\label{Prop: Reverse regularity estimates}
Let $\Omega \subset \IR^d$, $d \geq 2$, be a bounded, convex, and smooth domain, $\theta \in (0 , \pi)$, and $\mu \in (-1 , \sqrt{2} - 1)$ and let $\cQ$ be a cube with $\cQ \cap \Omega \neq \emptyset$ and diameter $R > 0$. Then there exists $C > 0$ depending only on $d$, $\mu$, and $\theta$ such that smooth functions (smooth up to the boundary) $u : (2 \cQ) \cap \Omega \to \IC^d$ and $\phi : (2 \cQ) \cap \Omega \to \IC$ solving $\lambda u - \Delta u + \nabla \phi = 0$ and $\divergence(u) = 0$ in $(2 \cQ) \cap \Omega$ and which satisfy $\{ D u + \mu [D u]^{\top} \} n - \phi n = 0$ on $(2 \cQ) \cap \partial \Omega$ satisfy
\begin{align*}
 &\lvert \lambda \rvert \int_{\cQ \cap \Omega} \lvert \nabla u \rvert^2 \; \d x + \int_{\cQ \cap \Omega} \lvert \nabla^2 u \rvert^2 \; \d x + \int_{\cQ \cap \Omega} \lvert \nabla \phi \rvert^2 \; \d x \\
 &\qquad \leq C \bigg(\lvert \lambda \rvert^2 \int_{(2 \cQ) \cap \Omega} \lvert u \rvert^2 \; \d x + \frac{1}{R^2} \int_{(2 \cQ) \cap \Omega} \big( \lvert \nabla u \rvert^2 + \lvert \phi \rvert^2 \big) \; \d x \bigg).
\end{align*}
\end{proposition}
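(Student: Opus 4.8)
The plan is to run a Caccioppoli-type argument: apply the local $\H^2$-estimate of Lemma~\ref{Lem: Localized regularity estiamtes} on a one-parameter family of concentric cubes, average over the cube radius so as to turn the highest-order boundary integrals into interior volume integrals, absorb the resulting top-order contributions by Young's inequality, and close the estimate by invoking the $\eps$-lemma (Lemma~\ref{Lem: Eps-lemma}) with exponent $\alpha = 2$. It is convenient to write $F := \lvert \lambda \rvert \lvert \nabla u \rvert^2 + \lvert \nabla^2 u \rvert^2 + \lvert \nabla \phi \rvert^2$ and $G := \lvert \nabla u \rvert^2 + \lvert \phi \rvert^2$, both regarded as functions on $(2 \cQ) \cap \Omega$ (finite there by the smoothness hypothesis) and extended by zero to $2 \cQ$, and to denote by $C$ a constant depending only on $d$, $\mu$, $\theta$ that may change from line to line.

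First I would fix $x_0 \in 2 \cQ$ and $0 < r < \sqrt{d}\, \dist(x_0 , \partial (2 \cQ))$, which guarantees $Q(x_0 , 2 r) \subset 2 \cQ$. For every $s \in (r , 2 r)$ the cube $Q(x_0 , s)$ lies in $2 \cQ$, so $u$ and $\phi$ solve $\lambda u - \Delta u + \nabla \phi = 0$ and $\divergence(u) = 0$ in $Q(x_0 , s) \cap \Omega$ and satisfy the Neumann-type boundary condition on $Q(x_0 , s) \cap \partial \Omega$; hence Lemma~\ref{Lem: Localized regularity estiamtes} gives
\begin{align*}
 \int_{Q(x_0 , s) \cap \Omega} F \; \d x \leq C \bigg( \lvert \lambda \rvert^2 \int_{Q(x_0 , s) \cap \Omega} \lvert u \rvert^2 \; \d x + \int_{(\partial Q(x_0 , s)) \cap \Omega} \big( \lvert \nabla^2 u \rvert + \lvert \nabla \phi \rvert \big) \big( \lvert \nabla u \rvert + \lvert \phi \rvert \big) \; \d \sigma \bigg).
\end{align*}
I would then integrate this inequality over $s \in (r , 2 r)$ and divide by $r$. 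On the left, monotonicity of the cubes in $s$ makes the average dominate $\int_{Q(x_0 , r) \cap \Omega} F \, \d x$. On the right, the volume term is bounded by $\lvert \lambda \rvert^2 \int_{Q(x_0 , 2 r) \cap \Omega} \lvert u \rvert^2 \, \d x$, while the surface integrals are transformed, using polar-type coordinates adapted to cubes (the radial variable being the sup-norm distance to $x_0$) together with Fubini's theorem, into $\tfrac{C}{r} \int_{Q(x_0 , 2 r) \cap \Omega} (\lvert \nabla^2 u \rvert + \lvert \nabla \phi \rvert)(\lvert \nabla u \rvert + \lvert \phi \rvert) \, \d x$, since the associated change-of-variables factor is bounded above and below by constants depending only on $d$.

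Next I would apply Young's inequality to the remaining products with the radius-dependent weight $\eta := \eps r / C$; using $\lvert \nabla^2 u \rvert^2 + \lvert \nabla \phi \rvert^2 \leq F$ this turns the estimate into
\begin{align*}
 \int_{Q(x_0 , r) \cap \Omega} F \; \d x \leq C_{\eps} \bigg( \frac{1}{r^2} \int_{Q(x_0 , 2 r) \cap \Omega} G \; \d x + \lvert \lambda \rvert^2 \int_{Q(x_0 , 2 r) \cap \Omega} \lvert u \rvert^2 \; \d x \bigg) + \eps \int_{Q(x_0 , 2 r) \cap \Omega} F \; \d x,
\end{align*}
valid for all $x_0 \in 2 \cQ$ and $0 < r < \sqrt{d}\, \dist(x_0 , \partial(2 \cQ))$. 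This is exactly the hypothesis of Lemma~\ref{Lem: Eps-lemma} with $\alpha = 2$, the cube $2 \cQ$ in place of $\cQ$, and $f = F$, $g = G$, $h = \lvert \lambda \rvert^2 \lvert u \rvert^2$. Fixing $\eps := \eps_0(d , 2)$ and invoking the lemma removes the last term and yields the same estimate without the $\eps$-term and with $C_{\eps}$ replaced by a constant $C = C(d , \mu , \theta)$. Finally, taking $x_0$ to be the common center of $\cQ$ and $2 \cQ$, for which $\sqrt{d}\, \dist(x_0 , \partial(2 \cQ)) = R$, and letting $r \uparrow R$, monotone convergence in the three integrals ($Q(x_0 , r) \uparrow \cQ$ and $Q(x_0 , 2 r) \uparrow 2 \cQ$ up to null sets) produces precisely the asserted inequality.

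The step I expect to be the main obstacle is the coarea/Fubini passage in the second paragraph: replacing the surface integrals over the ``cube spheres'' $\partial Q(x_0 , s)$ by a volume integral requires setting up polar-type coordinates built on the sup-norm and checking that the resulting Jacobian is bounded two-sidedly by a purely dimensional constant, so that no geometric quantity of $\Omega$ enters the final constant. The remaining ingredients — monotonicity of the concentric cubes, the $\eta$-weighted Young inequality, verifying that the outcome has precisely the form required by Lemma~\ref{Lem: Eps-lemma}, and the limit $r \uparrow R$ — are routine.
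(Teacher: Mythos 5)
Your proposal is correct and follows essentially the same route as the paper: apply Lemma~\ref{Lem: Localized regularity estiamtes} on a family of dilated cubes, use Young's inequality with an $r$-dependent weight and the coarea formula (with the sup-norm radial variable) to convert the top-order boundary integrals into volume integrals with a factor $\eps$, and close via Lemma~\ref{Lem: Eps-lemma} with $\alpha = 2$. The only (harmless) cosmetic difference is that you run the $\eps$-lemma on $2\cQ$ and make the final limiting step $r \uparrow R$ explicit, whereas the paper parametrizes the dilations as $sQ$, $1 < s < 2$, and leaves that last step implicit.
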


\begin{proof}
Fix a cube $\cQ \subset \IR^d$ with $\cQ \cap \Omega \neq \emptyset$. Let $Q := Q(x_0 , r) \subset \IR^d$ be a cube with center $x_0 \in \cQ$ and $\diam(Q) = r$ that satisfies $0 < r < \sqrt{d} \dist(x_0 , \partial \cQ)$. Let $1 < s < 2$. By Lemma~\ref{Lem: Localized regularity estiamtes} one finds
\begin{align*}
 &\lvert \lambda \rvert \int_{Q \cap \Omega} \lvert \nabla u \rvert^2 \; \d x + \int_{Q \cap \Omega} \lvert \nabla^2 u \rvert^2 \; \d x + \int_{Q \cap \Omega} \lvert \nabla \phi \rvert^2 \; \d x \\
 &\leq \lvert \lambda \rvert \int_{(s Q) \cap \Omega} \lvert \nabla u \rvert^2 \; \d x + \int_{(s Q) \cap \Omega} \lvert \nabla^2 u \rvert^2 \; \d x + \int_{s Q \cap \Omega} \lvert \nabla \phi \rvert^2 \; \d x \\
 &\leq C \bigg(\lvert \lambda \rvert^2 \int_{(2 Q) \cap \Omega} \lvert u \rvert^2 \; \d x + \int_{(\partial s Q) \cap \Omega} \big( \lvert \nabla^2 u \rvert \lvert \nabla u \rvert + \lvert \nabla^2 u \rvert \lvert \phi \rvert + \lvert \nabla \phi \rvert \lvert \nabla u \rvert + \lvert \nabla \phi \rvert \lvert \phi \rvert  \big) \; \d \sigma \bigg),
\end{align*}
where the constant $C > 0$ depends only on $d$, $\mu$, and $\theta$. An application of Young's inequality (this produces the factors $r \eps$ and $(r \eps)^{-1}$ for some $\eps > 0$) followed by an integration over $s$ yields
\begin{align*}
 &\lvert \lambda \rvert \int_{Q \cap \Omega} \lvert \nabla u \rvert^2 \; \d x + \int_{Q \cap \Omega} \lvert \nabla^2 u \rvert^2 \; \d x + \int_{Q \cap \Omega} \lvert \nabla \phi \rvert^2 \; \d x \\
 &\qquad\leq C \lvert \lambda \rvert^2 \int_{(2 Q) \cap \Omega} \lvert u \rvert^2 \; \d x + \frac{C}{r \eps} \int_1^2 \int_{(\partial s Q) \cap \Omega} \big(\lvert \nabla u \rvert^2 + \lvert \phi \rvert^2\big) \; \d \sigma \; \d s \\
 &\qquad\qquad + r \eps \int_1^2 \int_{(\partial s Q) \cap \Omega} \big( \lvert \nabla^2 u \rvert^2 + \lvert \nabla \phi \rvert^2 \big) \; \d \sigma \; \d s.
\end{align*}
Now, notice that the co-area formula, see~\cite[Thm.~3.2.12]{Federer}, implies that
\begin{align*}
 \int_1^2 \int_{\partial s Q} \fg \; \d \sigma \; \d s \leq \frac{C_{\text{co-area}}}{r} \int_{2 Q} \fg \; \d x
\end{align*}
for all representatives $\fg$ of a function $g \in \L^1 (\IR^d)$, where $C_{\text{co-area}} > 0$ is an absolute constant. Choosing $g$ in the first integral as $E_0 (\lvert \nabla u \rvert^2 + \lvert \phi \rvert^2)$ and in the second integral as $E_0 (\lvert \nabla^2 u \rvert^2 + \lvert \nabla \phi \rvert^2)$, where $E_0$ extends functions outside of $2 Q \cap \Omega$ by zero delivers
\begin{align*}
 &\lvert \lambda \rvert \int_{Q \cap \Omega} \lvert \nabla u \rvert^2 \; \d x + \int_{Q \cap \Omega} \lvert \nabla^2 u \rvert^2 \; \d x + \int_{Q \cap \Omega} \lvert \nabla \phi \rvert^2 \; \d x \\
 &\qquad\leq C \lvert \lambda \rvert^2 \int_{(2 Q) \cap \Omega} \lvert u \rvert^2 \; \d x + \frac{C C_{\text{co-area}}}{r^2 \eps} \int_{(2 Q) \cap \Omega} \big(\lvert \nabla u \rvert^2 + \lvert \phi \rvert^2\big) \; \d x \\
 &\qquad\qquad + \eps C_{\text{co-area}} \int_{(2 Q) \cap \Omega} \big( \lvert \nabla^2 u \rvert^2 + \lvert \nabla \phi \rvert^2 \big) \; \d x.
\end{align*}
The proof is concluded for $\eps$ small enough by an application of Lemma~\ref{Lem: Eps-lemma}.
\end{proof}

\section{An $\L^p$-extrapolation theorem suitable for subspaces of $\L^p$}
\label{An Lp-extrapolation theorem suitable for subspaces of Lp}

\noindent In classical Calder\'on--Zygmund theory, operators $T$ associated to an integral kernel $K ( \cdot , \cdot )$ give rise to an $\L^p$-bounded operator for all $1 < p < \infty$ if $T$ is bounded on $\L^2$ and if the kernel $K$ is a so-called standard kernel. The standard kernel property is some kind of cancellation property of $K$, see~\cite[Def.~5.11]{Duoandikoetxea}. If the operator $T$ is not associated to a kernel or if one is only interested in whether $T$ is bounded on $\L^p$ but for $p$ being merely in an interval $I \subset (1 , \infty)$, then one can replace the property that $T$ is associated to a standard kernel by weaker cancellation properties. \par
In this context, there are for example the $\L^p$-extrapolation theorems of Shen~\cite{Shen-Riesz_transform} (if one is interested to conclude the $\L^p$-boundedness on an interval $(2 , q)$ with $q > 2$) or of Blunck and Kunstmann~\cite{Blunck_Kunstmann} (if one is interested to conclude the $\L^p$-boundedness on an interval $(q , 2)$ with $q < 2$). In the following, we will consider Shen's theorem more closely and begin with a formulation of his theorem which can be found in~\cite{Tolksdorf_higher_order, Tolksdorf_Dissertation}.

\begin{theorem}
\label{Thm: Shen}
Let $\Omega \subset \IR^d$ be Lebesgue-measurable, $\cM > 0$, and let $T \in \Lop(\L^2(\Omega ; \IC^m) , \L^2(\Omega ; \IC^n))$ with $\| T \|_{\Lop(\L^2(\Omega ; \IC^m) , \L^2(\Omega ; \IC^n))} \leq \cM$. \par
Suppose that there exist constants $q > 2$, $R_0 > 0$, $\alpha_2 > \alpha_1 > 1$, and $\cC > 0$, where $R_0 = \infty$ if $\diam(\Omega) = \infty$, such that the following holds. Namely, for all $B = B(x_0 , r)$ with $0 < r < R_0$, whose center $x_0$ is either such that $x_0 \in \partial \Omega$ or $\alpha_2 B \subset \Omega$, and all compactly supported $f \in \L^{\infty}(\Omega ; \IC^m)$ with $f = 0$ on $\Omega \cap \alpha_2 B$ the estimate
\begin{align}
\label{Eq: Weak reverse Hoelder inequality}
 \begin{aligned}
 \bigg( \frac{1}{r^d} \int_{\Omega \cap B} \lvert T f \rvert^q \; \d x \bigg)^{\frac{1}{q}} &\leq \cC \bigg\{ \bigg( \frac{1}{r^d} \int_{\Omega \cap \alpha_1 B} \lvert T f \rvert^2 \; \d x \bigg)^{\frac{1}{2}} + \sup_{B^{\prime} \supset B} \bigg( \frac{1}{\lvert B^{\prime} \rvert} \int_{\Omega \cap B^{\prime}} \lvert f \rvert^2 \; \d x \bigg)^{\frac{1}{2}} \bigg\}
 \end{aligned}
\end{align}
holds. Here the supremum runs over all balls $B^{\prime}$ containing $B$. \par
Then for each $2 < p < q$ the restriction of $T$ onto $\L^2(\Omega ; \IC^m) \cap \L^p(\Omega ; \IC^m)$ extends to a bounded linear operator from $\L^p (\Omega ; \IC^m)$ into $\L^p(\Omega ; \IC^n)$, with operator norm bounded by a constant depending on $d$, $p$, $q$, $\alpha_1$, $\alpha_2$, $\cC$, and $\cM$, and additionally on $R_0$ and $\diam(\Omega)$ if $\Omega$ is bounded.
\end{theorem}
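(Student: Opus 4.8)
The plan is to follow Shen's approach from~\cite{Shen-Riesz_transform}: reduce the statement to a purely real-variable self-improving lemma applied to the single function $F := Tf$, the hypothesis~\eqref{Eq: Weak reverse Hoelder inequality} being used only to verify that lemma's assumptions on each ball. Fix $2 < p < q$. By density it suffices to prove $\| Tf \|_{\L^p(\Omega)} \leq C \| f \|_{\L^p(\Omega)}$ (with the stated dependence) for $f \in \L^{\infty}(\Omega ; \IC^m)$ with compact support, since such $f$ are dense in $\L^2(\Omega;\IC^m) \cap \L^p(\Omega;\IC^m)$ and $T$ is continuous on $\L^2$. Extend $f$ and $Tf$ by zero outside $\Omega$. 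If $\diam(\Omega) < \infty$ fix a cube $\cQ_0$ with $\overline{\Omega} \subset \cQ_0$ and side comparable to $\diam(\Omega)$ and argue on the dyadic subcubes of $\cQ_0$; if $\diam(\Omega) = \infty$ then $R_0 = \infty$ and one argues on dyadic cubes of $\IR^d$ of all sizes. In both cases, after enlarging the dilation constants, the balls $B,\alpha_1 B,\alpha_2 B$ of~\eqref{Eq: Weak reverse Hoelder inequality} are comparable to a dyadic cube $Q$ and its dilates $2Q$ and $\kappa Q$, where $\kappa$ is fixed so that $\kappa Q \supset \alpha_2 B$, and we pass freely between balls and cubes.

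For each admissible dyadic cube $Q$ --- admissible meaning that the associated ball $B$ is centered on $\partial\Omega$ or satisfies $\alpha_2 B \subset \Omega$, which is exactly the dichotomy permitted in~\eqref{Eq: Weak reverse Hoelder inequality} --- decompose $f = f\mathbf{1}_{\kappa Q} + f\mathbf{1}_{\Omega \setminus \kappa Q} =: g_Q + h_Q$, so that $F = Tg_Q + Th_Q$ and $\lvert F \rvert \leq \lvert F_Q \rvert + \lvert R_Q \rvert$ with $F_Q := Tg_Q$, $R_Q := Th_Q$. The $\L^2$-boundedness $\| T \| \leq \cM$ gives
\begin{align*}
 \Big( \frac{1}{\lvert B \rvert} \int_{\Omega \cap B} \lvert F_Q \rvert^2 \; \d x \Big)^{1/2} \leq \frac{C \cM}{\lvert B \rvert^{1/2}} \Big( \int_{\kappa Q \cap \Omega} \lvert f \rvert^2 \; \d x \Big)^{1/2} \leq C \cM \sup_{Q' \supset Q} \Big( \frac{1}{\lvert Q' \rvert} \int_{Q' \cap \Omega} \lvert f \rvert^2 \; \d x \Big)^{1/2},
\end{align*}
the last step because $\lvert \kappa Q \rvert$ is comparable to $\lvert B \rvert$ and any cube $Q' \supset \kappa Q$ is a competitor in the supremum. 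Since $h_Q$ vanishes on $\Omega \cap \alpha_2 B$, hypothesis~\eqref{Eq: Weak reverse Hoelder inequality} applied to $h_Q$, together with $\lvert h_Q \rvert \leq \lvert f \rvert$, $R_Q = F - F_Q$, and the previous display (used to absorb the $F_Q$-contribution over $\alpha_1 B$), yields
\begin{align*}
 \Big( \frac{1}{\lvert B \rvert} \int_{\Omega \cap B} \lvert R_Q \rvert^q \; \d x \Big)^{1/q} \leq C \Big( \frac{1}{\lvert B \rvert} \int_{\Omega \cap \alpha_1 B} \lvert F \rvert^2 \; \d x \Big)^{1/2} + C \cM \sup_{Q' \supset Q} \Big( \frac{1}{\lvert Q' \rvert} \int_{Q' \cap \Omega} \lvert f \rvert^2 \; \d x \Big)^{1/2}.
\end{align*}
No property of $\Omega$ beyond Lebesgue-measurability is used here.

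These two estimates are precisely the hypotheses of the real-variable lemma underlying Shen's argument (a dyadic Calder\'on--Zygmund stopping-time result, cf.\@~\cite{Shen-Riesz_transform}): if $F \in \L^2$, $f \in \L^p$, $2 < p < q$, and every admissible dyadic cube $Q$ admits a splitting $\lvert F \rvert \leq \lvert F_Q \rvert + \lvert R_Q \rvert$ on $2Q$ obeying the displayed $\L^2$-bound on $F_Q$ and $\L^q$-bound on $R_Q$, then $\| F \|_{\L^p} \leq C\{ \| f \|_{\L^p} + \lvert \cQ_0 \rvert^{1/p - 1/2}\| F \|_{\L^2} \}$ (respectively $\| F \|_{\L^p(\IR^d)} \leq C \| f \|_{\L^p(\IR^d)}$ when all dyadic scales are available). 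Its proof decomposes the superlevel set $\{ \M(\lvert F \rvert^2) > \lambda \}$ ($\M$ the Hardy--Littlewood maximal operator) into maximal dyadic cubes, applies the two estimates on each selected cube, and deduces a good-$\lambda$ inequality $\lvert \{ \M(\lvert F \rvert^2) > N^2 \lambda,\ \M(\lvert f \rvert^2) \leq \eps^2 \lambda \} \rvert \leq C(N^{-q} + \eps^{2})\lvert \{ \M(\lvert F \rvert^2) > \lambda \}\rvert$; multiplying by $\lambda^{p/2-1}$, integrating, choosing $N$ large and $\eps$ small to absorb, and using that $\M$ is bounded on $\L^{p/2}$ (here $p/2 > 1$ is essential) proves it. Applying the lemma with $F = Tf$, $F_Q = Tg_Q$, $R_Q = Th_Q$ gives $\| Tf \|_{\L^p(\Omega)} \leq C\{ \| f \|_{\L^p(\Omega)} + \cM \diam(\Omega)^{d(1/2 - 1/p)} \| f \|_{\L^2(\Omega)} \}$ on bounded $\Omega$, where a further use of H\"older's inequality bounds $\| f \|_{\L^2(\Omega)} \leq \diam(\Omega)^{d(1/2 - 1/p)} \| f \|_{\L^p(\Omega)}$, while $\| Tf \|_{\L^p(\Omega)} \leq C\cM\| f \|_{\L^p(\Omega)}$ when $\diam(\Omega) = \infty$; a density argument removes the temporary hypothesis on $f$, and tracking constants yields the asserted dependence on $d, p, q, \alpha_1, \alpha_2, \cC, \cM$, and (for bounded $\Omega$) on $\diam(\Omega)$.

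The main obstacle is the real-variable lemma itself, i.e.\@ making the stopping-time/good-$\lambda$ machinery run cleanly. Two points demand care: first, the quantity $\sup_{Q' \supset Q}(\ldots)^{1/2}$ in~\eqref{Eq: Weak reverse Hoelder inequality} is a dyadic maximal function over cubes \emph{containing} $Q$, and one must verify it is dominated, uniformly in $Q$, by the Hardy--Littlewood maximal function of $\lvert f \rvert^2$ and behaves correctly under the stopping-time selection; second, one must reduce from the three balls $B \subset \alpha_1 B \subset \alpha_2 B$ to dyadic cubes $Q \subset 2Q \subset \kappa Q$ while preserving comparability of averages and the boundary-versus-interior admissibility dichotomy, which forces a careful choice of the Whitney/dyadic scales and of $\kappa$ relative to $\alpha_1$ and $\alpha_2$. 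The case $\diam(\Omega) = \infty$ with $R_0 = \infty$ adds only the bookkeeping of running the argument on all dyadic scales (or exhausting $\IR^d$ by large cubes and passing to the limit) with constants independent of scale.
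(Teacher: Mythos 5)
Your proposal is correct and follows essentially the same route as the paper (which quotes this theorem from Shen's work and the author's dissertation and only sketches the argument): extension by zero to $\IR^d$, the decomposition $f = f\ind_{\kappa Q} + f\ind_{\Omega\setminus\kappa Q}$ with the $\L^2$-bound handling the near part and the weak reverse H\"older hypothesis handling the far part, and a good-$\lambda$/stopping-time inequality for the localized maximal function of $\lvert Tf\rvert^2$, absorbed using $p<q$ and the $\L^{p/2}$-boundedness of the maximal operator. This matches the decomposition in~\eqref{Eq: Decomposition by sharp cutoff} and the key estimate~\eqref{Eq: Good lambda key} described in the text following the theorem.
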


In this theorem, the standard kernel property is replaced by the validity of~\eqref{Eq: Weak reverse Hoelder inequality}. If $\Omega = \IR^d$, then the proof builds on a good-$\lambda$ argument. If $\Omega$ is not $\IR^d$, one can define an appropriate operator on the whole space given by $S f := E_0 T R_{\Omega} f$, where $E_0$ extends functions on $\Omega$ by zero and $R_{\Omega}$ restricts functions on the whole space to $\Omega$. One can show, that if $T$ satisfies the assumptions of Theorem~\ref{Thm: Shen} on $\Omega$, then $S$ satisfies the assumptions of the same theorem with $\Omega$ set to $\IR^d$, cf.~\cite[p.~78f]{Tolksdorf_Dissertation}. If $\Omega = \IR^d$, an analysis of the good-$\lambda$ argument reveals that~\eqref{Eq: Weak reverse Hoelder inequality} enters the game only once, namely, in order to deduce an inequality of the form
\begin{align}
\label{Eq: Good lambda key}
\begin{aligned}
 &\lvert \{ x \in Q : M_{2 Q^*} (\lvert T f \rvert^2) (x) > \iota \} \rvert \\
 &\quad\leq \frac{C}{\iota} \int_{2 \alpha_2 Q^*} \lvert f \rvert^2 \; \d x + \frac{C \lvert Q \rvert}{\iota^{q / 2}} \bigg\{ \bigg( \frac{1}{\lvert Q \rvert} \int_{2 \alpha_2 Q^*} \lvert T f \rvert^2 \; \d x \bigg)^{\frac{1}{2}} + \sup_{Q^{\prime} \supset 2 Q^*} \bigg( \frac{1}{\lvert Q^{\prime} \rvert} \int_{Q^{\prime}} \lvert f \rvert^2 \; \d x \bigg)^{\frac{1}{2}} \bigg\}^q,
\end{aligned}
\end{align}
cf.~\cite[p.~76f]{Tolksdorf_Dissertation}. Here, $\iota > 0$ is arbitrary, $Q$ is a cube in $\IR^d$, $Q^*$ is its ``parent'', i.e., $Q$ arises from $Q^*$ by bisecting its sides, and $M_{2 Q^*}$ is the localized maximal operator
\begin{align*}
 M_{2 Q^*} g (x) := \sup_{\substack{x \in R \\ R \subset 2 Q^*}}\frac{1}{\lvert R \rvert} \int_R \lvert g \rvert \; \d y \qquad (x \in 2 Q^*),
\end{align*}
where in the supremum $R$ denotes a cube in $\IR^d$. To derive~\eqref{Eq: Good lambda key} from~\eqref{Eq: Weak reverse Hoelder inequality} and the $\L^2$-boundedness of $T$, notice that~\eqref{Eq: Weak reverse Hoelder inequality} can equivalently be formulated with cubes instead of balls. Then, $f$ is decomposed as $f = f \chi_{2 \alpha_2 Q^*} + f \chi_{\IR^d \setminus 2 \alpha_2 Q^*}$, where $\chi$ denotes the characteristic function of a set. This decomposition is used on the left-hand side of~\eqref{Eq: Good lambda key} to estimate
\begin{align}
\label{Eq: Decomposition by sharp cutoff}
\begin{aligned}
 \lvert \{ x \in Q : M_{2 Q^*} (\lvert T f \rvert^2) (x) > \iota \} \rvert &\leq \lvert \{ x \in Q : M_{2 Q^*} (\lvert T f \chi_{2 \alpha_2 Q^*} \rvert^2) (x) > \iota / 4 \} \rvert \\
 &\qquad + \lvert \{ x \in Q : M_{2 Q^*} (\lvert T f \chi_{\IR^d \setminus 2 \alpha_2 Q^*} \rvert^2) (x) > \iota / 4 \} \rvert.
\end{aligned}
\end{align}
The first term on the right-hand side is controlled by the weak type-$(1 , 1)$ estimate of the localized maximal operator and the $\L^2$-boundedness of $T$, yielding the first term on the right-hand side of~\eqref{Eq: Good lambda key}. The second term on the right-hand side is controlled by the embedding $\L^{q / 2} \hookrightarrow \L^{q / 2 , \infty}$ and the $\L^{q / 2}$-boundedness of the localized maximal operator followed by~\eqref{Eq: Weak reverse Hoelder inequality} and the $\L^2$-boundedness of $T$ yielding the remaining terms on the right-hand side of~\eqref{Eq: Good lambda key}, cf.~\cite[p.~76f]{Tolksdorf_Dissertation}. \par
Essentially, the only thing that happened in~\eqref{Eq: Decomposition by sharp cutoff} was that $T f$ was decomposed by means of
\begin{align}
\label{Eq: Decomposition of Tf}
 T f =  T f \chi_{2 \alpha_2 Q^*} +  T f \chi_{\IR^d \setminus 2 \alpha_2 Q^*}.
\end{align}
We would like to stress here, that this decomposition of $T f$ was induced by the linearity of $T$ and a decomposition of $f$. Clearly, one could imagine that other suitable decompositions of $T f$ into a sum of two functions exist and that these might not have anything to do with a decomposition of $f$. Taking this into account in the formulation of the $\L^p$-extrapolation theorem might yield a more flexible result. This could be an advantage if a certain structure of $f$ (such as solenoidality) is eminent and which is destroyed by multiplication by characteristic functions. This happens for example if one considers the map $T : f \mapsto \phi$, where $f$ is mapped to the pressure function corresponding to the Stokes resolvent problem~\eqref{Res} and~\eqref{Neu}. If $f$ is for example divergence-free, then $T f$ enjoys the decay estimate presented in Proposition~\ref{Prop: Pressure estimate Neumann} while $T f \chi_{2 \alpha_2 Q^*}$ and $T f \chi_{\IR^d \setminus 2 \alpha_2 Q^*}$ enjoy no decay estimates at all by Remark~\ref{Rem: No pressure decay}. This indicates the need of a formulation of Shen's $\L^p$-extrapolation theorem that does not rely on a particular decomposition of $T f$ and is presented below. \par
In the rest of this section, we discuss an adapted version of Theorem~\ref{Thm: Shen}, where~\eqref{Eq: Weak reverse Hoelder inequality} is replaced essentially by the validity of~\eqref{Eq: Good lambda key} (which has to be modified if $\Omega \neq \IR^d$). To this end, we say that $Q^*$ is the parent of a cube $Q \subset \IR^d$ if $Q$ arises from $Q^*$ by bisecting its sides. Moreover, for $x_0 \in \IR^d$ and $r > 0$ let $Q(x_0 , r)$ denote the non-degenerated cube in $\IR^d$ with center $x_0$ and $\diam(Q (x_0 , r)) = r$. Finally, for a number $\alpha > 0$ denote by $\alpha Q$ the cube $Q (x_0 , \alpha r)$. The discussion above together with an analysis of the proof of~\cite[Thm.~3.1]{Shen-Riesz_transform} readily shows the validity of the following theorem.

\begin{theorem}
\label{Thm: Modified Shen whole space}
Let $2 < p < q$, $f \in \L^2 (\IR^d ; \IC^m) \cap \L^p (\IR^d ; \IC^m)$, and let $T$ be an operator (not necessarily linear) such that $T(f)$ is defined and contained in $\L^2 (\IR^d ; \IC^n)$. \par
Suppose that there exist constants $\alpha_2 > \alpha_1 > 1$ and $\cC > 0$ such that all $\iota > 0$, all $Q = Q(x_0 , r)$ with $r > 0$ and $x_0 \in \IR^d$, and all parents $Q^*$ of $Q$ the estimate
\begin{align}
\label{Eq: Generalized weak reverse Hoelder inequality IR^d}
 \begin{aligned}
 &\lvert \{ x \in Q : M_{2 Q^*} (\lvert T(f) \rvert^2) (x) > \iota \} \rvert \\
 &\leq \frac{\cC}{\iota} \int_{2 \alpha_2 Q^*} \lvert f \rvert^2 \; \d x + \frac{\cC \lvert Q \rvert}{\iota^{q / 2}} \bigg\{ \bigg( \frac{1}{\lvert Q \rvert} \int_{2 \alpha_2 Q^*} \lvert T(f) \rvert^2 \; \d x \bigg)^{\frac{1}{2}} + \sup_{Q^{\prime} \supset 2 Q^*} \bigg( \frac{1}{\lvert Q^{\prime} \rvert} \int_{Q^{\prime}} \lvert f \rvert^2 \; \d x \bigg)^{\frac{1}{2}} \bigg\}^q,
 \end{aligned}
\end{align}
holds. Here the supremum runs over all cubes $Q^{\prime}$ containing $2 Q^*$. \par
Then there exists a constant $C > 0$ depending on $d$, $p$, $q$, $\alpha_1$, $\alpha_2$, and $\cC$ such that
\begin{align*}
 \| T(f) \|_{\L^p (\IR^d ; \IC^n)} \leq C \| f \|_{\L^p (\IR^d ; \IC^m)}.
\end{align*}
\end{theorem}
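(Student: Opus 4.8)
The plan is to reproduce the good-$\lambda$ argument at the heart of Shen's extrapolation theorem~\cite[Thm.~3.1]{Shen-Riesz_transform}, in the form recalled in the discussion preceding the statement and in~\cite[p.~76f]{Tolksdorf_Dissertation}. The crucial observation is the one already stressed there: in that argument the weak reverse H\"older inequality~\eqref{Eq: Weak reverse Hoelder inequality}, together with the $\L^2$-boundedness and the \emph{linearity} of $T$ (the latter only through the splitting~\eqref{Eq: Decomposition of Tf} used in~\eqref{Eq: Decomposition by sharp cutoff}), enters exactly once, namely to produce the level-set inequality~\eqref{Eq: Good lambda key}; everything afterwards manipulates solely the fixed function $\lvert T(f) \rvert^2$, its localized maximal functions, and $\lvert f \rvert^2$. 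Since the present hypothesis~\eqref{Eq: Generalized weak reverse Hoelder inequality IR^d} \emph{is} the inequality~\eqref{Eq: Good lambda key}, the proof reduces to carrying out that second half of the argument, which requires only $T(f) \in \L^2(\IR^d ; \IC^n)$ and in particular never invokes linearity of $T$ again.

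Concretely, I would set $w := \lvert T(f) \rvert^2 \in \L^1(\IR^d)$, fix a cube $Q_0 \subset \IR^d$, and estimate $\int_{Q_0} (M_{2 Q_0} w)^{p/2} \; \d x$; the claim then follows by letting $Q_0 \uparrow \IR^d$, since $\lvert T(f) \rvert^2 \leq M_{2 Q_0} w$ a.e.\ on $Q_0$. By the layer-cake formula this amounts to bounding $\lvert \{ x \in Q_0 : M_{2 Q_0} w(x) > \iota \} \rvert$ for $\iota$ above a fixed multiple $\iota_0$ of the average of $w$ over $2 Q_0$. For such $\iota$ one performs a Calder\'on--Zygmund stopping-time decomposition of this level set into maximal dyadic subcubes $\{ Q_j \}$ of $Q_0$: their parents $Q_j^*$ satisfy the stopping bound (the average of $w$ over $Q_j^*$ is $\lesssim \iota$), the dilates $\{ 2 \alpha_2 Q_j^* \}$ have overlap bounded in terms of $d$ and $\alpha_2$, and one has the purely maximal-function localization
\begin{align*}
 \{ x \in Q_0 : M_{2 Q_0} w(x) > A \iota \} \subset \bigcup_j \{ x \in Q_j : M_{2 Q_j^*} w(x) > c \iota \}
\end{align*}
up to a null set, for dimensional constants $A > 1 > c$. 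Applying~\eqref{Eq: Generalized weak reverse Hoelder inequality IR^d} to each $Q_j$ (with $Q = Q_j$, $Q^* = Q_j^*$, level $c\iota$), using the stopping bound to replace the $\lvert T(f) \rvert^2$-average inside the bracket by a multiple of $\iota$ (enlarging a $Q_0$-dyadic cube by the fixed factor $2\alpha_2$ costs only a constant in averages), bounding $\sup_{Q' \supset 2\alpha_2 Q_j^*}$ of the $\lvert f \rvert^2$-average pointwise on $Q_j$ by the Hardy--Littlewood maximal function $\mathcal{M}(\lvert f \rvert^2)$, summing over $j$ with $\sum_j \lvert Q_j \rvert \leq \lvert \{ x \in Q_0 : M_{2 Q_0} w(x) > \iota \} \rvert$ and the bounded overlap, then multiplying by $\iota^{p/2-1}$, integrating over $(\iota_0 , \infty)$, and finally invoking Fubini together with $\int_{\iota_0}^{\mathcal{M}(w)(x)} \iota^{p/2-2} \; \d\iota \lesssim \mathcal{M}(w)(x)^{p/2-1}$ (valid since $p>2$), H\"older's inequality with exponents $p/2$ and $(p/2)'$, and the strong $(p/2,p/2)$-bound for $\mathcal{M}$, one arrives at
\begin{align*}
 \int_{Q_0} (M_{2 Q_0} w)^{p/2} \; \d x \leq \iota_0^{p/2} \lvert Q_0 \rvert + \eps \int_{Q_0} (M_{2 Q_0} w)^{p/2} \; \d x + C_{\eps} \| f \|_{\L^p(\IR^d ; \IC^m)}^p
\end{align*}
for every $\eps > 0$, with the first and third terms $\eps$-independent and $C_\eps$ depending only on $d$, $p$, $q$, $\alpha_1$, $\alpha_2$, $\cC$.

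The step I expect to be the main obstacle is the final absorption: the $\eps$-term can be moved to the left only once $\int_{Q_0}(M_{2 Q_0} w)^{p/2}$ is known to be finite, which is \emph{not} guaranteed by $T(f) \in \L^2(\IR^d;\IC^n)$ alone. As in Shen's proof this is circumvented by a truncation argument: one restricts the integration in $\iota$ above to a bounded range $(\iota_0 , k)$, which replaces $M_{2 Q_0} w$ by $\min(M_{2 Q_0} w , k)$ on the left-hand side while leaving~\eqref{Eq: Generalized weak reverse Hoelder inequality IR^d} and all other steps untouched, absorbs the now manifestly finite $\eps$-term, and then lets $k \to \infty$ by monotone convergence. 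Letting $Q_0 \uparrow \IR^d$ afterwards removes the remainder $\iota_0^{p/2}\lvert Q_0 \rvert$, which tends to zero because $\iota_0 \lesssim \| w \|_{\L^1(\IR^d)} / \lvert Q_0 \rvert$ and $p > 2$, and yields $\| T(f) \|_{\L^p(\IR^d;\IC^n)} \leq C \| f \|_{\L^p(\IR^d;\IC^m)}$ with the asserted dependence of $C$; tracking the constants through the stopping-time decomposition (where only $d$ and $\alpha_2$ enter) and the two maximal-function estimates completes the bookkeeping.
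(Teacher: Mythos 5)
Your proposal is correct and follows essentially the same route the paper takes: the paper itself gives no detailed argument but simply observes that the hypothesis~\eqref{Eq: Generalized weak reverse Hoelder inequality IR^d} is precisely the level-set inequality~\eqref{Eq: Good lambda key} into which the weak reverse H\"older estimate and the linearity of $T$ feed in Shen's original proof, so that the remaining good-$\lambda$/Calder\'on--Zygmund machinery (stopping-time decomposition, summation with bounded overlap, integration in $\iota$, truncation and absorption using $p<q$) applies verbatim. You have in fact supplied more of the details than the paper does.
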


Let $T$ be an operator acting on functions defined on $\Omega$ for some Lebesgue-measurable set $\Omega \subset \IR^d$ and let $f \in \L^2 (\Omega ; \IC^m) \cap \L^q (\Omega ; \IC^m)$. The following theorem is a direct consequence of Theorem~\ref{Thm: Modified Shen whole space} when applied to the operator $S := E_0 T R_{\Omega}$ and the function $E_0 f \in \L^2 (\IR^d ; \IC^m) \cap \L^q (\IR^d ; \IC^m)$.

\begin{theorem}
\label{Thm: Modified Shen domain}
Let $\Omega \subset \IR^d$ be Lebesgue-measurable,  $2 < p < q$, $f \in \L^2 (\Omega ; \IC^m) \cap \L^p (\Omega ; \IC^m)$, and let $T$ be an operator (not necessarily linear) such that $T(f)$ is defined and contained in $\L^2 (\IR^d ; \IC^n)$. \par
Suppose that there exist constants $\alpha_2 > \alpha_1 > 1$ and $\cC > 0$ such that for all $\iota > 0$, all $Q = Q(x_0 , r)$ with $r > 0$ and $x_0 \in \IR^d$, and all parents $Q^*$ of $Q$ with $(2 Q^*) \cap \Omega \neq \emptyset$ the estimate
\begin{align}
\label{Eq: Generalized weak reverse Hoelder inequality domain}
 \begin{aligned}
 &\lvert \{ x \in Q : M_{2 Q^*} (\lvert E_0 T (f) \rvert^2) (x) > \iota \} \rvert \leq \frac{\cC}{\iota} \int_{(2 \alpha_2 Q^*) \cap \Omega} \lvert f \rvert^2 \; \d x \\
 &\qquad + \frac{\cC \lvert Q \rvert}{\iota^{q / 2}} \bigg\{ \bigg( \frac{1}{\lvert Q \rvert} \int_{(2 \alpha_2 Q^*) \cap \Omega} \lvert T (f) \rvert^2 \; \d x \bigg)^{\frac{1}{2}} + \sup_{Q^{\prime} \supset 2 Q^*} \bigg( \frac{1}{\lvert Q^{\prime} \rvert} \int_{Q^{\prime} \cap \Omega} \lvert f \rvert^2 \; \d x \bigg)^{\frac{1}{2}} \bigg\}^q,
 \end{aligned}
\end{align}
holds. Here the supremum runs over all cubes $Q^{\prime}$ containing $2 Q^*$. \par
Then there exists a constant $C > 0$ depending on $d$, $p$, $q$, $\alpha_1$, $\alpha_2$, and $\cC$ such that
\begin{align*}
 \| T(f) \|_{\L^p (\Omega ; \IC^n)} \leq C \| f \|_{\L^p (\Omega ; \IC^m)}.
\end{align*}
\end{theorem}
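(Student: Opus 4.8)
The plan is to reduce the statement to Theorem~\ref{Thm: Modified Shen whole space} by transplanting the problem to all of $\IR^d$, exactly as announced in the sentence preceding the statement. First I would set $S := E_0 T R_{\Omega}$, where $R_{\Omega}$ restricts functions on $\IR^d$ to $\Omega$ and $E_0$ extends functions on $\Omega$ to $\IR^d$ by zero, and put $g := E_0 f$, which lies in $\L^2 (\IR^d ; \IC^m) \cap \L^p (\IR^d ; \IC^m)$ because $f \in \L^2 (\Omega ; \IC^m) \cap \L^p (\Omega ; \IC^m)$. Since $R_{\Omega} E_0 f = f$, we get $S(g) = E_0 T(f) \in \L^2 (\IR^d ; \IC^n)$, so $S(g)$ is an admissible input for Theorem~\ref{Thm: Modified Shen whole space}. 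The point of this reduction is that $\| S(g) \|_{\L^p (\IR^d ; \IC^n)} = \| T(f) \|_{\L^p (\Omega ; \IC^n)}$ and $\| g \|_{\L^p (\IR^d ; \IC^m)} = \| f \|_{\L^p (\Omega ; \IC^m)}$, so the desired inequality follows once the good-$\lambda$ hypothesis~\eqref{Eq: Generalized weak reverse Hoelder inequality IR^d} is verified for $S$ and $g$ with the same constants $\alpha_1$, $\alpha_2$, $\cC$.

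Next I would check that hypothesis. Fix $\iota > 0$, a cube $Q = Q(x_0 , r)$ with $x_0 \in \IR^d$ and $r > 0$, and its parent $Q^*$. If $(2 Q^*) \cap \Omega = \emptyset$, then $S(g) = E_0 T(f)$ vanishes almost everywhere on $2 Q^*$, so $M_{2 Q^*} (\lvert S(g) \rvert^2) \equiv 0$ on $2 Q^*$ and the set on the left-hand side of~\eqref{Eq: Generalized weak reverse Hoelder inequality IR^d} is empty; the inequality holds trivially. If $(2 Q^*) \cap \Omega \neq \emptyset$, then, using that both $g$ and $S(g)$ are supported in $\Omega$, one rewrites the integrals over $2 \alpha_2 Q^*$ and the supremum over cubes $Q^{\prime} \supset 2 Q^*$ appearing in~\eqref{Eq: Generalized weak reverse Hoelder inequality IR^d} as the corresponding integrals over $(2 \alpha_2 Q^*) \cap \Omega$ and $Q^{\prime} \cap \Omega$; the assumed estimate~\eqref{Eq: Generalized weak reverse Hoelder inequality domain} (which applies precisely because $(2 Q^*) \cap \Omega \neq \emptyset$) then reads exactly as~\eqref{Eq: Generalized weak reverse Hoelder inequality IR^d} for $S$ and $g$.

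Finally, Theorem~\ref{Thm: Modified Shen whole space} applied to $S$ and $g$ yields a constant $C > 0$ depending only on $d$, $p$, $q$, $\alpha_1$, $\alpha_2$, $\cC$ with $\| S(g) \|_{\L^p (\IR^d ; \IC^n)} \leq C \| g \|_{\L^p (\IR^d ; \IC^m)}$, which by the two norm identifications above is the claimed bound $\| T(f) \|_{\L^p (\Omega ; \IC^n)} \leq C \| f \|_{\L^p (\Omega ; \IC^m)}$. I do not expect a genuine analytic obstacle: the entire good-$\lambda$ / Calder\'on--Zygmund machinery is already packaged inside Theorem~\ref{Thm: Modified Shen whole space}, and the only point needing a moment's thought is the case of cubes $Q^*$ not meeting $\Omega$, which is dispatched by the observation that $E_0 T(f)$ is supported in $\Omega$.
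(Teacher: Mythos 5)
Your proposal is correct and is exactly the reduction the paper intends: it applies Theorem~\ref{Thm: Modified Shen whole space} to $S := E_0 T R_{\Omega}$ and $E_0 f$, with the only detail worth recording being your (correct) observation that cubes with $(2Q^*) \cap \Omega = \emptyset$ make the left-hand side of~\eqref{Eq: Generalized weak reverse Hoelder inequality IR^d} vanish because $E_0 T(f)$ is supported in $\Omega$. This matches the paper's one-line argument, with the routine verifications filled in.
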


\section{Estimates on the resolvent on convex domains}
\label{Sec: Resolvent estimates on convex domains}

\noindent In this section we verify the assumptions of Theorem~\ref{Thm: Modified Shen domain} for a particular choice of operators $T$. In the case of elliptic operators, a common way to do so is to establish the validity of a Caccioppoli type estimate, which we establish now for the Stokes resolvent problem, see also~\cite[Prop.~5.3.2]{Tolksdorf_Dissertation},~\cite[Lem.~3.8]{Choe_Kozono}, and~\cite[Thm.~1.1]{Giaquinta_Modica}.

\begin{lemma}
\label{Lem: Caccioppoli}
Let $\theta \in [0 , \pi)$, $\lambda \in \S_{\theta}$, $x_0 \in \IR^d$, $r > 0$, and $\mu \in [-1 , 1)$. Let $u \in \cH^1_{\sigma} (Q (x_0 , 2 r) \cap \Omega)$ and $\phi \in \L^2 (Q (x_0 , 2 r) \cap \Omega)$ solve
\begin{align*}
 \lambda \int_{Q (x_0 , 2 r) \cap \Omega} u \cdot \overline{\varphi} \; \d x + \int_{Q (x_0 , 2 r) \cap \Omega} \nabla u \cdot \overline{\nabla \varphi} \; \d x - \int_{Q (x_0 , 2 r) \cap \Omega} \phi \, \overline{\divergence(\varphi)} \; \d x = 0
\end{align*}
for all $\varphi \in \H^1 (Q (x_0 , 2 r) \cap \Omega ; \IC^d)$ with $\varphi = 0$ on $(\partial Q (x_0 , 2 r)) \cap \Omega$. Then there exists a constant $C > 0$ depending only on $\theta$ and $d$ such that
\begin{align*}
 &\lvert \lambda \rvert \int_{Q(x_0 , r) \cap \Omega} \lvert u \rvert^2 \; \d x + \int_{Q(x_0 , r) \cap \Omega} \lvert \nabla u \rvert^2 \; \d x \\
 &\qquad \leq \frac{C}{r^2} \bigg( \frac{1}{\lvert \lambda \rvert} \int_{Q(x_0 , 2 r) \cap \Omega} \lvert \phi \rvert^2 \; \d x + \int_{Q(x_0 , 2 r) \cap \Omega} \lvert u \rvert^2 \; \d x \bigg).
\end{align*}
\end{lemma}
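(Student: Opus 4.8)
The plan is to run the standard Caccioppoli argument, with the resolvent parameter tracked through the sector estimate~\eqref{Eq: Inverse triangle inequality}. I would fix a real cut-off $\eta \in \C_c^{\infty}(Q(x_0, 2r))$ with $0 \le \eta \le 1$, $\eta \equiv 1$ on $Q(x_0, r)$, and $\lvert \nabla \eta \rvert \le C/r$, and test the weak formulation from the statement against $\varphi := \eta^2 u$. Because $\eta$ vanishes near $\partial Q(x_0,2r)$, this $\varphi$ belongs to $\H^1(Q(x_0,2r) \cap \Omega; \IC^d)$ and vanishes on $(\partial Q(x_0,2r)) \cap \Omega$, hence is admissible; note it need not vanish on $\partial\Omega$, which is precisely what the hypothesis allows. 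Expanding $\partial_j(\eta^2 u_i) = \eta^2 \partial_j u_i + 2\eta(\partial_j\eta)u_i$ and using $\divergence u = 0$ to get $\divergence(\eta^2 u) = 2\eta\, \nabla\eta \cdot u$, the weak identity becomes, with all integrals over $Q(x_0,2r) \cap \Omega$,
\begin{align*}
 \lambda \int \eta^2 \lvert u \rvert^2 \; \d x + \int \eta^2 \lvert \nabla u \rvert^2 \; \d x = - 2 \int \eta\, (\partial_j\eta)\, (\partial_j u_i)\, \overline{u_i} \; \d x + 2 \int \phi\, \eta\, \overline{\nabla\eta \cdot u} \; \d x .
\end{align*}

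To extract a real inequality I would note that the left-hand side has the form $z + \alpha$ with $z = \lambda \int \eta^2 \lvert u \rvert^2 \in \overline{\S_\theta} \cup \{0\}$ and $\alpha = \int \eta^2 \lvert \nabla u \rvert^2 \ge 0$, so by~\eqref{Eq: Inverse triangle inequality} the quantity $\lvert\lambda\rvert \int \eta^2 \lvert u \rvert^2 + \int \eta^2 \lvert \nabla u \rvert^2$ is controlled by $C_\theta$ times the modulus of the right-hand side. The first term on the right is estimated by Young's inequality and $\lvert\nabla\eta\rvert \le C/r$ as $\delta \int \eta^2 \lvert \nabla u \rvert^2 + (C/\delta r^2) \int_{Q(x_0,2r)\cap\Omega} \lvert u \rvert^2$. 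For the pressure term the point is to split the product with the right $\lvert\lambda\rvert$-weights: writing the integrand as $\bigl(\lvert\lambda\rvert^{-1/2} r^{-1} \lvert\phi\rvert\bigr)\bigl(\lvert\lambda\rvert^{1/2} \eta \lvert u\rvert\bigr)$ and applying Young's inequality gives a bound $(C/\eps\lvert\lambda\rvert r^2) \int_{Q(x_0,2r)\cap\Omega} \lvert\phi\rvert^2 + \eps \lvert\lambda\rvert \int \eta^2 \lvert u\rvert^2$ for any $\eps > 0$.

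Choosing $\delta$ and $\eps$ small enough in terms of $C_\theta$ only, the terms $\int \eta^2 \lvert\nabla u\rvert^2$ and $\lvert\lambda\rvert \int \eta^2 \lvert u\rvert^2$ produced on the right are absorbed into the left-hand side; since $\eta \equiv 1$ on $Q(x_0,r)$, what remains is exactly the asserted estimate with a constant depending only on $\theta$ and $d$. I do not expect a genuine obstacle: the argument is routine. The two points deserving a line of care are the admissibility of $\eta^2 u$ as a test function (it only has to vanish on $(\partial Q(x_0,2r)) \cap \Omega$, not on $\partial\Omega$, which is why $\eta$ may be taken compactly supported inside $Q(x_0,2r)$) and the bookkeeping in the Young split for the pressure term, which is what keeps the final constant free of $\lvert\lambda\rvert$ and dependent only on $\theta$ and $d$.
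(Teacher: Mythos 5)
Your argument is correct and is exactly the standard Caccioppoli cutoff argument that the paper invokes by reference to \cite[Prop.~5.3.2]{Tolksdorf_Dissertation}: test with $\eta^2 u$, use $\divergence(\eta^2 u)=2\eta\,\nabla\eta\cdot u$, apply the sector inequality~\eqref{Eq: Inverse triangle inequality} to the left-hand side, and absorb via Young's inequality with the $\lvert\lambda\rvert^{\pm 1/2}$ weights. Both points you flag (admissibility of the test function, which need not vanish on $\partial\Omega$, and the weighted Young split for the pressure term) are handled correctly, so nothing is missing.
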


\begin{proof}
The proof follows literally the lines of~\cite[Prop.~5.3.2]{Tolksdorf_Dissertation} (which proves this inequality in the case of homogeneous Dirichlet boundary conditions on $\partial \Omega$).
\end{proof}

Another ingredient that is needed in the verification of the assumptions of Theorem~\ref{Thm: Modified Shen domain} is Sobolev's inequality on convex domains. This is obtained by combining~\cite[Lem.~7.16]{Gilbarg_Trudinger} with either~\cite[Lem.~7.12]{Gilbarg_Trudinger} (in the case $\lvert 1 / p - 1 / q \rvert < 1 / d$) or~\cite[Thm.~3.1.4]{Adams_Hedberg} (in the case $\lvert 1 / p - 1 / q \rvert = 1 / d$).

\begin{proposition}
\label{Prop: Sobolev}
Let $\Xi \subset \IR^d$ be bounded and convex and $1 \leq p < q < \infty$ with $\lvert 1 / p - 1 / q \rvert \leq 1 / d$. Then there exists a constant $C > 0$ depending only on $d$, $p$, and $q$ such that for all $u \in \W^{1 , p} (\Xi)$
\begin{align*}
 \bigg( \int_{\Xi} \lvert u \rvert^q \; \d x \bigg)^{\frac{1}{q}} \leq \lvert \Xi \rvert^{\frac{1}{q} - \frac{1}{p}} \bigg( \int_{\Xi} \lvert u \rvert^p \; \d x \bigg)^{\frac{1}{p}} + C \lvert \Xi \rvert^{\frac{1}{d} - ( \frac{1}{p} - \frac{1}{q}) - 1} \diam(\Xi)^d \bigg( \int_{\Xi} \lvert \nabla u \rvert^p \; \d x \bigg)^{\frac{1}{p}}.
\end{align*}
\end{proposition}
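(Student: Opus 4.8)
The plan is to reduce the inequality to a mapping property of the Riesz potential of order one, using convexity only through the representation of $u - u_\Xi$ in terms of $\nabla u$. Write $u_\Xi := \abs{\Xi}^{-1} \int_\Xi u \, \d x$. Since $\Xi$ is convex, \cite[Lem.~7.16]{Gilbarg_Trudinger} provides a constant $c_d > 0$ depending only on $d$ such that
\begin{align*}
 \abs{u(x) - u_\Xi} \leq c_d \, \frac{\diam(\Xi)^d}{\abs{\Xi}} \int_\Xi \frac{\abs{\nabla u(y)}}{\abs{x - y}^{d - 1}} \, \d y =: c_d \, \frac{\diam(\Xi)^d}{\abs{\Xi}} \, (V \abs{\nabla u})(x)
\end{align*}
for almost every $x \in \Xi$, where $V$ is the Riesz potential of order one restricted to $\Xi$ (the operator $V_{1/d}$ in the notation of \cite{Gilbarg_Trudinger}). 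I would then take $\L^q(\Xi)$-norms, use the triangle inequality $\| u \|_{\L^q(\Xi)} \leq \| u - u_\Xi \|_{\L^q(\Xi)} + \| u_\Xi \|_{\L^q(\Xi)}$, and bound the mean term by Hölder's inequality via $\| u_\Xi \|_{\L^q(\Xi)} = \abs{u_\Xi} \abs{\Xi}^{1/q} \leq \abs{\Xi}^{1/q - 1/p} \| u \|_{\L^p(\Xi)}$. This already produces the first summand of the asserted inequality with constant exactly $1$, leaving
\begin{align*}
 \| u \|_{\L^q(\Xi)} \leq \abs{\Xi}^{1/q - 1/p} \| u \|_{\L^p(\Xi)} + c_d \, \frac{\diam(\Xi)^d}{\abs{\Xi}} \, \| V \abs{\nabla u} \|_{\L^q(\Xi)}.
\end{align*}

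It then remains to estimate $\| V \abs{\nabla u} \|_{\L^q(\Xi)}$ in terms of $\| \nabla u \|_{\L^p(\Xi)}$ with the correct power of $\abs{\Xi}$. Setting $\delta := 1/p - 1/q \in [0 , 1/d]$, in the non-critical case $\delta < 1/d$ this is precisely \cite[Lem.~7.12]{Gilbarg_Trudinger}, which gives $\| V g \|_{\L^q(\Xi)} \leq C \abs{\Xi}^{1/d - \delta} \| g \|_{\L^p(\Xi)}$ with $C$ depending only on $d$, $p$, $q$. In the endpoint case $\delta = 1/d$ the constant in Lemma~7.12 degenerates, and I would instead invoke the sharp Hardy--Littlewood--Sobolev-type estimate for Riesz potentials on sets of finite measure, \cite[Thm.~3.1.4]{Adams_Hedberg}, which yields the same bound with $\abs{\Xi}^{1/d - \delta} = 1$. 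Inserting this into the previous display produces
\begin{align*}
 \| u \|_{\L^q(\Xi)} \leq \abs{\Xi}^{1/q - 1/p} \| u \|_{\L^p(\Xi)} + C \abs{\Xi}^{1/d - (1/p - 1/q) - 1} \diam(\Xi)^d \, \| \nabla u \|_{\L^p(\Xi)},
\end{align*}
which is the claim.

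The only genuinely delicate points are, first, the bookkeeping of the explicit dependence of the constants on $\abs{\Xi}$ and $\diam(\Xi)$: one must check that all powers combine so that the final constant depends on $d$, $p$, $q$ only, which is why it is convenient to use the quantitative forms of the lemmas in \cite{Gilbarg_Trudinger} rather than qualitative Sobolev embeddings; and second, the treatment of the borderline exponent $1/p - 1/q = 1/d$, where the elementary near/far splitting of the kernel underlying \cite[Lem.~7.12]{Gilbarg_Trudinger} breaks down and one really needs the sharp Riesz-potential bound from \cite{Adams_Hedberg}.
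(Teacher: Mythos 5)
Your argument is exactly the one the paper intends: the statement is introduced there with the remark that it "is obtained by combining \cite[Lem.~7.16]{Gilbarg_Trudinger} with either \cite[Lem.~7.12]{Gilbarg_Trudinger} (in the case $\lvert 1/p - 1/q \rvert < 1/d$) or \cite[Thm.~3.1.4]{Adams_Hedberg} (in the case $\lvert 1/p - 1/q \rvert = 1/d$)", and your write-up carries out precisely this combination with the correct bookkeeping of the powers of $\lvert \Xi \rvert$ and $\diam(\Xi)$. No further comments are needed.
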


Now, we are in the position to present the proof of Theorem~\ref{Thm: Main}.

\begin{proof}[Proof of Theorem~\ref{Thm: Main}]
We distinguish the cases $p = 2$, $2 < p < 2d / (d - 2)$, and $2d / (d + 2) < p < 2$. Notice that the case $p = 2$ readily follows by Propositions~\ref{Prop: Resolvent} and~\ref{Prop: Pressure estimate Neumann}.

\subsection*{Case 1: It holds $2 < p < 2d / (d - 2)$}

Let $\Omega$ be a bounded and convex domain and let $\Omega_k$ be a bounded, convex, and smooth domain introduced in Remark~\ref{Rem: Approximating sequence}. Let $f \in \C_{\sigma}^{\infty} (\overline{\Omega_k})$ and let $u$ be given by $u := (\lambda + B_{\mu , k})^{-1} f$ and let $\phi$ denote the associated pressure. Here $B_{\mu , k}$ denotes the Stokes operator subject to Neumann-type boundary conditions on $\Omega_k$. Notice that $u$ and $\phi$ are smooth up to the boundary by Remark~\ref{Rem: Higher regularity}. We show that
\begin{align*}
 T_{\lambda} f := \begin{pmatrix} \lvert \lambda \rvert u \\ \lvert \lambda \rvert^{1 / 2} \nabla u \\ \lvert \lambda \rvert^{1 / 2} \phi \end{pmatrix}
\end{align*}
is uniformly bounded with respect to $\lambda$ from $\cL^p_{\sigma} (\Omega_k)$ to $\L^p (\Omega_k ; \IC^{d + d^2 + 1}))$. To this end, we show in the following that $T_{\lambda} f$ satisfies~\eqref{Eq: Generalized weak reverse Hoelder inequality domain} with $q := 2d / (d - 2)$ in the case $d \geq 3$ and $q > 2$ arbitrary in the case $d = 2$. To obtain the uniform boundedness with respect to $\lambda$, we need to verify~\eqref{Eq: Generalized weak reverse Hoelder inequality domain} with involved constants independent of $\lambda$. Let $Q = Q(x_0 , r) \subset \IR^d$ be a cube with center $x_0$ and $\diam(Q) = r$ that satisfies $(2 Q^*) \cap \Omega \neq \emptyset$. Then, we consider the following three cases.

\subsection*{Case 1.1: It holds $2 r > \sqrt{d} \diam(\Omega)$}
The conditions imposed on $Q^*$ and $r$ imply that for all $k \in \IN$ we have $\Omega_k \subset 4 Q^*$. In this case, use the weak-type $(1 , 1)$ estimate of the localized maximal operator and the $\L^2$-boundedness of $T_{\lambda}$ (cf.\@ Propositions~\ref{Prop: Resolvent} and~\ref{Prop: Pressure estimate Neumann}, notice that the constants only depend on $d$, $\theta$, and $\mu$) to obtain
\begin{align*}
 \lvert \{ x \in Q &: M_{2 Q^*} (\lvert E_0 T_{\lambda} f \rvert^2) (x) > \iota \} \rvert \leq \frac{C}{\iota} \int_{\Omega_k} \lvert T_{\lambda} f \rvert^2 \; \d x \leq \frac{C}{\iota} \int_{(4 Q^*) \cap \Omega} \lvert f \rvert^2 \; \d x.
\end{align*}

\subsection*{Case 1.2: It holds $2 r \leq \sqrt{d} \diam(\Omega)$ and $(2 Q^*) \cap \partial \Omega_k \neq \emptyset$}
Let $y \in (2 Q^*) \cap \partial \Omega_k$ and let $\cQ := Q (y , 4 r) \subset \IR^d$ be the cube with center $y$ and $\diam(\cQ) = 4 r$. In this case, it holds $2 Q^* \subset \cQ$. Define functions $v$ and $w$ as follows. Let $\widetilde{B}_{\mu , k}$ denote the Stokes operator subject to Neumann-type boundary conditions on $(8 \cQ) \cap \Omega_k$. Notice that the restriction of $f$ to $(8 \cQ) \cap \Omega_k$ is still in $\C^{\infty}_{\sigma} (\overline{(8 \cQ) \cap \Omega_k})$ and thus define
\begin{align*}
 v := (\lambda + \widetilde{B_{\mu}})^{-1} R_{(8\cQ) \cap \Omega_k} f \qquad \text{and} \qquad w := u - v,
\end{align*}
where $R_{(8 \cQ) \cap \Omega_k}$ denotes the restriction operator to $(8 \cQ) \cap \Omega_k$. Analogously, define the pressures $\vartheta$ associated to $v$ and $R_{(8 \cQ) \cap \Omega_k} f$ and $\psi := \phi - \vartheta$. Thus, in the sense of distributions it holds
\begin{align*}
\left\{ \begin{aligned}
 \lambda v - \Delta v + \nabla \vartheta &= R_{(8 \cQ) \cap \Omega_k} f && \text{in } (8 \cQ) \cap \Omega_k \\
 \divergence(v) &= 0 && \text{in } (8 \cQ) \cap \Omega_k \\
 \{ D v + \mu [D v]^{\top} \} n - \vartheta n &= 0 && \text{on } \partial [(8 \cQ) \cap \Omega_k]
\end{aligned} \right.
\end{align*}
and
\begin{align*}
\left\{ \begin{aligned}
 \lambda w - \Delta w + \nabla \psi &= 0 && \text{in } (8 \cQ) \cap \Omega_k \\
 \divergence(w) &= 0 && \text{in } (8 \cQ) \cap \Omega_k \\
 \{ D w + \mu [D w]^{\top} \} n - \psi n &= 0 && \text{on } (8 \cQ) \cap \partial \Omega_k.
\end{aligned} \right.
\end{align*}
Here, $n$ denotes the outward unit normal vector corresponding to the set $(8 \cQ) \cap \Omega_k$. Notice that in $(8 \cQ) \cap \Omega_k$ the identities $u = v + w$ and $\phi = \vartheta + \psi$ hold and that $w$ and $\vartheta$ are in general non-zero as there is no boundary condition on the remaining boundary part $\partial [(8 \cQ) \cap \Omega_k] \setminus [(8 \cQ) \cap \partial \Omega_k]$ imposed. Let $\widetilde{v}, \widetilde{\nabla v}, \widetilde{\vartheta}, \widetilde{w}, \widetilde{\nabla w}$, and $\widetilde{\psi}$ denote the extensions by zero to $\IR^d$. Then for $\iota > 0$, we have
\begin{align*}
 \lvert \{ x \in Q &: M_{2 Q^*} (\lvert E_0 T_{\lambda} f \rvert^2) (x) > \iota \} \rvert \\
 &\qquad\leq \lvert \{ x \in Q : M_{2 Q^*} (\lvert \lvert \lambda \rvert \widetilde{v} \rvert^2 + \lvert \lvert \lambda \rvert^{1 / 2} \widetilde{\nabla v} \rvert^2 + \lvert \lvert \lambda \rvert^{1 / 2} \widetilde{\vartheta} \rvert^2) (x) > \iota / 4 \} \rvert \\
 &\qquad\qquad + \lvert \{ x \in Q : M_{2 Q^*} (\lvert \lvert \lambda \rvert \widetilde{w} \rvert^2 + \lvert \lvert \lambda \rvert^{1 / 2} \widetilde{\nabla w} \rvert^2 + \lvert \lvert \lambda \rvert^{1 / 2} \widetilde{\psi} \rvert^2) (x) > \iota / 4 \} \rvert \\
 &\qquad =: \mathrm{I} + \mathrm{II}.
\end{align*}
The first term is controlled by the weak-type $(1 , 1)$ estimate of the localized maximal operator followed by Proposition~\ref{Prop: Resolvent}~\eqref{Prop: Neumann} and Proposition~\ref{Prop: Pressure estimate Neumann} yielding
\begin{align*}
 \mathrm{I} \leq \frac{C}{\iota} \int_{(2 Q^*) \cap \Omega_k} \big( \lvert \lvert \lambda \rvert v \rvert^2 + \lvert \lvert \lambda \rvert^{1 / 2} \nabla v \rvert^2 + \lvert \lvert \lambda \rvert^{1 / 2} \vartheta \rvert^2 \big) \; \d x \leq \frac{C}{\iota} \int_{(32 Q^*) \cap \Omega_k} \lvert f \rvert^2 \; \d x,
\end{align*}
where $C > 0$ depends only on $d$, $\theta$, and $\mu$. \par
The second term, $\mathrm{II}$, is controlled by the embedding $\L^{q / 2 , \infty} (2 Q^*) \hookrightarrow \L^{q / 2} (2 Q^*)$, the $\L^{q / 2}$-boundedness of the localized maximal operator, and the fact $2 Q^* \subset \cQ$. Notice that the constants in these estimates depend only on $d$ and $q$ so that
\begin{align*}
 \mathrm{II} \leq \frac{C}{\iota^{q / 2}} \int_{\cQ \cap \Omega_k} \big( \lvert \lvert \lambda \rvert w \rvert^q + \lvert \lvert \lambda \rvert^{1 / 2} \nabla w \rvert^q + \lvert \lvert \lambda \rvert^{1 / 2} \psi \rvert^q \big) \; \d x.
\end{align*}
Next, apply Proposition~\ref{Prop: Sobolev} with $\Xi := \cQ \cap \Omega_k$ combined with~\eqref{Eq: d-set property}, to deduce
\begin{align}
\label{Eq: Hard term in reverse Holder}
\begin{aligned}
 \mathrm{II} &\leq \frac{C}{\iota^{q / 2}} r^d \bigg\{ r^{1 - d / 2} \lvert \lambda \rvert^{1 / 2} \bigg( \int_{\cQ \cap \Omega_k} \big( \lvert \lambda \rvert \lvert \nabla w \rvert^2 + \lvert \nabla^2 w \rvert^2 + \lvert \nabla \psi \rvert^2 \big) \; \d x \bigg)^{\frac{1}{2}} \\
 &\qquad + r^{- d / 2} \bigg( \int_{\cQ \cap \Omega_k} \big( \lvert \lvert \lambda \rvert w \rvert^2 + \lvert \lvert \lambda \rvert^{1 / 2} \nabla w \rvert^2 + \lvert \lvert \lambda \rvert^{1 / 2} \psi \rvert^2 \big) \; \d x \bigg)^{\frac{1}{2}} \bigg\}^q.
\end{aligned}
\end{align}
Due to~\eqref{Eq: d-set property} the constant $C > 0$ also depends on $\diam(\Omega)$ and on $r_0 > 0$, where $r_0$ is such that $B(0 , r_0) \subset \Omega - \{ x \}$ for some $x \in \Omega$. The second term on the right-hand side is estimated by virtue of $u = v + w$ and $\phi = \vartheta + \psi$, Propositions~\ref{Prop: Resolvent}~\eqref{Prop: Neumann} and~\ref{Prop: Pressure estimate Neumann}, and $8 \cQ \subset 32 Q^*$ as
\begin{align}
\label{Eq: Cheat estimate}
\begin{aligned}
 &\bigg( \int_{\cQ \cap \Omega_k} \big( \lvert \lvert \lambda \rvert w \rvert^2 + \lvert \lvert \lambda \rvert^{1 / 2} \nabla w \rvert^2 + \lvert \lvert \lambda \rvert^{1 / 2} \psi \rvert^2 \big) \; \d x \bigg)^{\frac{1}{2}} \\
 &\qquad \leq \bigg( \int_{\cQ \cap \Omega_k} \lvert T_{\lambda} f \rvert^2 \; \d x \bigg)^{\frac{1}{2}} + \bigg( \int_{\cQ \cap \Omega_k} \big( \lvert \lvert \lambda \rvert v \rvert^2 + \lvert \lvert \lambda \rvert^{1 / 2} \nabla v \rvert^2 + \lvert \lvert \lambda \rvert^{1 / 2} \vartheta \rvert^2 \big) \; \d x \bigg)^{\frac{1}{2}} \\
 &\qquad \leq C \bigg\{\bigg( \int_{(32 Q^*) \cap \Omega_k} \lvert T_{\lambda} f \rvert^2 \; \d x \bigg)^{\frac{1}{2}} + \bigg(\int_{(32 Q^*) \cap \Omega_k} \lvert f \rvert^2 \; \d x \bigg)^{\frac{1}{2}} \bigg\}.
\end{aligned}
\end{align}
The first term on the right-hand side in~\eqref{Eq: Hard term in reverse Holder} is estimated by virtue of Proposition~\ref{Prop: Reverse regularity estimates} as
\begin{align*}
 &\int_{\cQ \cap \Omega_k} \big( \lvert \lambda \rvert \lvert \nabla w \rvert^2 + \lvert \nabla^2 w \rvert^2 + \lvert \nabla \psi \rvert^2 \big) \; \d x \\
 &\qquad \leq C \bigg( \lvert \lambda \rvert^2 \int_{(2 \cQ) \cap \Omega_k} \lvert w \rvert^2 \; \d x + \frac{1}{r^2} \int_{(2 \cQ) \cap \Omega_k} \big( \lvert \nabla w \rvert^2 + \lvert \psi \rvert^2 \big) \; \d x \bigg).
\end{align*}
Employing Caccioppoli's inequality, Lemma~\ref{Lem: Caccioppoli}, to the first term on the right-hand side finally delivers
\begin{align}
\label{Eq: Application of Caccioppoli}
\begin{aligned}
 &\int_{\cQ \cap \Omega_k} \big( \lvert \lambda \rvert \lvert \nabla w \rvert^2 + \lvert \nabla^2 w \rvert^2 + \lvert \nabla \psi \rvert^2 \big) \; \d x \\
 &\qquad \leq C \bigg( \frac{\lvert \lambda \rvert}{r^2} \int_{(4 \cQ) \cap \Omega_k} \lvert w \rvert^2 \; \d x + \frac{1}{r^2} \int_{(4 \cQ) \cap \Omega_k} \big( \lvert \nabla w \rvert^2 + \lvert \psi \rvert^2 \big) \; \d x \bigg).
\end{aligned}
\end{align}
Combining~\eqref{Eq: Hard term in reverse Holder} and~\eqref{Eq: Application of Caccioppoli} one finds analogously to~\eqref{Eq: Cheat estimate} that
\begin{align*}
 \mathrm{II} \leq \frac{C}{\iota^{q / 2}} \bigg\{\bigg( \int_{(32 Q^*) \cap \Omega_k} \lvert T_{\lambda} f \rvert^2 \; \d x \bigg)^{\frac{1}{2}} + \bigg(\int_{(32 Q^*) \cap \Omega_k} \lvert f \rvert^2 \; \d x \bigg)^{\frac{1}{2}} \bigg\}^q.
\end{align*}

\subsection*{Case 1.3: It holds $2 r \leq \sqrt{d} \diam(\Omega)$ and $2 Q^* \cap \partial \Omega_k = \emptyset$}
This case is treated similar as the previous case. The only difference is that there is no need to introduce the cube $\cQ$, thus, by setting $\cQ := 2 Q^*$ in Case~1.2, the proof is literally the same.

\subsection*{Conclusion of the proof of Case 1}
Notice that the family $(T_{\lambda})_{\lambda \in \S_{\theta}}$ is uniformly bounded from $\cL^2_{\sigma} (\Omega_k)$ into $\L^2 (\Omega_k ; \IC^{d + d^2 + 1})$ and that all estimates proven in Case~1 are uniform with respect to $\lambda$. Thus we conclude by Theorem~\ref{Thm: Modified Shen domain} that for all $2 < p < 2 d / (d - 2)$ the family $(T_{\lambda})_{\lambda \in \S_{\theta}}$ satisfies a uniform boundedness estimate from $\cL^p_{\sigma} (\Omega_k)$ into $\L^p (\Omega_k ; \IC^{d + d^2 + 1})$ for all $f \in \C_{\sigma}^{\infty} (\overline{\Omega_k})$ and by density for all $f \in \cL^p_{\sigma} (\Omega_k)$. In particular, this holds true for each of the mappings
\begin{align*}
 T_{\lambda}^1 : f \mapsto \lvert \lambda \rvert u, \qquad T_{\lambda}^2 : f \mapsto \lvert \lambda \rvert^{1 / 2} \nabla u, \qquad \text{and} \qquad T_{\lambda}^3 : f \mapsto \lvert \lambda \rvert^{1 / 2} \phi.
\end{align*}
Now, by the approximation argument carried out in the proof of Theorem~\ref{Thm: H2 regularity on convex domains}, the uniform boundedness of these mappings also follows on the domain $\Omega$.

\subsection*{Case 2: It holds $2d / (d + 2) < p < 2$}
To deduce the second case we argue by duality. Thus, let $q := 2d / (d - 2)$ if $d \geq 3$ and let $q > 2$ if $d = 2$. Let again $\Omega_k$ be a bounded, convex, and smooth domain introduced in Remark~\ref{Rem: Approximating sequence}. Let $F \in \C_c^{\infty} (\Omega_k ; \IC^{d \times d})$ and let $u$ be given by $u := (\lambda + \cB_{\mu , k})^{-1} \divergence(F)$ and let $\phi$ denote the associated pressure. Consider the operator
\begin{align*}
 S_{\lambda} F := \begin{pmatrix} \lvert \lambda \rvert^{1 / 2} u \\ \nabla u \\ \phi \end{pmatrix}.
\end{align*}
Notice that $S_{\lambda}$ extends to a bounded operator from $\L^2 (\Omega_k ; \IC^{d \times d})$ to $\L^2 (\Omega_k ; \IC^{d + d^2 + 1})$ by Propositions~\ref{Prop: Resolvent} and~\ref{Prop: Pressure estimate Neumann} and that its operator norm is bounded by a constant depending merely on $d$, $\mu$, and $\theta$. For such a smooth $F$, the assumptions of Theorem~\ref{Thm: Modified Shen domain} are verified analogously to Case~1. It follows that each of the mappings
\begin{align*}
 S_{\lambda}^1 : F \mapsto \lvert \lambda \rvert^{1 / 2} u, \qquad S_{\lambda}^2 : F \mapsto \nabla u, \qquad \text{and} \qquad S_{\lambda}^3 : F \mapsto \phi
\end{align*}
gives rise to a uniformly bounded family of operators on $\L^r (\Omega_k)$ for each $2 < r < q$. The approximation argument carried out in the proof of Theorem~\ref{Thm: H2 regularity on convex domains}, implies the uniform boundedness of these mappings on the domain $\Omega$. By duality, we conclude from the boundedness properties of the mapping $T_{\lambda}^1$ from Case~1 and from the boundedness properties of $S_{\lambda}^1$ that there exists $C > 0$ such that for all $\lambda \in \S_{\theta}$ and all $f \in \cL^p_{\sigma} (\Omega)$ it holds
\begin{align}
\label{Eq: Estimate for small p}
 \| \lambda (\lambda + B_{\mu})^{-1} f \|_{\cL^p_{\sigma} (\Omega)} + \lvert \lambda \rvert^{1 / 2} \| \nabla (\lambda + B_{\mu})^{-1} f \|_{\L^p (\Omega ; \IC^{d^2})} \leq C \| f \|_{\cL^p_{\sigma} (\Omega)}.
\end{align}
The estimate on $\nabla (\lambda + \cB_{\mu})^{-1} \divergence$ follows from the boundedness of $S_{\lambda}^2$ and duality.
\end{proof}

\begin{remark}
To control the pressure in $\L^p$ for $2d / (d + 2) < p < 2$ is difficult. Intuitively, one would employ~\eqref{Eq: Variational Neumann} to write
\begin{align*}
 \| \phi \|_{\L^p (\Omega)} = \sup_{\substack{g \in \L^{p^{\prime}} (\Omega) \\ \| g \|_{\L^{p^{\prime}}(\Omega) \leq 1}}} \Big\lvert \int_{\Omega} \phi \, \overline{\divergence \nabla \Delta^{-1}_D g} \; \d x \Big\rvert = \sup_{\substack{g \in \L^{p^{\prime}} (\Omega) \\ \| g \|_{\L^{p^{\prime}}(\Omega) \leq 1}}} \Big\lvert \int_{\Omega} \nabla u \cdot \overline{\nabla \nabla \Delta^{-1}_D g} \; \d x \Big\rvert.
\end{align*}
However, one cannot control $\nabla \nabla \Delta^{-1}_D g$ in $\L^{p^{\prime}}$ due to the counterexample in~\cite[Prop.~2]{Fromm}.
\end{remark}

\begin{bibdiv}
\begin{biblist}

\bibitem{Abels}
H.~Abels.
\newblock {\em Boundedness of imaginary powers of the Stokes operator in an infinite layer\/}.
\newblock J.\@ Evol.\@ Equ.~\textbf{2} (2002), no.~4, 439--457.

\bibitem{Abels_free}
H.~Abels.
\newblock {\em The initial-value problem for the Navier-Stokes equations with a free surface in $L^q$-Sobolev spaces\/}.
\newblock Adv.\@ Differential Equations~\textbf{10} (2005), no.~1, 45--64.

\bibitem{Adams_Hedberg}
D.~R.~Adams and L.~I.~Hedberg.
\newblock Function spaces and potential theory. Grundlehren der mathematischen Wissenschaften, vol.~314.
\newblock Springer-Verlag, Berlin, 1996.

\bibitem{Beale}
J.~T.~Beale.
\newblock {\em The initial value problem for the Navier-Stokes equations with a free surface\/}.
\newblock Comm.\@ Pure Appl.\@ Math.~\textbf{34} (1981), no.~3, 359--392.

\bibitem{Blunck_Kunstmann}
S.~Blunck and P.~C.~Kunstmann.
\newblock {\em Calder\'on-Zygmund theory for non-integral operators and the $H^{\infty}$-functional calculus\/}.
\newblock Rev.\@ Mat.\@ Iberoamericana~\textbf{19} (2003), no.~3, 919--942.

\bibitem{Bolkart_Giga_Miura_Suzuki_Tsutsui}
M.~Bolkart, Y.~Giga, T.-H.~Miura, T.~Suzuki, and Y.~Tsutsui.
\newblock {\em On analyticity of the $L^p$-Stokes semigroup for some non-Helmholtz domains\/}.
\newblock Math.\@ Nachr.~\textbf{290} (2017), no.~16, 2524--2546.

\bibitem{Borchers_Sohr}
W.~Borchers and H.~Sohr.
\newblock {\em On the semigroup of the Stokes operator for exterior domains in $L^q$-spaces\/}.
\newblock Math.\@ Z.~\textbf{196} (1987), no.~3, 415--425.

\bibitem{Choe_Kozono}
H.~J.~Choe and H.~Kozono.
\newblock {\em The Stokes problem for Lipschitz domains\/}.
\newblock Indiana Univ.\@ Math.\@ J.~\textbf{51} (2002), no.~5, 1235--1260.

\bibitem{Choudhury_Hussein_Tolksdorf}
A.~P.~Choudhury, A.~Husssein, and P.~Tolksdorf.
\newblock {\em Nematic liquid crystals in Lipschitz domains\/}.
\newblock SIAM J.\@ Math.\@ Anal.~\textbf{50} (2018), no.~4, 4282--4310.

\bibitem{Dauge}
M.~Dauge.
\newblock {\em Stationary Stokes and Navier-Stokes systems on two- or three-dimensional domains with corners. I. Linearized equations\/}.
\newblock SIAM J.\@ Math.\@ Anal.~\textbf{20} (1989), no.~1, 74--97.

\bibitem{Duoandikoetxea}
J.~Duoandikoetxea.
\newblock Fourier Analysis. Graduate Studies in Mathematics, vol.~29.
\newblock American Mathematical Society, Providence, RI, 2001.

\bibitem{Dziuk_Elliott}
G.~Dziuk and C.~M.~Elliott.
\newblock {\em Finite element methods for surface PDEs\/}.
\newblock Acta Numer.~\textbf{22} (2013), 289--396.

\bibitem{Engel_Nagel}
K.-J.~Engel and R.~Nagel.
\newblock One-parameter semigroups for linear evolution equations. Graduate Texts in Mathematics, vol.~194.
\newblock Springer-Verlag, New York, 2000.

\bibitem{Fabes_Kenig_Verchota}
E.~B.~Fabes, C.~E.~Kenig, and G.~C.~Verchota.
\newblock {\em The Dirichlet problem for the Stokes system on Lipschitz domains\/}.
\newblock Duke Math.\@ J.~\textbf{57} (1988), no.~3, 769--793.

\bibitem{Fabes_Mendez_Mitrea}
E.~Fabes, O.~Mendez, and M.~Mitrea.
\newblock {\em Boundary layers on Sobolev-Besov spaces and Poisson's equation for the Laplacian in Lipschitz domains\/}.
\newblock J.\@ Funct.\@ Anal.~\textbf{159} (1998), no.~2, 323--368.

\bibitem{Farwig_Sohr}
R.~Farwig and H.~Sohr.
\newblock {\em Generalized resolvent estimates for the Stokes system in bounded and unbounded domains\/}.

\bibitem{Federer}
H.~Federer.
\newblock Geometric {M}easure {T}heory. Die Grundlehren der mathematischen Wissenschaften, vol.~153.
\newblock Springer, New York, 1969.

\bibitem{Fromm}
S.~J.~Fromm.
\newblock {\em Potential space estimates for Green potentials in convex domains\/}.
\newblock Proc.\@ Amer.\@ Math.\@ Soc.~\textbf{119} (1993), no.~1, 225--233.

\bibitem{Galdi}
G.~P.~Galdi.
\newblock An introduction to the mathematical theory of the Navier-Stokes equations. Steady-state problems. Springer Monographs in Mathematics.
\newblock Springer, New York, 2011.

\bibitem{Geissert_Heck_Hieber}
M.~Geissert, H.~Heck, and M.~Hieber.
\newblock {\em $L^p$-theory of the Navier-Stokes flow in the exterior of a moving or rotating obstacle\/}.
\newblock J.\@ Reine Angew.\@ Math.~\textbf{596} (2006), 45--62.

\bibitem{Geissert_Heck_Hieber_Sawada}
M.~Geissert, H.~Heck, M.~Hieber, and O.~Sawada.
\newblock {\em Weak Neumann implies Stokes\/}.
\newblock J.\@ Reine Angew.\@ Math.~\textbf{669} (2012), 75--100.

\bibitem{Geissert_Hess_Hieber_Schwarz_Stavrakidis}
M.~Geissert, M.~Hess, M.~Hieber, C.~Schwarz, and K.~Stavrakidis.
\newblock {\em Maximal $L^p$-$L^q$-estimates for the Stokes equation: a short proof of Solonnikov's theorem\/}.
\newblock J.\@ Math.\@ Fluid Mech.~\textbf{12} (2010), no.~1, 47--60.

\bibitem{Geng_Shen}
J.~Geng and Z.~Shen.
\newblock {\em The Neumann problem and Helmholtz decomposition in convex domains\/}.
\newblock J.\@ Funct.\@ Anal.~\textbf{259} (2010), no.~8, 2147--2164.

\bibitem{Giaquinta_Modica}
M.~Giaquinta and G.~Modica.
\newblock {\em Nonlinear systems of the type of the stationary Navier-Stokes system\/}.
\newblock J.\@ Reine Angew.\@ Math.\@~\textbf{330} (1982), 173--214.

\bibitem{Giga}
Y.~Giga.
\newblock {\em Analyticity of the semigroup generated by the Stokes operator in $L_r$ spaces\/}.
\newblock Math.\@ Z.~\textbf{178} (1981), no.~3, 297--329.

\bibitem{Gilbarg_Trudinger}
D.~Gilbarg and N.~S.~Trudinger.
\newblock Elliptic partial differential equations of second order. Reprint of the 1998 edition. Classics in Mathematics.
\newblock Springer-Verlag, Berlin, 2001.

\bibitem{Grisvard}
P.~Grisvard.
\newblock {Elliptic problems in nonsmooth domains\/}.
\newblock Monographs and Studies in Mathematics, vol.~24. Pitman, Boston, MA, 1985.

\bibitem{Grubb_Solonnikov}
G.~Grubb and V.~A.~Solonnikov.
\newblock {\em Boundary value problems for the nonstationary Navier-Stokes equations treated by pseudo-differential methods\/}.
\newblock Math.\@ Scand.~\textbf{69} (1991), no.~2, 217--290.

\bibitem{Haase}
M.~Haase.
\newblock The Functional Calculus for Sectorial Operators. Operator Theory: Advances and Applications, vol.~169.
\newblock Birkh{\"a}user, Basel, 2006.

\bibitem{Hormander}
L.~H\"ormander.
\newblock Notions of convexity. Progress in Mathematics, vol.~127.
\newblock Birkh{\"a}user, Boston, 1994.

\bibitem{Kato}
T.~Kato.
\newblock Perturbation theory for linear operators. Die Grundlehren der mathematischen Wissenschaften, vol.~132.
\newblock Springer, Berlin, 1995.

\bibitem{Kellogg_Osborn}
R.~B.~Kellogg and J.~E.~Osborn.
\newblock {\em A regularity result for the Stokes problem in a convex polygon\/}.
\newblock J.\@ Funct.\@ Anal.~\textbf{21} (1976), no.~4, 397--431.

\bibitem{Leykekhman_Vexler}
D.~Leykekhman and B.~Vexler.
\newblock {\em Finite element pointwise results on convex polyhedral domains\/}.
\newblock SIAM J.\@ Numer.\@ Anal.~\textbf{54} (2016), no.~2, 561--587.

\bibitem{Luhar_Sharma_McKeon}
M.~Luhar, A.~S.~Sharma, and B.~J.~McKeon.
\newblock {\em On the structure and origin of pressure fluctuations in wall turbulence: predictions based on the resolvent analysis\/}.
\newblock J.\@ Fluid Mech.~\textbf{751} (2014), 38--70.

\bibitem{Noll_Saal}
A.~Noll and J.~Saal.
\newblock {\em $H^{\infty}$-calculus for the Stokes operator on $L_q$-spaces\/}.
\newblock Math.\@ Z.~\textbf{244} (2003), no.~3, 651--688.

\bibitem{Mazya}
V.~Maz'ya.
\newblock {\em Seventy five (thousand) unsolved problems in analysis and partial differential equations\/}.
\newblock Integral Equations Operator Theory~\textbf{90} (2018), no.~2, Art.~25, 44 pp.

\bibitem{Mazya_Rossmann}
V.~Maz'ya and J.~Rossmann.
\newblock Elliptic equations in polyhedral domains. Mathematical Surveys and Monographs, vol.~162.
\newblock Americal Mathematical Society, Providence, RI, 2010.

\bibitem{Mitrea_Monniaux}
M.~Mitrea and S.~Monniaux.
\newblock {\em The regularity of the Stokes operator and the Fujita-Kato approach to the Navier-Stokes initial value problem in Lipschitz domains\/}.
\newblock J.\@ Funct.\@ Anal.\@ \textbf{254} (2008), no.~6, 1522--1574.

\bibitem{Mitrea_Monniaux_Wright}
M.~Mitrea, S.~Monniaux, and M.~Wright.
\newblock {\em The Stokes operator with Neumann boundary conditions in Lipschitz domains\/}.
\newblock J.\@ Math.\@ Sci.~\textbf{176} (2011), no.~3, 409--457.

\bibitem{Mitrea_Wright}
M.~Mitrea and M.~Wright.
\newblock {\em Boundary value problems for the Stokes system in arbitrary Lipschitz domains}.
\newblock Ast\'erisque \textbf{344} (2012).

\bibitem{Monniaux}
S.~Monniaux.
\newblock {\em Behaviour of the Stokes operators under domain perturbation\/}.
\newblock Sci.\@ China Math.~\textbf{62} (2019), no.~6, 1167--1174.

\bibitem{Saito}
H.~Saito.
\newblock {\em Global solvability of the Navier-Stokes equations with a free surface in the maximal $L_p$-$L_q$ regularity class\/}.
\newblock J.\@ Differential Equations~\textbf{264} (2018), no.~3, 1475--1520.

\bibitem{Shen-Riesz_transform}
Z.~Shen.
\newblock {\em Bounds on {R}iesz transforms on $L^p$-spaces for second order elliptic operators\/}.
\newblock Ann. Inst. Fourier (Grenoble) \textbf{55} (2005), no.~1, 173--197.

\bibitem{Shen}
Z.~Shen.
\newblock {\em Resolvent estimates in $L^p$ for the Stokes operator in Lipschitz domains\/}.
\newblock Arch.\@ Ration.\@ Mech.\@ Anal.~\textbf{205} (2012), no.~2, 395--424.

\bibitem{Sohr}
H.~Sohr.
\newblock The Navier-Stokes equations. An elementary functional analytic approach. Birkh\"auser Advanced Texts: Basler Lehrb\"ucher.
\newblock Birkh\"auser Verlag, Basel, 2001.

\bibitem{Solonnikov}
V.~A.~Solonnikov.
\newblock {\em The solvability of the second initial boundary-value problem for the linear, time-dependent system of Navier--Stokes equations\/}.
\newblock Zap.\@ Nauchn.\@ Sem.\@ LOMI~\textbf{69} (1977), 200--218.

\bibitem{Solonnikov_free}
V.~A.~Solonnikov.
\newblock Lectures on evolution free boundary problems: classical solutions. {\em Mathematical aspects of evolving interfaces (Funchal, 2000), 123–175\/}.
\newblock Lecture Notes in Math., vol.~1812, Springer, Berlin, 2003.

\bibitem{Thomee}
V.~Thom\'ee.
\newblock Galerkin finite element methods for parabolic problems. Second edition.
\newblock Springer Series in Computational Mathematics, vol.~25. Springer-Verlag, Berlin, 2006.

\bibitem{Tolksdorf_Dissertation}
P.~Tolksdorf.
\newblock {\em On the $\mathrm{L}^p$-theory of the Navier-Stokes equations on Lipschitz domains\/}.
\newblock PhD thesis, TU Darmstadt, 2017, \url{http://tuprints.ulb.tu-darmstadt.de/5960/}.

\bibitem{Tolksdorf}
P.~Tolksdorf.
\newblock {\em On the $\L^p$-theory of the Navier-Stokes equations on three-dimensional bounded Lipschitz domains\/}.
\newblock Math.\@ Ann.~\textbf{371} (2018), no.~1-2, 445--460.

\bibitem{Tolksdorf_higher_order}
P.~Tolksdorf.
\newblock {\em $\cR$-sectoriality of higher-order elliptic systems on general bounded domains\/}.
\newblock J.\@ Evol.\@ Equ.~\textbf{18} (2018), no.~2, 323--349.

\bibitem{Tolksdorf_Watanabe}
P.~Tolksdorf and K.~Watanabe.
\newblock {\em The Navier--Stokes equations in exterior Lipschitz domains: $\L^p$-theory\/}.
\newblock Available at \url{arXiv:1906.02713}.

\end{biblist}
\end{bibdiv}

\end{document}